\DeclareMathAlphabet{\mathbbold}{U}{bbold}{m}n
\theoremstyle{plain}\newtheorem{theorem}{Theorem}[section]\newtheorem{corollary}[theorem]{Corollary}\newtheorem{lemma}[theorem]{Lemma}\newtheorem{proposition}[theorem]{Proposition}}\numberwithin{equation}{section}{\theoremstyle{remark}\newtheorem{remark}{\bf Remark}}
\newcommand{\colorref}[2][olive]{\hyperref[#2]{\color{#1}\ref*{#2}}}%
\newcommand{\bref}[1]{\texorpdfstring{\colorref[black]{#1}}{\ref*{#1}}}
\newcommand{\bbm}{\begin{pmatrix}}
\newcommand{\ebm}{\end{pmatrix}}
\DeclareMathAlphabet{\mathbbold}{U}{bbold}{m}{n}
\DeclareMathOperator{\divop}{div}
\title{Solutions with large number of peaks for a slightly supercritical nonlinear equation in dimension three}
\author{Yixing Pu\thanks{School of Mathematics and Statistics, Key Laboratory of Nonlinear Analysis \& Applications (Ministry of Education), Hubei Key Laboratory of Mathematical Sciences, Central China Normal University, Wuhan 430079, China.} \thanks{School of Mathematical Sciences, Key Laboratory of MEA (Ministry of Education), and Shanghai
Key Laboratory of PMMP, East China Normal University, Shanghai 200241, China. \newline \texttt{Email}: yxpu@stu.ecnu.edu.cn}}
\date{}
\begin{document}

\allowdisplaybreaks{}

\maketitle

\begin{abstract}
      We investigate the existence of solutions to the semilinear equation with a slightly supercritical exponent in dimension three,
      \begin{align*}
            -\Delta u=K(x) u^{5+\mu},\quad u>0 ~\text{in}~ \mathbf{B}, \quad u=0 ~\text{on}~ \partial \mathbf{B},
      \end{align*}
      where $\mu >0$, $\mathbf{B}$ is the unit ball in $\mathbb{R}^3$, $K(x)$ is a nonnegative radial function under suitable conditions on $K$. We prove the existence of positive multi-peak solutions for $\mu>0$ small enough. All peaks of our solutions approach the boundary $\partial\mathbf{B}$ as $\mu\rightarrow 0$. Moreover, the number of peaks varies with the parameter $\mu$ as $\mu$ goes to $0^+$. 
      Note that the case $n\geq 4$ was considered by Liu and Peng \cite{LiuPeng2016}.
\end{abstract}

\textit{MSC}: 35J25; 35J61

\textit{Keywords}: peak solutions, supercritical equation, reduction method.

\section{Introduction}

We investigate the existence of the following nonlinear equation:
\begin{align}\label{eq.basic_equation_K_p}
      -\Delta u=K(x) u^{p-1},\quad u>0 ~\text{in}~ \mathbf{B}, \quad u=0 ~\text{on}~ \partial \mathbf{B},
\end{align}
where $p>2$, $\mathbf{B}$ is the unit ball in $\mathbb{R}^n$, $n\geq3$; $K(x)=K(|x|)$ is a radial nonnegative continuous function on $\overline{\mathbf{B}}$.

For $K \equiv 1$, it is well known that \eqref{eq.basic_equation_K_p} possesses solutions  when $2<p<2^*$, where $2^* = \frac{2n}{n-2}$ is the critical Sobolev exponent. If $K$ is radial and nonincreasing in $\mathbf{r}$, the Poho\v{z}aev identity \cite{Pohozaev_1965} implies that there is no solution to problem \eqref{eq.basic_equation_K_p} for $p\geq 2^*$. For the case $2<p<2^*$, all solutions should be radially symmetric based on the moving plane method introduced by Gidas, Ni and Nirenberg \cite{Gidas_Ni_Nirenberg_1979}.

When $K$ is not nonincreasing, the solvability of \eqref{eq.basic_equation_K_p} is quite different. 
A typical case is $K(x)=|x|^{\alpha}$ with $\alpha>0$:

\begin{equation}\label{eq.basic_henon}
      -\Delta u=|x|^\alpha u^{p-1},\quad u>0 ~\text{in}~ \mathbf{B}, \quad u=0 ~\text{on}~\partial{} \mathbf{B},
\end{equation}
which is originally proposed by H\'{e}non \cite{Henon_1973} for rotating stellar structures.
Ni \cite{Ni1982} showed that $2^*+\frac{2\alpha}{n-2}$ is the new critical exponent for the existence of positive radial solutions of \eqref{eq.basic_henon}.
More generally, Ni proved the existence of positive radial solutions of \eqref{eq.basic_equation_K_p} with:
\begin{align*}
      \begin{aligned}
      & K ~\text{is radial nonnegative and H\"{o}lder continuous function},\\ 
      &K(x)=o(|x|^{\alpha}) ~\text{at}~ |x|=0 ~\text{for some}~ \alpha>0,
      \end{aligned}
\end{align*}
and $2<p<2^*+\frac{2\alpha}{n-2}$. 

Since the moving plane method does not work when $K$ is not decreasing in $\mathbf{r}$, it means that the behaviour of solutions may be different. Particularly, the existence of non-radial solutions can exist in this case. Smets, Su and Willem \cite{Smets_Willem_Su2002} obtained the existence of non-radial solutions of \eqref{eq.basic_henon} for $2<p<2^*$ and $\alpha$ large enough. For the slightly subcritical case $p=2^*-\mu$ and $\mu>0$ small,
 Cao and Peng \cite{CaoPeng2003} analyzed the asymptotic behaviour of ground state solution of \eqref{eq.basic_henon} for $\alpha>0$. The ground state blows up at a single point of the boundary $\partial\mathbf{B}$ up to subsequence as $\mu\rightarrow 0$, which deduces that the ground state solution is non-radial for $\mu>0$ small enough.

 Later on, Peng \cite{Peng2006}, Pistoia and Serra \cite{Pistoia_Serra_2007} proved the existence of multiple boundary peak solutions for problem \eqref{eq.basic_henon} in the slightly subcritical case.

For the critical exponent case, i.e., when $p=2^*$ for \eqref{eq.basic_equation_K_p}, 
Wei and Yan \cite{WeiYan2013} considered the existence of non-radial solutions to problem \eqref{eq.basic_equation_K_p}. More precisely, they showed that there are infinitely many non-radial solutions for the problem as $p=2^*$ with assumptions $n \geq 4$, $K(1) > 0$ and $K'(1) > 0$. Later, Hao, Chen and Zhang \cite{Hao2015} proved a similar result for problem \eqref{eq.basic_henon} with critical exponent $p=2^*$ in $\mathbb{R}^3$.

For more studies on H\'{e}non type problems, see \cite{ LiPeng2009,Gladiali_Grossi_Neves_2013,Gladiali_Grossi_2012, ChenZhouNi_2000,Byeon_Wang2006,Byeon_Wang2005,CaoPengYan2009,Hirano_2009,GuoLiLi_2017,GuoLiu_2022} and the references therein.

In this paper, we will focus on the slightly supercritical case $p=2^*+\mu$ and $\mu>0$ small for \eqref{eq.basic_equation_K_p}. In this case, Liu and Peng \cite{LiuPeng2016} proved that for $n\geq 4$, if $K\in C^{1}$ satisfies $K(1) > 0$ and $K'(1) > 0$, then there exists $\mu_0>0$ such that for $\mu\in(0,\mu_0)$, problem \eqref{eq.basic_equation_K_p} has a solution $u_{\mu}$ whose number of local maximal points is of the order $\mu^{-\frac{1}{n-1}}$ as $\mu \rightarrow 0$. Due to technical reason, it is difficult to extent Liu-Peng's approach to dimension three. A natural question is whether the result holds in $\mathbb{R}^3$. Namely, we consider the existence of non-radial solutions to \eqref{eq.basic_equation_K_p} in the slightly supercritical case when $n=3$:
\begin{align}\label{eq.basic_equation_n_3}
      -\Delta u=K(x) u^{5+\mu},\quad u>0 ~\text{in}~ \mathbf{B}\subset \mathbb{R}^3, \quad u=0 ~\text{on}~ \partial \mathbf{B},
\end{align}
where $K(x)=K(|x|)$ satisfies:
\begin{equation}\label{eq.K}\makeatletter\tagsleft@true\let\veqno\@@leqno\makeatother
      \begin{aligned}
            &K\in C^0\left(\left[0,1\right]\right) \cap C^{1,1}\left(\left(0,1\right]\right), \quad K(1)>0,\quad K'(1)>0.
      \end{aligned}\tag{H}
\end{equation}
Without loss of generality, we can assume that $K(1)=1$.

Our main result is as follows.
\begin{theorem}\label{th.1}
      Let $K$ satisfies \eqref{eq.K}. Then there exists $\mu_0>0$ such that for any $\mu \in (0,\mu_0)$, problem \eqref{eq.basic_equation_n_3}
      has a solution $u_\mu$ whose number of local strict maximal points is of the order $\mu^{-\frac{1}{2}}$
      as $\mu \rightarrow 0$.
\end{theorem}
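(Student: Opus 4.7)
The plan is to apply a Lyapunov--Schmidt finite-dimensional reduction in the spirit of Liu--Peng \cite{LiuPeng2016}, carried out in the more delicate $n=3$ setting. The candidate approximate solution is a sum of $k$ Dirichlet-projected standard bubbles placed symmetrically near $\partial\mathbf{B}$,
\begin{equation*}
W_{\lambda,d}(x) = \sum_{j=1}^{k} PU_{\lambda,\xi_j}, \qquad \xi_j = (1-d)\bigl(\cos\tfrac{2\pi j}{k},\, \sin\tfrac{2\pi j}{k},\, 0\bigr),
\end{equation*}
where $U_{\lambda,\xi}(x)=3^{1/4}\lambda^{1/2}(1+\lambda^{2}|x-\xi|^{2})^{-1/2}$ is the Aubin--Talenti bubble for $-\Delta U = U^{5}$ on $\mathbb{R}^{3}$ and $PU_{\lambda,\xi}\in H^{1}_{0}(\mathbf{B})$ denotes its Dirichlet projection. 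We aim for $k\sim\mu^{-1/2}$, leaving $(\lambda,d)$ to be determined. To kill most of the approximate kernel from the outset, I would work in the closed subspace of $H^{1}_{0}(\mathbf{B})$ of functions invariant under the dihedral group $D_{k}$ generated by rotations of angle $2\pi/k$ about the $x_{3}$-axis together with the reflection $x_{3}\mapsto -x_{3}$.

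The next step is the standard reduction. Writing $u=W_{\lambda,d}+\phi$ with $\phi$ orthogonal in $H^{1}_{0}$ to the two-parameter pseudo-kernel spanned by $\sum_j \partial_{\lambda}PU_{\lambda,\xi_j}$ and $\sum_j \partial_{d}PU_{\lambda,\xi_j}$, the equation splits into
\begin{equation*}
L_{\mu}\phi = N(\phi)+R, \qquad R:=-\Delta W_{\lambda,d}-K(x)\,W_{\lambda,d}^{5+\mu},
\end{equation*}
with $L_\mu$ the linearization at $W_{\lambda,d}$ and $N$ the nonlinear-in-$\phi$ remainder. Uniform invertibility of $L_{\mu}$ on the symmetric orthogonal complement, uniform in $(\lambda,d,\mu)$ in the target regime, follows from the nondegeneracy of the bubble via a standard blow-up/contradiction argument. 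Quantitative weighted estimates on $R$ (typically in $L^{6/5}$ or weighted $L^{\infty}$) then feed a contraction mapping, yielding a unique $\phi=\phi_{\mu}(\lambda,d)$ small in $H^{1}_{0}$.

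Substituting, the PDE reduces to finding critical points of $\tilde{I}_\mu(\lambda,d):=I_\mu(W_{\lambda,d}+\phi_\mu)$, where $I_\mu(u)=\tfrac{1}{2}\int|\nabla u|^{2}-\tfrac{1}{6+\mu}\int K(x)u_+^{6+\mu}$. The core of the argument is an asymptotic expansion
\begin{equation*}
\tilde I_\mu(\lambda,d) = k A_{0} + k\,\Phi_{\mu}(\lambda,d,k) + \text{l.o.t.},
\end{equation*}
where $A_0$ is the energy of a single bubble and $\Phi_\mu$ assembles four competing contributions: a boundary-projection correction of order $(\lambda d)^{-1}$ coming from $\int(U-PU)U^{5}$; a $K$-variation term of order $-K'(1)\,d$ reflecting $K(1-d)=1-K'(1)d+O(d^{2})$; a slight-supercriticality penalty of order $\mu\log\lambda$ from $U^{5+\mu}$ versus $U^{5}$; and a pairwise interaction term of order $k\lambda^{-1}\log k$ obtained by summing the 3D two-bubble interaction $(\lambda|\xi_i-\xi_j|)^{-1}$ around the $k$-gon. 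Choosing $k_\mu\in\mathbb{Z}$ of order $\mu^{-1/2}$ and optimizing in $(\lambda,d)$ by a Brouwer-degree (or min--max) argument on a suitable rectangle produces a critical pair $(\lambda_\mu,d_\mu)$. Positivity and the count of $k_\mu$ local strict maxima of $u_\mu$ then follow from the smallness of $\phi_\mu$ in $L^{\infty}$ combined with the separation of the peaks $\xi_j$.

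The principal obstacle, and the reason \cite{LiuPeng2016} did not handle $n=3$, is the sharp asymptotic analysis of the interaction and projection terms. In dimension three the bubble decays only like $|x|^{-1}$, so pairwise interactions are borderline integrable and the summation over $j$ carries an explicit $\log k$ factor; consequently, the subleading contributions in each of the four pieces of $\Phi_\mu$ are of comparable order, and the cancellations between the $K'(1)$ term and the projection correction are far more delicate than in $n\geq 4$. Tracking constants and logarithmic factors with enough precision to detect a nondegenerate critical point of $\tilde I_\mu$ in the narrow window where the four contributions balance, and showing that this critical point is not destroyed by the perturbation coming from $\phi_\mu$ and the Taylor remainder of $K$, is the central analytic task of the paper.
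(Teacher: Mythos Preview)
Your overall architecture---Lyapunov--Schmidt reduction in a dihedrally symmetric subspace, two reduced parameters, a degree argument on a rectangle---is exactly the paper's. The paper also writes the approximate solution as a sum of Dirichlet-projected bubbles, made explicit via Kelvin reflection as $W=\sum_i(U_i-U_{i^*})$ so that $W=0$ on $\partial\mathbf{B}$ exactly, and carries out the reduction in the weighted $L^\infty$ norms $\|\cdot\|_\rho$ borrowed from \cite{Hao2015} rather than in $H^1$ or $L^{6/5}$.

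The substantive gap is in your reduced-energy expansion. You list a self boundary-projection term of order $(\lambda d)^{-1}$ from $\int(U-PU)U^5$ and, separately, a pairwise interaction of order $k\lambda^{-1}\log k$ from summing $(\lambda|\xi_i-\xi_j|)^{-1}$. In your scaling the latter is larger than every other contribution by a factor $|\log\mu|$, so no critical point exists at $k\sim\mu^{-1/2}$ with that accounting. What is missing are the cross-projection terms $\int U_i^5 H_j$ for $j\neq i$: since $\int\nabla PU_i\cdot\nabla PU_j=\int U_i^5(U_j-H_j)$ and $H_j\approx U_{j^*}$, the relevant interaction is the \emph{difference} $(\lambda|\xi_i-\xi_j|)^{-1}-(\lambda|\xi_i-\xi_j^*|)^{-1}$, which decays fast enough in $j$ that the sum is convergent with no logarithm. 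In the paper's scaling $\varepsilon=\lambda^{-1}\mu$ (with $\lambda=O(1)$), $1-r=\sigma\mu^{1/2}$, $k=\lfloor\mu^{-1/2}\rfloor$, this combined projection--interaction contribution has the finite limit
\[
L(\sigma)=\sum_{j\in\mathbb{Z}\setminus\{0\}}\Bigl(\frac{1}{|j\pi|}-\frac{1}{\sqrt{(j\pi)^2+\sigma^2}}\Bigr),
\]
and the reduced system is simply $L(\sigma)=0$ together with $K'(1)A_1\lambda=A_2L'(\sigma)$, solved at a unique $(\lambda^*,\sigma^*)$. The supercritical penalty $\mu\log(\varepsilon^{-1})=O(\mu|\ln\mu|)$ is strictly lower order and never appears in the leading balance. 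This bubble--reflection cancellation is precisely the mechanism that makes $n=3$ tractable and is why a direct transplant of the Liu--Peng expansion fails; recognising it (and then controlling all remainders sharply enough in the weighted norms to see it survive) is the heart of the paper.
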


As a special case, consider $K\left(x\right)=\left|x\right|^{\alpha}$ with $\alpha>0$, then we have 
\begin{corollary} Given $\alpha>0$, there exists $\mu_0>0$ such that for any $\mu \in (0,\mu_0)$, the H\'{e}non equation
      \begin{equation}\label{eq.basic_henon_n_3}
            -\Delta u=|x|^\alpha u^{5+\mu},\quad u>0 ~\text{in}~ \mathbf{B}\subset \mathbb{R}^3, \quad u=0 ~\text{on}~\partial \mathbf{B},
      \end{equation}
      where $\mathbf{B}$ is a unit ball in $\mathbb{R}^3$, has a solution $u_\mu$ whose number of local maximal points is of the order $\mu^{-\frac{1}{2}}$ as $\mu \rightarrow 0$.
\end{corollary}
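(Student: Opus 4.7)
The plan is a Lyapunov--Schmidt finite-dimensional reduction in the spirit of Wei--Yan and Liu--Peng, adapted to the specific difficulties of dimension three. Exploiting that $K$ is radial, I would work in the subspace of $H_0^1(\mathbf{B})$ consisting of functions that are invariant under the cyclic group $\mathbb{Z}_k$ of rotations about the $x_3$-axis and even in $x_3$, so that only one concentration point needs to be tracked. The approximate solution is
\[
W_{r,\delta}(x)=\sum_{j=1}^{k}PU_{\delta,\xi_j}(x),\qquad \xi_j=r\Bigl(\cos\tfrac{2\pi j}{k},\,\sin\tfrac{2\pi j}{k},\,0\Bigr),
\]
where $U_{\delta,\xi}(x)=3^{1/4}\delta^{1/2}/(\delta^2+|x-\xi|^2)^{1/2}$ is the three-dimensional Aubin--Talenti bubble and $P$ is the Dirichlet projection onto $\mathbf{B}$. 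The integer $k$ and the parameters $r\in(0,1)$ and $\delta>0$ all depend on $\mu$, and the target regime will turn out to be $k\asymp\mu^{-1/2}$, $1-r\asymp 1/k$, $\delta\asymp k^{-2}$.

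Writing $u=W_{r,\delta}+\phi$ with $\phi$ orthogonal in $H_0^1$ to the symmetric kernel spanned by $\partial_\delta PU_{\delta,\xi_j}$ and the tangential variations of $\xi_j$, a standard contraction argument on the linearization at $W_{r,\delta}$ yields a unique small correction $\phi=\phi_{\mu,r,\delta}$ controlled in a weighted norm adapted to the sum of bubbles. Substituting back reduces the problem to finding critical points of
\[
F_\mu(r,\delta)=I_\mu\bigl(W_{r,\delta}+\phi_{\mu,r,\delta}\bigr),\qquad I_\mu(u)=\tfrac12\int_{\mathbf{B}}|\nabla u|^2-\tfrac{1}{6+\mu}\int_{\mathbf{B}}K(x)|u|^{6+\mu},
\]
over a compact region of $(r,\delta)$-space.

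A careful expansion of $F_\mu$, using $K(1)=1$ and $K(|x|)=1+K'(1)(|x|-1)+O((|x|-1)^2)$ near $\partial\mathbf{B}$, produces a leading structure schematically of the form
\[
F_\mu(r,\delta)=k\Bigl[A_0+A_1\mu\log\tfrac{1}{\delta}-A_2\tfrac{\delta}{1-r}-A_3K'(1)\delta^{1/2}(1-r)+A_4\,k\,\delta\log k+o(\cdot)\Bigr],
\]
in which $\delta/(1-r)$ arises from the projection defect $PU_{\delta,\xi_j}-U_{\delta,\xi_j}$, the $k\delta\log k$ term encodes the three-dimensional bubble--bubble interaction $\sum_{j\neq 1}U_{\delta,\xi_1}(\xi_j)\sim\delta^{1/2}k\log k$, the $K'(1)$ term is the boundary push from the nonconstant weight, and the $\mu\log\frac{1}{\delta}$ term is generated by the supercritical correction $\mu\int K W^6\log W$. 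Balancing the $\delta$-dependent contributions forces $k^2\mu\asymp 1$, i.e. $k\asymp\mu^{-1/2}$, and produces a nondegenerate interior critical point $(r_\mu,\delta_\mu)$ of $F_\mu$; by the reduction this yields a genuine solution of \eqref{eq.basic_equation_n_3} with $k$ local strict maxima.

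The hard part is genuinely three-dimensional. When $n\geq 4$ the defect $PU-U$ is of strictly smaller order than $U$ near the bubble center, so the various contributions to the reduced energy separate cleanly; when $n=3$ they are of comparable size as soon as the bubble is at distance comparable to a power of $\delta$ from $\partial\mathbf{B}$, which is precisely the regime here. Consequently one must establish sharper pointwise and weighted-$L^p$ estimates on $W_{r,\delta}-\sum U_{\delta,\xi_j}$ and on the nonlinear remainder, identify the coercive boundary contribution from $K'(1)>0$, the three-dimensional bubble interaction carrying the extra factor $\log k$, and the supercritical correction all at the same order in the expansion, and prove a uniform invertibility of the linearization on the symmetric orthogonal complement that is uniform in $k\asymp\mu^{-1/2}$ as $\mu\to 0$. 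These are exactly the points at which the argument departs from the higher-dimensional proof of Liu--Peng and form the technical core of the theorem.
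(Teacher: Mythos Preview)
In the paper this corollary carries no separate proof: it is simply the special case $K(x)=|x|^\alpha$ of Theorem~\ref{th.1}, and one only checks that $K(1)=1>0$ and $K'(1)=\alpha>0$ so that hypothesis~\eqref{eq.K} is satisfied. Your proposal is therefore really an outline of the proof of the main theorem.

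Within that outline there is a genuine gap. Your reduced energy lists the projection defect $A_2\,\delta/(1-r)$ and the bubble--bubble interaction $A_4\,k\delta\log k$ as separate contributions, and you present the factor $\log k$ as a three-dimensional feature to be ``identified at the same order''. But that logarithmic divergence is precisely the obstruction that prevents the Liu--Peng argument from working when $n=3$, and it does \emph{not} survive in the correct expansion. The paper (following Hao--Chen--Zhang) takes the explicit ansatz $W=\sum_i(U_i-U_{i^*})$ with Kelvin-type reflected bubbles $U_{i^*}$, and works in the regime $1-r=\sigma\mu^{1/2}\sim 1/k$ so that the self-reflection and the interactions with neighbouring bubbles and their reflections are all of the same size. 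Grouped this way, the would-be $\log k$ cancels and one is left with the convergent series
\[
L(\sigma)=\sum_{j\in\mathbb{Z}\setminus\{0\}}\Bigl(\frac{1}{|j\pi|}-\frac{1}{[(j\pi)^2+\sigma^2]^{1/2}}\Bigr).
\]
The reduced equations become $L(\sigma)=0$ and $K'(1)A_1\lambda=A_2L'(\sigma)$, whose unique root $(\lambda^*,\sigma^*)$ furnishes the solution; the supercritical term $\mu\log(1/\delta)$ is of lower order and plays no role at leading level. With your decomposition, by contrast, $A_4\,k\delta\log k\sim\mu^{1/2}|\log\mu|$ strictly dominates both $A_2\,\delta/(1-r)\sim\mu^{1/2}$ and $A_1\,\mu\log(1/\delta)\sim\mu|\log\mu|$, so no nondegenerate critical point can come out of the expansion as written. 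The missing idea is to recognise this cancellation and to parametrise from the start by the dimensionless quantities $\lambda=\mu/\varepsilon$ and $\sigma=(1-r)\mu^{-1/2}$.
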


We will use the finite-dimensional reduction method and prove the existence of multi-peak solutions which blow up on the boundary. Comparing to Liu-Peng's construction \cite{LiuPeng2016}, we will choose different ansatz by adapting some ideas of \cite{Hao2015}.

This paper has the following structure. In Section \ref{sec.2}, we construct the ansatz. 
In Section \ref{sec.new3}, we analyze the linearized problem near the ansatz. In Section \ref{sec.3}, we consider the associated nonlinear problem and prove our main result. The energy estimates and some basic estimates are postponed in the appendices.

\section{The Multi-peak Ansatz}\label{sec.2}
We will construct the $k$-peak ansatz for problem \eqref{eq.basic_equation_n_3} with 
\begin{align}\label{eq.def_k_mu}
      k = \big\lfloor\mu^{-\frac{1}{2}}\big\rfloor, \quad ~\text{for}~ \mu>0 ~\text{small enough},
\end{align}
 where $\left\lfloor x \right\rfloor$ represents the greatest integer less than or equal to $x\in \mathbb{R}$.


\subsection{The $k$-peak ansatz}\label{subsec.Ansatz_def}
\ \par
Denote $\Phi(x):=\frac{3^\frac{1}{4}}{{\left(1+|x|^2\right)}^\frac{1}{2}}$ which is the unique positive solution of
\begin{align}\label{eq.peak_equation}
      -\Delta\Phi=\Phi^5 \quad \text{in}~ \mathbb{R}^3,
\end{align}
up to the translation and scaling.
Let 
\begin{align}\label{eq.m_def}
      \mathbf{m}_i := \Bigl(\cos\frac{2\pi i}{k},\sin\frac{2\pi i}{k},0\Bigr),\quad \text{for}~ i = 0,1,\cdots,k-1, 
\end{align}
which means the vertices of a regular $k$-sided polygon on $\mathbb{R}^2 \times \{0\}$.

Denote $(\lambda^*,\sigma^*)$ as unique root of equation \eqref{eq.L_L_prime_algebraic_system} below. For any 
\begin{align}\label{eq.independent_parameters}
      \lambda\in\Bigl[\frac{1}{2}\lambda^*,2\lambda^*\Bigr], \quad \sigma\in\Bigl[\frac{1}{2}\sigma^*,2\sigma^*\Bigr],
\end{align}
we define
\begin{align}\label{eq.dependent_parameters}
      \varepsilon=\lambda^{-1}\mu, \quad r= 1-\sigma\mu^\frac{1}{2},
\end{align}
and for $i=0,1,\cdots,k-1$,
\begin{align}\label{eq.U_i_U_i_star}
            U_i(x) := \frac{K^{-\frac{1}{4}}(r)}{\varepsilon^\frac{1}{2}}\Phi \Bigl(\frac{x-r\mathbf{m}_i}{\varepsilon}\Bigr), \quad U_{i^*}(x) := \frac{K^{-\frac{1}{4}}(r)}{\varepsilon^\frac{1}{2}}\Phi\Bigl(\frac{rx-\mathbf{m}_i}{\varepsilon}\Bigr),
\end{align}
which satisfy
\begin{align*}
      -\Delta U_i=K(r)U_i^5 ~\text{in}~ \mathbb{R}^3, \quad -\Delta U_{i^*}=r^2K(r) U_{i^*}^5 ~\text{in}~ \mathbb{R}^3.
\end{align*}

Our $k$-peak ansatz to \eqref{eq.basic_equation_n_3} is given by
\begin{equation}\label{eq.W_def}
      W:=\sum_{i=0}^{k-1}(U_i-U_{i^*}).
\end{equation}
Note that the above choice ensures that 
\begin{align*}
      U_i-U_{i^*}=0 ~\text{on}~ \partial \mathbf{B}, ~\text{hence}~ W=0 ~\text{on}~ \partial \mathbf{B}.
\end{align*}


For simplicity, we omit in general the dependence on $\lambda,\sigma,\mu$.

\subsection{Idea of the construction of a $k$-peak solution}
\ \par
We rewrite $\mathbb{R}^3$ as $\mathbb{C}\times \mathbb{R}$ and consider the following functional space. 
Let $k\in \mathbb{N}, k \geq 2$ and
\begin{align*}
      \begin{aligned}
           \mathcal{H}_k := \Big\{& f \in H_0^1\left(\mathbf{B}\right): ~ f=0 ~\text{on}~ \partial \mathbf{B}, ~\text{and for any}~ z\in \mathbb{C}, ~ y \in \mathbb{R},
            \\
            & f(z,y) = f(-z,y) = f(z e ^{\mathbf{i} \frac{2\pi}{k}},y)=f(z,-y) \Big\},
      \end{aligned}
\end{align*}
where $\mathbf{i}$ denotes the imaginary unit.

We will seek a solution to \eqref{eq.basic_equation_n_3} by $v= W + \varphi$ .
More precisely, we will consider the nonlinear problem:
\begin{align}\label{eq.nonlinear_varphi_equivalent}
     \quad \mathcal{L}\varphi= F+N(\varphi),
\end{align}
where $\varphi\in \mathcal{H}_k$, the linear operator $\mathcal{L}$ is
\begin{align}
      \mathcal{L}\varphi := -\Delta\varphi-(5+\mu)K(x) W^{4+\mu} \varphi,
\end{align}
and
\begin{align}\label{eq.linear_error}
      \begin{aligned}
            F &:= \Delta W+K\left(x\right) W^{5+\mu},\\
            N(\varphi) &:= K\left(x\right)\left[\left|W+\varphi\right|^{5+\mu}-W^{5+\mu}-\left(5+\mu\right)W^{4+\mu}\varphi\right].
      \end{aligned}
\end{align}
It is hard to solve \eqref{eq.nonlinear_varphi_equivalent} directly because the kernel of $\mathcal{L}$ is nontrivial. In fact, we will first consider the constraint problem:
\begin{equation}\label{eq.c_1_c_2_equation}
      (c_1,c_2,\varphi)\in \mathbb{R}\times \mathbb{R}\times \mathcal{H}_{k,0}, \quad \mathcal{L}\varphi= F+N(\varphi)+c_1\Delta W_\lambda +c_2\Delta W_\sigma ,
\end{equation}
where $W_\lambda = \partial_\lambda W$, $W_\sigma = \partial_\sigma W$ and $\mathcal{H}_{k,0}$ is the closed linear subspace in $\mathcal{H}_k$ defined by:
\begin{align*}
            \mathcal{H}_{k,0}  := \left\{f\in \mathcal{H}_k: ~ \left< f,\Delta W_{\lambda} \right> =\langle f,\Delta W_{\sigma}\rangle =0 \right\}.
\end{align*}
Here and in the following, we use the notations $f_\lambda = \partial_\lambda f$, $f_\sigma = \partial_\sigma f$ 
and $ \left< f,g \right>$ means the inner product in $L^2(\mathbf{B})$. i.e.
\begin{align*}
      \left< f,g \right>=\int_{\mathbf{B}}f g dx.
\end{align*} 
In fact, we will choose suitable $\lambda(\mu)$ and $\sigma(\mu)$ such that $c_1=c_2=0$ and for the solution $\varphi$ to \eqref{eq.c_1_c_2_equation}, hence $\varphi$ is a solution to \eqref{eq.nonlinear_varphi_equivalent}. 


\subsection{More understanding of the ansatz}\label{subsec.notations}

Based on the rotational symmetry, we will decompose $\mathbf{B}$ as follows.
\begin{align}
      \Omega_i:=\Bigl\{x\in \mathbf{B}:x\cdot \mathbf{m}_i=\max_{0\leq j\leq k-1} (x \cdot \mathbf{m}_j)\Bigr\}, ~\text{for}~ i=0,1,\cdots,k-1.
\end{align}
In this paper $x \cdot y$ means the Euclidean inner product in $\mathbb{R}$. Due to the rotational periodicity in $\mathcal{H}_k$, we will often focus on $\Omega_0$.
To understand more precisely of $W$, we introduce the following notation:
\begin{equation}
      R:= W-U_0 = \sum_{i=1}^{k-1}\left(U_i-U_{i^*}\right)-U_{0^*}.
\end{equation}
We rewrite $R$ as
\begin{equation}
      R(x) = \Phi(0)K^{-\frac{1}{4}}(r)E(x),
\end{equation}
where
\begin{align}
      \quad E(x):= \frac{\lambda}{\mu^\frac{1}{2}}\left(\sum_{i=1}^{k-1}\frac{1}{d_i (x)}-\sum_{i=0}^{k-1}\frac{1}{d_{i^*}(x)}\right),
\end{align}
and
\begin{align*}
      d_i(x):={ \sqrt{1+ \frac{|x-r\mathbf{m}_i|^2}{\varepsilon^2}}}, \quad  d_{i^*}(x) :={ \sqrt{1+\frac{|rx-\mathbf{m}_i|^2}{\varepsilon^2}}}.
\end{align*}

For $\rho>0$, denote $\Vert \cdot \Vert_\rho $ as the weighted norm: 
\begin{align*}
      \Vert f\Vert_\rho := \sup_{x \in \mathbf{B}}\frac{|f(x)|}{\omega_\rho(x)},
\end{align*}
where the weight $\omega_\rho(x)$ is given by
\begin{align}
      \omega_\rho(x):= {\sum_{i=0}^{k-1}\frac{1}{d_i^\rho(x)}}.
\end{align}
For convenience, we also use $\omega_\rho^*(x)$ as follows.
\begin{align}
      \omega_\rho^*(x):= {\sum_{i=0}^{k-1}\frac{1}{d_{i^*}^\rho(x)}}.
\end{align}
.

Similar to \cite[Lemma 2.3]{Hao2015}, we get the estimates of $E(r\mathbf{m}_0)$ and $\nabla E(r\mathbf{m}_0)$ by direct calculation.
\begin{lemma}
      As $\mu$ goes to $0^+$, there hold
      \begin{align}\label{eq.E_x_0_and_nabla_E_x_0}
            \begin{aligned}
            E(r\mathbf{m}_0)&=\frac{1}{2}L(\sigma)+O\bigl(\mu^\frac{1}{2}\bigr),\\
            \mathbf{m}_0\cdot \nabla E(r\mathbf{m}_0) &= \mu^{-\frac{1}{2}}\Bigl( -\frac{1}{4}L'(\sigma)+O\bigl(\mu^\frac{1}{2}\bigr) \Bigr),
            \end{aligned}
      \end{align}
      where 
      \begin{align}\label{eq.L_L_prime_def}
            L(\sigma):= \sum_{j\in \mathbb{Z}\backslash \{0\}}\Biggl(\frac{1}{\left|j\pi\right|}-\frac{1}{{\bigl[{\left(j\pi\right)}^2+\sigma^2\bigr]}^{\frac{1}{2}}}\Biggr),
            \quad \text{hence}~ L'(\sigma)=\sum_{j\in \mathbb{Z}}\frac{\sigma}{{\bigl[\sigma^2+{\left(j\pi\right)}^2\bigr]}^\frac{3}{2}}.
      \end{align}
\end{lemma}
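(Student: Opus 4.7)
My plan is to establish both estimates by direct asymptotic expansion, following the template of \cite[Lemma 2.3]{Hao2015}. Substituting $x = r\mathbf{m}_0$ into the definitions of $d_i$ and $d_{i^*}$, the regular $k$-gon geometry yields $|r\mathbf{m}_0 - r\mathbf{m}_i|^2 = 4r^2\sin^2(\pi i/k)$ and, by expanding $|r^2\mathbf{m}_0 - \mathbf{m}_i|^2$, the identity $|r^2\mathbf{m}_0 - \mathbf{m}_i|^2 = (1-r^2)^2 + 4r^2\sin^2(\pi i/k)$ for $i \neq 0$ (with only the first term when $i = 0$). Inserting $\varepsilon = \mu/\lambda$, $r = 1 - \sigma\mu^{1/2}$, and $k = \lfloor \mu^{-1/2}\rfloor$, both $d_i(r\mathbf{m}_0)$ and $d_{i^*}(r\mathbf{m}_0)$ are of size $O(\mu^{-1/2})$. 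The uniform Taylor estimate $\sin(i\pi/k)/\mu^{1/2} = i\pi + O(\mu^{1/2} i^3)$ for $1 \leq i \leq k/2$, combined with the lower bound coming from $\sin x \geq (2/\pi) x$ on $[0,\pi/2]$, bridges the finite sum with the integer-indexed series defining $L(\sigma)$.

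For the value estimate, I split
\[ E(r\mathbf{m}_0) = \frac{\lambda}{\mu^{1/2}}\sum_{i=1}^{k-1}\Bigl(\frac{1}{d_i(r\mathbf{m}_0)} - \frac{1}{d_{i^*}(r\mathbf{m}_0)}\Bigr) - \frac{\lambda}{\mu^{1/2}\, d_{0^*}(r\mathbf{m}_0)}, \]
and expand the brackets using $1/\sqrt{1+A} = 1/\sqrt{A} + O(A^{-3/2})$ as $A \to \infty$. Setting $T_i := \sin(i\pi/k)/\mu^{1/2}$, the leading part of each bracket equals $\frac{\mu^{1/2}}{2\lambda}\bigl(\frac{1}{r T_i} - \frac{1}{\sqrt{r^2 T_i^2 + \sigma^2}}\bigr)$; after multiplying by $\lambda/\mu^{1/2}$, invoking the symmetry $T_i = T_{k-i}$, and applying dominated convergence with the $\mu$-uniform summable majorant $\sigma^2/i^3$, the sum converges to $\frac{1}{2}L(\sigma) + O(\mu^{1/2})$.

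For the gradient, I differentiate $\nabla(1/d_i) = -(x - r\mathbf{m}_i)/(d_i^3 \varepsilon^2)$ and $\nabla(1/d_{i^*}) = -r(rx - \mathbf{m}_i)/(d_{i^*}^3 \varepsilon^2)$. Projecting onto $\mathbf{m}_0$ at $x = r\mathbf{m}_0$ and using $r^2 - \cos(2\pi i/k) = -(1-r^2) + 2\sin^2(\pi i/k)$, the chain-rule factor $1/\varepsilon^2 = \lambda^2/\mu^2$ produces the prefactor $\mu^{-1/2}$. The dominant contribution comes from the $-(1-r^2) \sim -2\sigma\mu^{1/2}$ piece inside the $d_{i^*}$ gradients; after multiplying by $\lambda/\mu^{1/2}$ and summing, this yields $-\frac{\sigma}{4\mu^{1/2}}\sum_{i \in \mathbb{Z}\setminus\{0\}}[(i\pi)^2 + \sigma^2]^{-3/2}$, while the separated gradient $\mathbf{m}_0\cdot\nabla(1/d_{0^*})$ supplies exactly the missing $j = 0$ summand $1/(4\mu^{1/2}\sigma^2)$. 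Together they form $-\frac{1}{4\mu^{1/2}} L'(\sigma)$, giving the claimed leading coefficient $-1/4$.

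The main technical obstacle is uniform-in-$\mu$ control of the series as $k \to \infty$. One must justify exchanging sum and limit, which relies on the $O(1/i^3)$ majorant and on the algebraic cancellation of the seemingly divergent tail $\sum_i 1/T_i$ between the $1/d_i$ and $1/d_{i^*}$ series. A second delicate point is recognizing that the boundary-reflection term $1/d_{0^*}$ supplies precisely the $j = 0$ contribution to $L'(\sigma)$, which pins down the coefficient $-1/4$ in the second line of \eqref{eq.E_x_0_and_nabla_E_x_0}.
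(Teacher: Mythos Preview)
Your approach—direct asymptotic expansion of the finite sums using the regular $k$-gon geometry, following \cite[Lemma~2.3]{Hao2015}—is exactly what the paper does, since it gives no independent argument beyond that citation and ``direct calculation.'' One small point: in the value estimate you should also record (as you correctly do for the gradient) that the separated term $-\lambda/(\mu^{1/2}d_{0^*}(r\mathbf{m}_0))\to -1/(2\sigma)$ supplies the $j=0$ contribution, which is implicit in the paper's writing $L'(\sigma)=\sum_{j\in\mathbb{Z}}\sigma\,[\sigma^2+(j\pi)^2]^{-3/2}$ with the $j=0$ summand included.
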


In this paper, $O(f)$ means a generic quantity satisfying $|O(f)|\leqslant C |f|$, for some constant $C$ which is independent of $\lambda,\sigma,\mu$. 

Finally, we denote
\begin{align}\label{eq.A_A_1_A_2}
           A_1:=\frac{1}{6}\int_{\mathbb{R}^3}\Phi^6(x)dx,\quad A_2:=\frac{1}{4}\int_{\mathbb{R}^3}\Phi(0)\Phi^5(x)dx,
      \end{align}
      where $\Phi=\frac{3^\frac{1}{4}}{{\left(1+|x|^2\right)}^\frac{1}{2}}$ is the unique positive solution of \eqref{eq.peak_equation}. Consider the system of $(\lambda,\sigma)$:
      \begin{equation}\label{eq.L_L_prime_algebraic_system}
            L(\sigma)=0, \quad K'(1)A_1 \lambda=A_2 L'(\sigma),
      \end{equation}
      where $L$ is defined in \eqref{eq.L_L_prime_def}.
According to Section 2.1 in \cite{Hao2015},  \eqref{eq.L_L_prime_algebraic_system} possesses a unique positive root which is denoted by $(\lambda^*,\sigma^*)$.

\subsection{Main estimates of $R$}


\begin{lemma}
      As $\mu$ goes to $0$, $R$ satisfies
      \begin{align}\label{eq.R_estimates_parameters}
            |R(x)|+ |R_{\lambda}(x)|+ |R_{\sigma}(x)| \leq C \left[\frac{1}{\mu^{\frac{1}{2}}d_{0^*}(x)}+
            \sum_{i=1}^{k-1}\frac{1}{\mu d_{i^*}^2(x)}\right], \quad \text{for}~ x\in \overline{\Omega}_0,
      \end{align}
      and
      \begin{align}\label{eq.R_estimates_x}
            \mu^\frac{1}{2}\Vert \nabla R\Vert_{L^{\infty}(\overline{\Omega}_0)}+\mu\Vert \nabla^2R\Vert_{L^{\infty}(\overline{\Omega}_0)} \leq C.
      \end{align}
\end{lemma}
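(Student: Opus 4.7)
The plan is to express $R$ and its derivatives explicitly, and then use two structural inputs. Direct calculation from the definitions gives the algebraic identity
\begin{equation*}
d_{i^*}^2(x) - d_i^2(x) = \frac{(1-|x|^2)(1-r^2)}{\varepsilon^2} \geq 0,\qquad x\in\mathbf{B},
\end{equation*}
whence $d_i\leq d_{i^*}$ on $\mathbf{B}$ and the factorization
\begin{equation*}
U_i - U_{i^*} = \frac{K^{-1/4}(r)\Phi(0)}{\varepsilon^{1/2}}\cdot\frac{d_{i^*}-d_i}{d_i d_{i^*}} = \frac{K^{-1/4}(r)\Phi(0)(1-|x|^2)(1-r^2)}{\varepsilon^{5/2}\,d_i d_{i^*}(d_i+d_{i^*})}.
\end{equation*}
Secondly, since $rx\in\mathbf{B}$ whenever $x\in\mathbf{B}$, one has $|rx-\mathbf{m}_i|\geq 1-|rx|\geq 1-|x|$, giving the geometric bound $1-|x|\leq\varepsilon\,d_{i^*}(x)$. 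Combining these, together with $1-r^2\leq 2\sigma\mu^{1/2}$, $\varepsilon=\lambda^{-1}\mu$, and the fact that on $\overline{\Omega}_0$ we have $d_i\sim d_{i^*}\geq c\,\min(i,k-i)/\mu^{1/2}$ for $i\neq 0$ (by the geometric separation of the peaks $r\mathbf{m}_i$), I would obtain $|U_i-U_{i^*}|\leq C/(\mu d_{i^*}^2)$ for $1\leq i\leq k-1$, while $|U_{0^*}|\leq C/(\mu^{1/2}d_{0^*})$ is immediate. Summing over $i$ gives the first inequality for $|R|$.

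For $R_\lambda$ and $R_\sigma$, I would differentiate the same factored representation. The $\lambda$-dependence enters only through $\varepsilon=\lambda^{-1}\mu$ (with $\partial_\lambda\varepsilon=-\varepsilon/\lambda$), and the $\sigma$-dependence only through $r=1-\sigma\mu^{1/2}$ (with $\partial_\sigma r=-\mu^{1/2}$). Each differentiation contributes either an $O(1)$ factor by logarithmic differentiation on the prefactors $K^{-1/4}(r),\, 1-r^2,\,\varepsilon^{-5/2}$, or it perturbs $d_i, d_{i^*}$ by at most $C\mu^{1/2}/\varepsilon\leq C\mu^{-1/2}$. The latter is absorbed on $\overline{\Omega}_0$ by the lower bounds $d_i\geq c\mu^{-1/2}$ (for $i\neq 0$) and $d_{0^*}\geq c\mu^{-1/2}$, the second because $|rx-\mathbf{m}_0|\geq 1-r\geq c\mu^{1/2}$. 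So $|R_\lambda|$ and $|R_\sigma|$ are bounded by the same expression as $|R|$.

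For $\nabla R$ and $\nabla^2 R$, direct differentiation of a single bubble gives the pointwise bounds $|\nabla U_i|,\,|\nabla U_{i^*}|\leq C/(\mu^{3/2} d^2)$ and $|\nabla^2 U_i|,\,|\nabla^2 U_{i^*}|\leq C/(\mu^{5/2} d^3)$, with $d=d_i$ or $d_{i^*}$. Inserting the lower bounds $d_i(x),d_{i^*}(x)\geq c\min(i,k-i)/\mu^{1/2}$ on $\overline{\Omega}_0$ for $i\neq 0$ and $d_{0^*}(x)\geq c\mu^{-1/2}$, and summing using $\sum_{i\geq 1}i^{-2}<\infty$ and $\sum_{i\geq 1}i^{-3}<\infty$, yields $\|\nabla R\|_{L^\infty(\overline{\Omega}_0)}\leq C\mu^{-1/2}$ and $\|\nabla^2 R\|_{L^\infty(\overline{\Omega}_0)}\leq C\mu^{-1}$, as claimed. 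The chief technical delicacy is the parameter derivatives: differentiating with respect to $\sigma$ shifts the bubble center by $O(\mu^{1/2})$, large compared to the bubble scale $O(\mu)$, which naively loses a factor $\mu^{-1/2}$; this apparent loss is rescued precisely by the restriction to $\overline{\Omega}_0$, where the geometric separation forces $d_i\geq c\mu^{-1/2}$ for $i\neq 0$.
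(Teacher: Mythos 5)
Your proposal is correct and takes essentially the same route as the paper: the cancellation in $U_i-U_{i^*}$ is quantified through $d_{i^*}^2-d_i^2=(1-|x|^2)(1-r^2)/\varepsilon^2$ together with $1-|x|\le\varepsilon d_{i^*}$ (this is precisely how the paper's Appendix bounds $d_i\le d_{i^*}\le C_1 d_i$ and $\bigl|d_i^{-\rho}-d_{i^*}^{-\rho}\bigr|\le C\mu^{-1/2}d_{i^*}^{-\rho-1}$ arise), combined with the lower bounds $d_i\ge c\,\mu^{-1/2}$ for $i\neq 0$ and $d_{0^*}\ge c\,\mu^{-1/2}$ on $\overline{\Omega}_0$, and direct differentiation for $\nabla R$, $\nabla^2 R$. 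One minor point of wording: for the $\lambda$-derivative the perturbation of $d_i$ is not $O(\mu^{-1/2})$ in absolute size but satisfies the relative bound $\lambda\,\partial_\lambda d_i=(d_i^2-1)/d_i\le d_i$, i.e.\ an $O(1)$ logarithmic derivative, which is exactly what your factorized-product argument needs, so the conclusion stands.
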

\begin{proof}
      Note that 
      \begin{align*}
            U_i (x)=\frac{\lambda^{\frac{1}{2}}K^{-\frac{1}{4}}(r)\Phi(0)}{\mu^\frac{1}{2} d_i(x)},\quad U_{i^*} (x)=\frac{\lambda^{\frac{1}{2}}K^{-\frac{1}{4}}(r)\Phi(0)}{\mu^\frac{1}{2} d_{i^*}(x)}.
      \end{align*}
      It can be obtained that
       \begin{align}\label{eq.U_i_lambda_sigma_estimates}
            \begin{aligned}
                  {(U_i)}_{\lambda}&=\frac{U_i}{2\lambda}\Bigl( \frac{2}{d_i^2}-1\Bigr), \quad
                  {(U_{i^*})}_{\lambda}=\frac{U_{i^*}}{2\lambda}\Bigl(\frac{2}{d_{i^*}^2}-1\Bigr),\\
                  {(U_i)}_{\sigma}&=\frac{\lambda U_i}{\mu^{\frac{1}{2}}}
                  \Bigl(\frac{r \mathbf{m}_i-x}{\varepsilon}\cdot \frac{\mathbf{m}_i}{d_i^2}+\frac{\varepsilon}{4}\frac{K'(r)}{K(r)} \Bigr)=\frac{O(U_i)}{\mu^{\frac{1}{2}} d_i},
                  \\[3.5pt]
                   {(U_{i^*})}_{\sigma}&=\frac{\lambda U_{i^*}}{\mu^{\frac{1}{2}}}
                   \Bigl(\frac{rx -\mathbf{m}_i}{\varepsilon}\cdot \frac{x}{d_{i^*}^2}+\frac{\varepsilon}{4}\frac{K'(r)}{K(r)} \Bigr) = \frac{O(U_{i^*})}{\mu^{\frac{1}{2}} d_{i^*}}.
            \end{aligned}
      \end{align}

      Together with the estimates \eqref{eq.omega_rho_omega_rho_star_estimates} and \eqref{eq.d_i_comparing_d_i_star_estimates} in Appendix \ref{appendix}, the estimates for $R_\lambda$ and $R_\sigma$ are valid. Combining direct differentiation, the estimates for $R$, $\nabla R$ and $\nabla^2 R$ hold. The proof is done.
\end{proof}


\subsection{Error estimates for the ansatz}
Now we give the estimates for the error $F=\Delta W+K(x) W^{5+\mu}$. 
\begin{lemma}
     For $\mu>0$ small enough, fix $\rho\in [0,3)$, there is $C>0$ such that
      \begin{equation}\label{eq.F_estimates}
            |F(x)| \leq \frac{C}{d_0^{\rho+2}(x)} \left\{\mu^{-2}\left|L\left(\sigma\right)\right|+\mu^{\frac{\min\{1,2-\rho \}}{2}-2} \left|\ln\mu\right| \right\},~\text{for}~ x\in \overline{\Omega}_0.
      \end{equation}
      In particular, 
      \begin{align*}
      \left\Vert F \right\Vert_{\rho+2}\leq C \mu^{\frac{\min\{0,2-\rho\}}{2}-2}\left|\ln\mu\right|.
      \end{align*}

\end{lemma}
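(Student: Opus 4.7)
My plan is to decompose $F$ so that three distinct error sources separate. Using $-\Delta U_i = K(r)U_i^5$ and $-\Delta U_{i^*}=r^2 K(r) U_{i^*}^5$, I add and subtract $K(r)W^5$ and $K(x)W^5$ to write
\begin{equation*}
F = \underbrace{K(x)(W^{5+\mu}-W^5)}_{F_1} + \underbrace{(K(x)-K(r))\,W^5}_{F_2} + \underbrace{K(r)\Bigl[W^5-\sum_{i=0}^{k-1}U_i^5 + r^2\sum_{i=0}^{k-1}U_{i^*}^5\Bigr]}_{F_3},
\end{equation*}
where $F_1$ is the supercritical correction, $F_2$ measures the spatial variation of $K$, and $F_3$ captures the bubble--image interactions. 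I then bound each on $\overline{\Omega}_0$ and derive $\|F\|_{\rho+2}$ from the pointwise bound via $\omega_{\rho+2}(x)\geq d_0^{-(\rho+2)}$.

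For $F_1$, I use $W^{5+\mu}-W^5 = W^5(W^\mu-1)$ with $W^\mu-1 = \mu\ln W + O(\mu^2(\ln W)^2)$; since $W\lesssim \mu^{-1/2}\omega_1$, one has $|\ln W|\lesssim |\ln\mu|$, so $|F_1|\lesssim \mu|\ln\mu|\,W^5 \lesssim \mu^{-3/2}|\ln\mu|/d_0^5$ in the peak region, with cross-bubble tails absorbed via \eqref{eq.omega_rho_omega_rho_star_estimates}. For $F_2$, the $C^{1,1}$ hypothesis on $K$ gives $|K(x)-K(r)|\lesssim ||x|-r|\lesssim|x-r\mathbf{m}_0|\leq \varepsilon d_0 \sim \mu d_0$ for $x$ near $r\mathbf{m}_0$, whence $|F_2|\lesssim \mu d_0 \cdot W^5 \lesssim \mu^{-3/2}/d_0^4$; in the far region of $\Omega_0$ the pointwise smallness of $W$ already dominates the trivial bound $|K(x)-K(r)|\leq 2\|K\|_\infty$. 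Both pieces contribute to the $\mu^{\min\{1,2-\rho\}/2-2}|\ln\mu|$ error after comparing $d_0^{-5}$ (or $d_0^{-4}$) with $d_0^{-(\rho+2)}$ for $\rho\in[0,3)$.

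The core is $F_3$. Write $W = U_0 + R$ as in Section \ref{subsec.notations} and Taylor-expand $W^5 = U_0^5 + 5U_0^4 R + O(U_0^3 R^2 + R^5)$, which gives
\begin{equation*}
F_3/K(r) = 5 U_0^4 R - \sum_{i\neq 0}U_i^5 + r^2 U_{0^*}^5 + r^2 \sum_{i\neq 0}U_{i^*}^5 + O(U_0^3 R^2 + R^5).
\end{equation*}
The $L(\sigma)$-term originates from $5 U_0^4 R$: the Lipschitz bound \eqref{eq.R_estimates_x} lets me replace $R(x)$ by $R(r\mathbf{m}_0)$ up to $O(\mu^{1/2}d_0)$, and the formula \eqref{eq.E_x_0_and_nabla_E_x_0} rewrites $R(r\mathbf{m}_0) = \Phi(0)K^{-1/4}(r)\bigl(\tfrac12 L(\sigma)+O(\mu^{1/2})\bigr)$; together with $U_0^4(x)\sim \mu^{-2}/d_0^4$ this recovers the principal contribution $\mu^{-2}L(\sigma)/d_0^4$ appearing in the claim. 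The remaining tail sums $\sum_{i\neq 0}(U_i^5 - r^2 U_{i^*}^5)$ and quadratic remainders $U_0^3 R^2,\,R^5$ are estimated termwise using the $R$-decay in \eqref{eq.R_estimates_parameters} together with the appendix inequality \eqref{eq.d_i_comparing_d_i_star_estimates} and swept into the $|\ln\mu|$-error.

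The main obstacle will be the bookkeeping inside $F_3$: one must check that the mismatch between the linearized interaction $5 U_0^4 R$ and the explicit tails $\sum_{i\neq 0}U_i^5 - r^2\sum_i U_{i^*}^5$, after the $L(\sigma)$ piece is extracted, genuinely decays like $d_0^{-(\rho+2)}$ with the correct $\mu$-power for every admissible $\rho\in[0,3)$, and that this matches the piecewise minimum appearing in the exponent of the error term. Everything else is a direct Taylor expansion aided by the pointwise bounds for $R$, $\nabla R$ and the weighted comparisons in the appendix.
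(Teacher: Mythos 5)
Your proposal is correct and follows essentially the same route as the paper's proof: apart from a cosmetic regrouping (your $F_1+F_2$ plays the role of the paper's term $K(x)U_0^{5+\mu}-K(r)U_0^5$, treated there only at the level of $U_0$, while your $F_3$ collects the paper's $(5+\mu)K(x)U_0^{4+\mu}R$, $\Delta R$ and the quadratic remainder), both arguments rest on writing $W=U_0+R$, freezing $R$ at $r\mathbf{m}_0$ via \eqref{eq.R_estimates_x} to extract the $\mu^{-2}L(\sigma)$ contribution through \eqref{eq.E_x_0_and_nabla_E_x_0}, and absorbing the tails with \eqref{eq.R_estimates_parameters}, \eqref{eq.omega_rho_omega_rho_star_estimates} and \eqref{eq.d_i_comparing_d_i_star_estimates}. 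The bookkeeping you defer at the end is precisely the near/far (in $d_0$) and $\rho$-range case analysis the paper carries out, so nothing essentially new is needed.
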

\begin{proof}
      Assume that $x\in\overline{\Omega}_0$ and $\rho \in [0,3)$. Using $W=U_0+R$ and $-\Delta U_0=K(r)U_0^5$, we obtain
      \begin{align}\label{eq.temp_F_x_F_1}
            \begin{aligned}
                  F = \left(K(x)U_0^{5+\mu}-K(r)U_0^5\right)+(5+\mu)K(x)U_0^{4+\mu}R+\Delta R +F_1,
            \end{aligned}
      \end{align}
      where 
      \begin{align*}
            F_1 := K(x)\left(W^{5+\mu}-U_0^{5+\mu}- (5+\mu)U_0^{4+\mu}R\right).
      \end{align*}
      We estimate each term in the right-hand side of \eqref{eq.temp_F_x_F_1}.

      First, since $\frac{1}{d_{i^*}}\leq \frac{1}{d_{0^*}^\theta d_{i^*}^{1-\theta}}$ for $\theta \in [0,1)$,
      by \eqref{eq.R_estimates_parameters} and \eqref{eq.omega_rho_omega_rho_star_estimates}, there holds
      \begin{equation}
            \label{eq.epsilon_R_theta_estimates}
            \left|R \right| \leq \frac{C\mu^{-\frac{\theta}{2}}}{d_{0^*}^\theta}\quad \text{for all}~\theta\in [0,1).
      \end{equation}
   Here we discuss two subcases:

      Case (i). For $\rho\in[1,3)$, or $d_0\geq \mu^{-\frac{1}{2}}$.
      Ccombined with the fact
      \begin{align}
            d_0^\mu\rightarrow 1 ~\text{as}~ \mu\rightarrow 0,
      \end{align}
      by selecting $\theta = 2\rho - 1$, we get
      \begin{align}\label{eq.F_1_subcases_temp_1}
            F_1 \leq C\left[ \left|R\right|^{5+\mu}+U_0^{3+\mu}R^2\right] \leq \frac{C}{d_0^3}
            \frac{\mu^{-\frac{3}{2}-\theta}}{d_{0^*}^{2\theta}}
            \leq \frac{C\mu^\frac{-2-\rho}{2}}{d_0^{\rho+2}}.
      \end{align}

      Case (ii). For $\rho\in \left[0,1\right)$ and $d_0\leq \mu^{-\frac{1}{2}}$. Using \eqref{eq.R_estimates_x}, we see that
      \[
             R^2(x)
            = R^2(r\mathbf{m}_0)+O(\mu^\frac{1}{2}d_0)
            = R^2(r\mathbf{m}_0)+O\bigl(\mu^{\frac{1-\rho}{2}}d_0^{1-\rho}\bigr).
      \]
      Hence
      \[
            U_0^{3+\mu} R^2 = U_0^{3+\mu} R^2\left(r\mathbf{m}_0\right) + \frac{O\bigl(\mu^\frac{-1-\rho}{2}\bigr)}{d_0^{\rho+2}}.
      \]
      Then
      \begin{align}\label{eq.F_1_subcases_temp_2}
      F_1=\frac{(5+\mu)(4+\mu)}{2}U_0^{3+\mu}R^2(r\mathbf{m}_0)+\frac{O\bigl(\mu^\frac{-1-\rho}{2}\bigr)}{d_0^{\rho+2}}
      =\frac{O\bigl(\mu^{\frac{\min \left\{1,2-\rho\right\}}{2}-2}\bigr)}{d_0^{\rho+2}}.
      \end{align}
     Combining \eqref{eq.F_1_subcases_temp_1} and \eqref{eq.F_1_subcases_temp_2}, we have 
      \begin{align*}
            F_1 = \frac{(5+\mu)(4+\mu)}{2}U_0^{3+\mu}R^2(r\mathbf{m}_0) \mathbbold{1}_{\{\sqrt{\mu} d_0<1\}}
            +\frac{O\bigl(\mu^\frac{-2-\rho}{2}\bigr)}{d_0^{\rho+2}}.
      \end{align*}
      Consequently,
      \begin{align}\label{eq.F_temp_estimates}
            \begin{aligned}
                  F  ={}&\left(K\left(x\right)U_0^{\mu}-K\left(r\right)\right)U_0^5+\left(5+\mu\right)K\left(x\right)U_0^{4+\mu}R\\
                  & +\Delta R +
                  \frac{(5+\mu)(4+\mu)}{2}U_0^{3+\mu}R^2(r\mathbf{m}_0) \mathbbold{1}_{\{\sqrt{\mu} d_0<1\}}
                  +\frac{O\bigl(\mu^\frac{-2-\rho}{2}\bigr)}{d_0^{\rho+2}}.
            \end{aligned}
      \end{align}

      For the second term of the right-hand side, since $K(r)\left(1-r^2\right)\sum_{0\leq i \leq k-1}U_{i^*}^5<2\sigma U_{0^*}^5$, we obtain from \eqref{eq.omega_rho_omega_rho_star_estimates} and \eqref{eq.d_i_comparing_d_i_star_estimates} that
      \begin{align*}
            \begin{aligned}
            |\Delta R|  = K(r)\biggl|r^2\sum_{i=0}^{k-1}U_{i^*}^5-\sum_{i=1}^{k-1}U_{i}^5\biggr| \leq &
            \frac{C}{\mu^\frac{5}{2}}
            \biggl(\frac{1}{d_{0^*}^5}+\sum_{i=1}^{k-1}\frac{1}{\mu^\frac{1}{2}d_{i^*}^6} \biggr) \\[3pt]
            \leq {}&\frac{C}{\mu^\frac{5}{2}d_{0^*}^{\rho+2}} 
            \left(\frac{1}{d_{0^*}^{3-\rho}}+\sum_{i=0}^{k-1}\frac{1}{\mu^\frac{1}{2}d_{i^*}^{4-\rho}}\right)
            \\[3pt]
            \leq{}&\frac{C}{\mu^\frac{5}{2}}\frac{\mu^\frac{3-\rho}{2}}{d_{0^*}^{\rho+2}}.    
            \end{aligned}
      \end{align*}
Here we use $\lambda \varepsilon = \mu$ by $\varepsilon = \lambda^{-1}\mu$ and $\lambda$ in \eqref{eq.independent_parameters}. 

 Since $K\left(x\right)U_0^{\mu}-K(r)=\left(K\left(x\right)-K\left(r\right)\right)+K\left(x\right)\left(U_0^\mu-1\right)=O\left(\mu d_0\left|\ln \mu\right|\right)$, we have
      \begin{align*}
            \begin{aligned}
                  \left(K\left(x\right)U_0^{5+\mu}-K\left(r\right)U_0^5\right)&=\frac{O\left(\mu\left|\ln \mu\right|\right)}{\varepsilon\frac{5}{2}d_0^4}\\[10pt]
                  &=\begin{cases}
                        \displaystyle\frac{O\left(\mu^{-\frac{3}{2}}\left|\ln \mu\right|\right)}{d_0^{\rho+2}}            & \text{if}~ \rho\in \left[0,2\right), \vspace{10pt} \\
                        \displaystyle\frac{O\left(\varepsilon^{3-\rho}{\left( \varepsilon d_0 \right)}^{\rho-2}\right)}{\varepsilon^\frac{5}{2}d_0^{\rho+2}}
                        =\frac{O\left(\mu^{\frac{1}{2}-\rho}\left|\ln \mu\right|\right)}{ d_0^{\rho+2}} & \text{if}~\rho\in\left[2,3\right).
                  \end{cases}
            \end{aligned}
      \end{align*}
      \par\noindent
     Finally, we estimate $U_0^{4+\mu}R$ by the following two subcases.

      Case (i). If $\rho\in\left[0,2\right)$ and $d_0\leq \mu^{-\frac{1}{2}}$, then
      \begin{align*}
            \begin{aligned}
                  U_0^{4+\mu}R\leq & C\biggl(\frac{\mu^\frac{1}{2}\left|E(r\mathbf{m}_0)\right|}{ \varepsilon^\frac{5}{2} d_0^4}
                  +\frac{\mu\left|\ln \mu\right|}{ \varepsilon^\frac{5}{2} d_0^3}\biggr)\\
                  \leq & C\frac{1}{d_0^{\rho+2}}\left(\left|E(r\mathbf{m}_0)\right|\mu^{-2}
                  +\mu^{\frac{\min \left\{1,2-\rho\right\}}{2}-2}\left|\ln \mu\right| \right).   
            \end{aligned}
      \end{align*}
      Recall that $E(r\mathbf{m}_0)=\frac{1}{2}L\left(\sigma\right)+O\left(\mu^\frac{1}{2}\right)$ in \eqref{eq.E_x_0_and_nabla_E_x_0}. Thus it is done.
      
      Case (ii). If $\rho\in[2,3)$, or $\rho\in[0,2)$ and $d_0\geq \mu^{-\frac{1}{2}}$, then
      \[
            U_0^{4+\mu}R=\frac{O\bigl(\mu^\frac{1-\theta}{2}\bigr)}{\varepsilon^\frac{5}{2}d_0^4\cdot d_{0^*}^\theta}=
            \frac{O\bigl(\mu^\frac{3-\rho}{2}\bigr)}{\varepsilon^\frac{5}{2} d_0^{\rho+2}}=
            \frac{O\bigl(\mu^\frac{-1-\rho}{2}\bigr)}{ d_0^{\rho+2}}.
      \]
Combining all these estimates above, we obtain the assertion of Lemma \ref{lemma.energy_estimates}.
\end{proof}

\subsection{Further estimates for \texorpdfstring{$W$, $W_\lambda$, $W_\sigma$}{\$W\$, \$W\_\textbackslash{} lambda\$, \$W\_\textbackslash{} sigma\$}}
 We present some other useful estimates for our ansatz.
\begin{lemma}\label{lemma.W_norm_estimates}
      The following estimates hold:\vspace{5pt}\par
      (i) for any $\rho\geq\frac{1}{2}$ and $f\in  \mathcal{H}_k$, $\| f W^{4+\mu} \|_{\rho+3} \leq C \| \mu^{-2} f \|_\rho$,\vspace{10pt}\par
      (ii) $\left\Vert \Delta W_\lambda\right\Vert_5+\mu^\frac{1}{2}\left\Vert \Delta W_\sigma\right\Vert_6=O\left(\mu^{-\frac{5}{2}}\right)$.
\end{lemma}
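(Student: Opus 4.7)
The plan is to reduce both estimates to pointwise inequalities on the fundamental region $\overline{\Omega}_0$ using the $\mathcal{H}_k$-symmetry, and then translate these into the weighted norms by invoking the comparison lemmas \eqref{eq.omega_rho_omega_rho_star_estimates}--\eqref{eq.d_i_comparing_d_i_star_estimates} of the appendix.

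For part (i), writing $|f(x)|\le\|f\|_\rho\,\omega_\rho(x)$, the desired bound reduces to the pointwise inequality
\[
      \omega_\rho(x)\,W^{4+\mu}(x)\le C\mu^{-2}\,\omega_{\rho+3}(x),\quad x\in\overline{\Omega}_0.
\]
On $\overline{\Omega}_0$ I would split $W=U_0+R$. The leading term satisfies $U_0\le C\mu^{-1/2}/d_0$ from \eqref{eq.U_i_U_i_star}, so $U_0^{4+\mu}\le C\mu^{-2}/d_0^{4+\mu}\le C\mu^{-2}/d_0^{3}$ since $d_0\ge 1$ and $\mu^{-\mu/2}\to 1$. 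Combined with the fact that on $\Omega_0$ the peak $\mathbf{m}_0$ is the nearest among the $\mathbf{m}_i$'s (separated by distances of order $1/k$), the weight $\omega_\rho$ is comparable to $d_0^{-\rho}$ up to a controlled tail; the hypothesis $\rho\ge\tfrac{1}{2}$ is exactly what is needed to keep $\sum_{i\ne 0}d_i^{-\rho}$ uniformly bounded after rescaling by the inter-peak separation $\sim 1/k$. This yields $\omega_\rho U_0^{4+\mu}\le C\mu^{-2}\omega_{\rho+3}$. The contribution of $R$ is then absorbed using \eqref{eq.R_estimates_parameters} together with the weight comparison $\omega_\rho^*\le C\omega_\rho$ from the appendix.

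For part (ii), I would differentiate the defining equations $-\Delta U_i=K(r)U_i^5$ and $-\Delta U_{i^*}=r^2K(r)U_{i^*}^5$ with respect to $\lambda$ and $\sigma$. Since $r=1-\sigma\mu^{1/2}$ does not depend on $\lambda$,
\[
      -\Delta (U_i)_\lambda=5K(r)U_i^4(U_i)_\lambda,\qquad -\Delta (U_{i^*})_\lambda=5r^2K(r)U_{i^*}^4(U_{i^*})_\lambda,
\]
and by \eqref{eq.U_i_lambda_sigma_estimates} we have $|(U_i)_\lambda|\le CU_i$ and $|(U_{i^*})_\lambda|\le CU_{i^*}$. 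Hence $|\Delta W_\lambda|\le C\sum_i(U_i^5+U_{i^*}^5)\le C\mu^{-5/2}(\omega_5+\omega_5^*)$, and the appendix comparison gives $\|\Delta W_\lambda\|_5=O(\mu^{-5/2})$. For $\sigma$, differentiation produces an additional term $K'(r)r_\sigma U_i^5=-\mu^{1/2}K'(r)U_i^5$ (contributing $O(\mu^{-2}/d_i^5)$), plus $5K(r)U_i^4(U_i)_\sigma$ with $(U_i)_\sigma=O(U_i)/(\mu^{1/2}d_i)$ (contributing $O(\mu^{-3}/d_i^6)$). The second term dominates at the $\omega_6$-scale, giving $\|\Delta W_\sigma\|_6=O(\mu^{-3})$, whence $\mu^{1/2}\|\Delta W_\sigma\|_6=O(\mu^{-5/2})$.

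The main technical difficulty I anticipate is the uniform bookkeeping in part (i): the weight families $\omega_\rho$ and $\omega_\rho^*$ are centered at $r\mathbf{m}_i\in\mathbf{B}$ and $\mathbf{m}_i/r\notin\mathbf{B}$ respectively, so one must verify that this geometric mismatch, combined with the $(4+\mu)$-power of $W$ and the growing number of peaks $k\sim\mu^{-1/2}$, does not create extraneous factors of $k$ or $\log k$ in the constant $C$. Once the appendix comparison lemmas are established, the remainder is routine weighted $L^\infty$ manipulation.
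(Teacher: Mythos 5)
Part (ii) of your proposal is fine and is essentially the paper's argument: differentiate $-\Delta W=\sum_i K(r)(U_i^5-r^2U_{i^*}^5)$, use $|(U_i)_\lambda|\le CU_i$, $(U_i)_\sigma=O(U_i)/(\mu^{1/2}d_i)$ (and likewise for $U_{i^*}$, with the $r_\sigma$-terms contributing only $O(\mu^{1/2})U_{i^*}^5$), and conclude with $d_i\le d_{i^*}$. Part (i), however, has a genuine gap exactly at the step you yourself flagged as the main difficulty. After the (correct, and same as the paper's) reduction to the pointwise bound $\mu^2\omega_\rho W^{4+\mu}\le C\omega_{\rho+3}$ on $\overline\Omega_0$, you discard a power of $d_0$ ($U_0^{4+\mu}\le C\mu^{-2}d_0^{-3}$) and justify the conclusion by ``$\omega_\rho$ comparable to $d_0^{-\rho}$ up to a bounded tail.'' Neither half survives scrutiny. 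On $\Omega_0$ one only has $d_0\le d_i$; at points far from the peaks the tail $\sum_{i\ne 0}d_i^{-\rho}$ exceeds $d_0^{-\rho}$ by a factor up to $k\sim\mu^{-1/2}$, and boundedness of the tail is not enough because $\omega_{\rho+3}$ is itself small there. Concretely, your route requires $\omega_\rho d_0^{-3}\le C\omega_{\rho+3}$, which fails for $\rho\in[\tfrac12,1)$: take $x\in\Omega_0$ with $d_0\sim\mu^{-1/2}$ and $\rho=\tfrac12$; then the tail alone gives $\omega_{1/2}\sim\mu^{1/4}k^{1/2}\sim 1$, so $\omega_{1/2}d_0^{-3}\sim\mu^{3/2}$, whereas $\omega_{7/2}\sim d_0^{-7/2}+\mu^{7/4}\sim\mu^{7/4}$, a mismatch of order $\mu^{-1/4}$. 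The statement you want is true, but only if you keep the full decay: for $\rho\in[\tfrac12,1]$ one has $\omega_\rho=O(1)$ and $U_0^{4+\mu}\le C\mu^{-2}d_0^{-4}\le C\mu^{-2}d_0^{-(\rho+3)}\le C\mu^{-2}\omega_{\rho+3}$; for general $\rho\ge\tfrac12$ the paper instead bounds all of $W$ by $C\mu^{-1/2}d_0^{-\theta}$ with $\theta<1$ close to $1$ (so $\mu^2W^{4+\mu}\le Cd_0^{-4\theta}\le C\omega_{4\theta}$) and then uses the weight interpolation in \eqref{eq.omega_rho_omega_rho_star_estimates} to get $\omega_{4\theta}\omega_\rho\le C\omega_{\rho+3}$ — this interpolation, not comparability of $\omega_\rho$ with $d_0^{-\rho}$, is where $\rho\ge\tfrac12$ really enters. (Incidentally, the paper's printed $\theta=\tfrac58$ is itself too small for that interpolation; $\theta$ must be near $1$, e.g. $\tfrac78$, but the mechanism is the one just described.)

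Relatedly, ``the contribution of $R$ is then absorbed using \eqref{eq.R_estimates_parameters} together with $\omega_\rho^*\le C\omega_\rho$'' is not yet an argument: the terms involving $|R|^{4+\mu}$ and the mixed powers must be handled with the quantitative bound $|R|\le C\mu^{-\theta/2}d_{0^*}^{-\theta}$ of \eqref{eq.epsilon_R_theta_estimates}, combined with $d_{0^*}\ge d_0$ and $d_{0^*}\ge c\mu^{-1/2}$ on $\overline\Omega_0$, and again the interpolation of weights; this works precisely when $\theta$ is taken close to $1$, and fails for small $\theta$ for the same reason as above. So the missing ingredient in your part (i) is the weighted interpolation step replacing your comparability claim, together with not giving away the extra power of $d_0$.
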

\begin{proof} Recall that $\Vert \cdot \Vert_\rho $ means the weighted norm: 
      \begin{align*}
            \Vert f\Vert_\rho = \sup_{x \in \mathbf{B}}\frac{|f(x)|}{\omega_\rho(x)}.
      \end{align*}
      First, we estimate $\| f W^{4+\mu} \|_{\rho+3}$.
      Using $W=U_0+R$ and \eqref{eq.epsilon_R_theta_estimates}, we have, for any $\theta\in[0,1)$ and $x\in\Omega_0$,
      \begin{equation}
            \label{eq.W_d_theta_mu}
           W=\frac{O(1)}{ \varepsilon^\frac{1}{2} d_0}+\frac{\mu^{-\frac{\theta}{2}}}{ d_{0^*}^\theta}.
      \end{equation}    
      Hence, let $x\in \Omega_0$ and fix $\theta=\frac{5}{8}$, then $\rho+3\geq 4\theta\geq \frac{1}{2}$. Combined with \eqref{eq.omega_rho_omega_rho_star_estimates}, it holds
      \begin{equation*}
            \frac{\mu^2 f W^{4+\mu}}{\Vert f\Vert_\rho}=\frac{O(\omega_{\rho})}{d_0^{4\theta}}=O(\omega_{4\theta}\omega_{\rho}) = O\bigl(\omega_{\rho+3}^{\frac{6}{2\rho+5}}\omega_{\rho+3}^{\frac{2\rho-1}{2\rho+5}} \bigr)=O(\omega_{\rho+3}).
      \end{equation*}
      The first assertion of the lemma thus follows.\par
      Next, note that $-\Delta W_\lambda ={(-\Delta W)}_\lambda=O(1)\varepsilon^{-\frac{5}{2}}\sum_{i=0}^{k-1}d_i^{-5}$, we hence find that $\Vert\Delta W_\lambda\Vert_5 = O(\mu^{-\frac{5}{2}})$. Similarly, we find that $\Vert\mu^\frac{1}{2}\Delta W_\sigma\Vert_6=O(\mu^{-\frac{5}{2}})$.
      This completes the proof.
\end{proof}
Now, we show the almost orthogonality of $W_\lambda$ and $W_\sigma$ in $H_0^1(\mathbf{B})$.
\begin{lemma}\label{lemma.M_estimates}
      There are positive  constants $a_1$ and $a_2$ such that
      \begin{equation}\label{eq.M_estimates}
            M := \mu^{\frac{1}{2}}
            \bbm \lambda^2\left< \nabla W_\lambda,\nabla W_\lambda \right>
            & \lambda^{-1}\mu^{\frac{1}{2}}\left< \nabla W_\sigma,\nabla W_\lambda \right>
            \\ \lambda^{-1}\mu^{\frac{1}{2}}\left< \nabla W_\sigma,\nabla W_\lambda \right>
            &  \lambda^{-2} \mu \left< \nabla W_\sigma,\nabla W_\sigma \right>
            \ebm
            = \bbm a_1 & 0 \\ 0 & a_2 \ebm +O\bigl(\mu^\frac{1}{2}\bigr).
      \end{equation}
\end{lemma}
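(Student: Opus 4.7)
\textbf{Proof proposal for Lemma \ref{lemma.M_estimates}.}

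The plan is to start from integration by parts and reduce each entry of $M$ to an integral of the form $-\int_{\mathbf{B}} W_\alpha\,\Delta W_\beta\,dx$, where $\alpha,\beta\in\{\lambda,\sigma\}$. The boundary terms vanish because $W\equiv 0$ on $\partial\mathbf{B}$, so $W_\lambda$ and $W_\sigma$ vanish there as well. Next I use $-\Delta U_i=K(r)U_i^5$ and $-\Delta U_{i^*}=r^2 K(r)U_{i^*}^5$. Differentiating these identities in $\lambda$ (observing that $r$, and hence $K(r)$, does not depend on $\lambda$) and in $\sigma$ (with $r_\sigma=-\mu^{1/2}$) produces explicit formulas for $\Delta(U_i)_\alpha$ and $\Delta(U_{i^*})_\alpha$, each built from $U_i^4(U_i)_\alpha$ and $U_{i^*}^4(U_{i^*})_\alpha$ up to small corrections involving $K'(r)$ and $2rr_\sigma$.

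By the rotational symmetry present in $\mathcal{H}_k$, each of the $k$ bubbles contributes equally, so every entry of $M$ equals $k$ times a single-bubble integral plus interaction remainders. For the single-bubble integral I rescale $y=(x-r\mathbf{m}_0)/\varepsilon$, write $U_0(x)=\varepsilon^{-1/2}K(r)^{-1/4}\Phi(y)$, and plug the explicit expressions \eqref{eq.U_i_lambda_sigma_estimates} into the reduced integrals. To leading order $(U_0)_\lambda$ is proportional to the scaling kernel $\Phi+2\,y\cdot\nabla\Phi$, while $(U_0)_\sigma$ is proportional to the translation kernel $\mathbf{m}_0\cdot\nabla\Phi$. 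The resulting universal integrals over $\mathbb{R}^3$ are explicit positive constants, and after combining with the prefactors $k\mu^{1/2}\lambda^2$ and $k\mu^{3/2}\lambda^{-2}$ (using $k\sim\mu^{-1/2}$) they yield $a_1$ and $a_2$ respectively. Replacing the domain $\Omega_0$ by $\mathbb{R}^3$ costs only a tail of polynomial order: in rescaled variables $\Omega_0$ has diameter $\sim 1/(\varepsilon k)\sim\mu^{-1/2}$ and $\Phi$ decays like $|y|^{-1}$, so this truncation error is absorbed into $O(\mu^{1/2})$.

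For the off-diagonal entry the leading single-bubble integrand is the product of an even function of $y\cdot\mathbf{m}_0$ (the scaling kernel) and an odd one (the translation kernel), which vanishes upon integration over $\mathbb{R}^3$. The surviving contribution comes from the subleading correction $\tfrac{\mu^{1/2}K'(r)}{4K(r)}\Phi$ in $(U_0)_\sigma$ and from the fact that $\Omega_0$ is not exactly symmetric across the hyperplane $y\cdot\mathbf{m}_0=0$; both are of size $O(\mu^{1/2})$ relative to the diagonal entries after the normalization $\lambda^{-1}\mu$ is applied.

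The delicate remaining step, and the point I expect to be the main obstacle, is controlling the cross-bubble pieces: (a) interactions between distinct bubbles $U_i,U_j$ with $i\neq j$, and (b) interactions between bubbles $U_i$ and reflected bubbles $U_{j^*}$ (including $j=i$). Using the weighted estimates \eqref{eq.omega_rho_omega_rho_star_estimates} and \eqref{eq.d_i_comparing_d_i_star_estimates} from the appendix, the geometric separation $|\mathbf{m}_i-\mathbf{m}_j|\gtrsim k^{-1}\sim\mu^{1/2}$, and the concentration scale $\varepsilon\sim\mu$, each pairwise interaction is of order $\varepsilon/|\mathbf{m}_i-\mathbf{m}_j|^2$. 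Summing over $i\neq j$ gives an overall $O(\mu)$ contribution to the $\lambda\lambda$ entry; the factor of $\mu^{-1/2}$ carried by each $\sigma$-derivative absorbs into the $\lambda^{-1}\mu^{1/2}$ and $\lambda^{-2}\mu$ prefactors so that the $\sigma\sigma$ and cross entries also receive only $O(\mu^{1/2})$ cross-bubble errors. The same estimates handle the $U_{j^*}$ terms, since $U_{j^*}$ is concentrated at $\mathbf{m}_j/r$ outside $\mathbf{B}$ and $d_{i^*}(x)\gtrsim d_i(x)$ in $\Omega_0$. The bookkeeping for this last reduction — in particular ensuring the $O(\mu^{1/2})$ rather than $O(1)$ bound when $(U_{i^*})_\sigma$ is involved — is where the technical care of the proof concentrates.
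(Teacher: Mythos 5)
Your overall strategy (integration by parts, symmetry reduction to a single bubble, rescaling to the universal integrals that produce $a_1,a_2$, parity for the off-diagonal entry) is the same as the paper's. The genuine gap is in the step you yourself flag as the crux: the cross-bubble bookkeeping. For two \emph{individual} bubbles the interaction is
\begin{equation*}
      \int_{\mathbf{B}}U_i^5\,U_j\,dx\;\approx\;C\varepsilon^{1/2}\,U_j(r\mathbf{m}_i)\;\sim\;\frac{\varepsilon}{|\mathbf{m}_i-\mathbf{m}_j|},
\end{equation*}
i.e.\ one power of the inter-peak distance, not the $\varepsilon/|\mathbf{m}_i-\mathbf{m}_j|^2$ you claim. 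With the correct first-power decay, summing over the $k-1$ other peaks gives $\sum_{j\neq 0}\varepsilon/|\mathbf{m}_0-\mathbf{m}_j|\sim\varepsilon k\ln k\sim\mu^{1/2}|\ln\mu|$, and after the symmetry factor $k\sim\mu^{-1/2}$ and the prefactor $\mu^{1/2}\lambda^2$ the cross-bubble contribution to the $(1,1)$ entry is only $O\bigl(\mu^{1/2}|\ln\mu|\bigr)$, not the $O(\mu)$ you assert. So estimating your pieces (a) and (b) separately, in absolute value, cannot deliver \eqref{eq.M_estimates} as stated; the per-pair squared-distance bound is unjustified and the logarithm does not disappear.

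The missing idea is the cancellation between each bubble and its Kelvin-reflected partner. In $\Omega_0$ one has $|U_j-U_{j^*}|\leq C\mu^{-1}d_{j^*}^{-2}$ for $j\neq 0$, a consequence of \eqref{eq.d_i_comparing_d_i_star_estimates}, which is one power of $d_{j^*}$ better than $U_j$ alone; this is exactly what makes $\sum_{j\geq1}d_{j^*}^{-2}(r\mathbf{m}_0)$ summable without a logarithm, whereas $\sum_{j\geq1}U_j(r\mathbf{m}_0)\sim\ln k$ diverges. The paper therefore never estimates the bubbles one by one: it writes $W_t=(U_0)_t+R_t$, controls $R_t$ through \eqref{eq.R_estimates_parameters} (so that $R_t=O(1)$ on the core $\mathbf{B}_0$, giving an $O(\mu^{1/2})$ contribution there), and on $\mathbf{B}\setminus\mathbf{B}_0$ uses $d_0\thicksim d_{0^*}\gtrsim\mu^{-1/2}$ together with \eqref{eq.d_i_d_i_star_estimates} to get $O(\mu)$. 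To repair your argument you must replace the pair-by-pair estimate by the paired quantity $U_j-U_{j^*}$ (equivalently, by $R$ and $R_t$ as in \eqref{eq.R_estimates_parameters}); as written, your bookkeeping for the "delicate remaining step" fails, and at best yields the weaker error $O\bigl(\mu^{1/2}|\ln\mu|\bigr)$.
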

\begin{proof}
      For $l,t \in \left\{\lambda,\sigma\right\}$, Using integration by parts and symmetry,
      \begin{equation}
            \label{eq2.31}
            \mu^\frac{1}{2}\left<\nabla W_l,\nabla W_t \right>=\mu^\frac{1}{2}\int_{\mathbf{B}}\left(-\Delta W_l\right)W_t dx=
            \int_{\mathbf{B}}{\left[K(r)(U_0^5-r^2U_{0^*}^5)\right]}_l W_t dx.
      \end{equation}
      Note that $d_0 \thicksim d_{0^*}$ in $\mathbf{B}\backslash\mathbf{B}_0$, where 
      \begin{align*}
            \mathbf{B}_0 :=  \left\{x:|x-r\mathbf{m}_0|<\sigma_0 \mu^\frac{1}{2}\right\}\subset\Omega_0,
      \end{align*}
      which is defined as in \eqref{eq.B_def}. Also $|U_{i\lambda}|+\mu^\frac{1}{2}|U_{i\sigma}|+|U_{i^*\lambda}|+|U_{i^*\sigma}|=O(U_i) ~\text{in}~ \mathbf{B}$. Hence,
      for $l=\lambda$ or $l=\sigma$, by \eqref{eq.d_i_d_i_star_estimates}, we get
      \begin{align*}
            \begin{aligned}
                  &\int_{\mathbf{B}\backslash\mathbf{B}_0}\Bigl|{\left[K(r)(U_0^5-r^2U_{0^*}^5)\right]}_l \Bigr|\left(\left|W_\lambda\right|+\mu^\frac{1}{2}W_\sigma\right)dx \\
                  &\hspace{2em} \leq C\sum_{i=0}^{k-1}\int_{\mathbf{B}\backslash\mathbf{B}_0}\frac{1}{d_{0^*}d_i}\frac{dx}{\varepsilon^3} \leq C \sum_{i=0}^{k-1}\frac{\mu}{d_{0^*}(r\mathbf{m}_i)} \leq C\sum_{i=0}^{k-1}\frac{\mu}{d_{i^*}(r\mathbf{m}_0)}=O(\mu).
            \end{aligned}
      \end{align*}
      Using \eqref{eq.R_estimates_parameters}, we obtain that for $t=\lambda$ or $t=\sigma$,
      \begin{equation*}
            \begin{aligned}
                  &\int_{\mathbf{B}_0}\left\{\left|{[K(r)(U_0^5-r^2 U_{0^*}^5)]}_\lambda\right|+\mu^\frac{1}{2}\left|{[K(r)(U_0^5-r^2U_{0^*}^5)]}_\sigma\right|\right\}|R_t|dx
                  \\
                  &\hspace{3em} =O\bigl(\mu^\frac{1}{2}\bigr)\int_{\mathbf{B}_0}\frac{1}{d_0^5}\frac{dx}{\varepsilon^3}=O\bigl(\mu^\frac{1}{2}\bigr).
            \end{aligned}
      \end{equation*}
      The assertion of the lemma thus follows from the identities, ignoring the $K (r)$ factor,
      \begin{equation*}
            \begin{aligned}
                  \lambda^2\int_{\mathbb{R}^3}U_0^4 {\left(U_{0}\right)}_{\lambda}^2 dx=a_1 := 
                  \int_{\mathbb{R}^3}\Phi^4(x)\left|\frac{1}{2}\Phi (x)+x\cdot \nabla \Phi (x)\right|^2dx, \\
                  \lambda^{-2}\mu \int_{\mathbb{R}^3}U_0^4 {\left(U_{0}\right)}_{\sigma}^2 dx=a_2 := 
                  \frac{1}{3}\int_{\mathbb{R}^3}\Phi^4(x)\left|\nabla \Phi (x)\right|^2dx.
            \end{aligned}
      \end{equation*}
      So we are done.
\end{proof}

\section {Estimates for the Linear Operator $\mathcal{L}$}\label{sec.new3}

\subsection{Smallness of \texorpdfstring{$\mathcal{L}W_\lambda$ and $\mathcal{L}W_\sigma$}{\$\textbackslash{} mathcal\{L\}W\_\textbackslash{} lambda\$ and \$\textbackslash{} mathcal\{L\}W\_\textbackslash{} sigma\$}}
Recall that $ \mathcal{L}\varphi = -\Delta\varphi-(5+\mu)K(x) W^{4+\mu} \varphi$.
\begin{lemma}\label{lemma.LW_lambda_sigma}
      $\mathcal{L}W_\lambda$ and $\mathcal{L}W_\sigma$ are small in the sense that
      there exists a universal constant $C_0$ which only depends on $K$ and $n$ such that for $(\lambda,\sigma)$ given in \eqref{eq.independent_parameters}, $\mu>0$ small enough, there hold
      \begin{equation}\label{eq.LW_lambda_sigma}
            \begin{split}
                  \sup_{f\in\mathcal{H}_{k,0}} \frac{\left|\left<f,\mathcal{L}W_\lambda\right>\right|}{\left\Vert f\right\Vert_{L^{\infty}\left(\mathbf{B}\right)}}
                  &\leq C_0\mu^{\frac{1}{2}}\left|\ln\mu\right|,\\
                  \sup_{f\in\mathcal{H}_{k,0}} \frac{ \left|\left<f,\mathcal{L}W_\sigma\right>\right|}{\left\Vert f\right\Vert_{L^\infty(\mathbf{B})}}
                  &\leq C_0\mu^{\frac{1}{2}}\left|\ln\mu\right|+C_0\left|L\left(\sigma\right)\right|.
            \end{split}
      \end{equation}
\end{lemma}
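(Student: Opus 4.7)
The backbone of my approach will be the identity
\[
\mathcal{L}W_l = -F_l \qquad (l\in\{\lambda,\sigma\}),
\]
obtained by differentiating $F = \Delta W + K(x) W^{5+\mu}$ in $l$ (using that $K$ and $\Delta$ do not depend on $\lambda,\sigma$). Combined with the orthogonality $\langle f, \Delta W_l\rangle = 0$ built into $\mathcal{H}_{k,0}$, this will give
\[
\langle f, \mathcal{L}W_l\rangle = -(5+\mu)\langle f, K(x) W^{4+\mu} W_l\rangle,
\]
reducing the problem to controlling the right-hand side. The orthogonality is essential here: a crude bound $|\langle f, F_l\rangle| \leq \|f\|_\infty \|F_l\|_{L^1}$ obtained from a differentiated $F$-estimate would fall short by a factor of $\mu^{1/2}$.

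Next, I will use the $k$-fold rotational symmetry of $f, W, K$ to reduce the pairing to $k\int_{\Omega_0} f K(x) W^{4+\mu} W_l\,dx$, and decompose $W = U_0 + R$, $W_l = (U_0)_l + R_l$ on $\Omega_0$. This produces: (a) a principal term $K(x) U_0^{4+\mu}(U_0)_l$; (b) a cross term $(4+\mu) K(x) U_0^{3+\mu}(U_0)_l R$; (c) a term $K(x) U_0^{4+\mu} R_l$; and (d) higher-order remainders. For (a) I will invoke the equations
\[
-\Delta(U_0)_\lambda = 5K(r) U_0^4 (U_0)_\lambda, \quad -\Delta(U_0)_\sigma = 5K(r) U_0^4 (U_0)_\sigma - \mu^{1/2} K'(r) U_0^5,
\]
obtained by differentiating $-\Delta U_0 = K(r) U_0^5$ in $\lambda$ and $\sigma$ (using $r_\lambda = 0$, $r_\sigma = -\mu^{1/2}$), together with the analogous equations for the images $U_{i^*}$. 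After approximating $K(x)U_0^\mu \approx K(r)$ (error $O(\mu d_0|\ln\mu|)$), extending the integration from $\Omega_0$ to $\mathbf{B}$ (the error decays fast away from the peak), summing the $k$ rotational copies, and applying the orthogonality $\langle f,\Delta W_l\rangle = 0$, the leading part of (a) cancels. What remains is an $O(\mu^{1/2})\|f\|_\infty$ contribution (in the $\sigma$ case) from the extra $\mu^{1/2} K'(r) U_0^5$ piece, and boundary-layer integrals of $f U_{0^*}^4(U_{0^*})_l$ which, by $L^\infty$-duality on a region of volume $O(\mu^{3/2})$ per peak, amount to $O(\mu)\|f\|_\infty$ in total.

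The cross term (b) will be the source of the $|L(\sigma)|$ correction. Using $R(x) = R(r\mathbf{m}_0) + O(\mu^{1/2}|x-r\mathbf{m}_0|)$ combined with $R(r\mathbf{m}_0) = \tfrac{1}{2}\Phi(0) K^{-1/4}(r) L(\sigma) + O(\mu^{1/2})$ from \eqref{eq.E_x_0_and_nabla_E_x_0}, (b) factors at leading order as $R(r\mathbf{m}_0)\int_{\Omega_0} f K(x) U_0^{3+\mu}(U_0)_l\,dx$. After an $L^\infty$ estimate on the $y$-rescaled integral (using $|f(r\mathbf{m}_0 + \varepsilon y) - f(r\mathbf{m}_0)| \leq 2\|f\|_\infty$ and the oddness of the leading $(y\cdot\mathbf{m}_0)/d_0^2$ profile of $(U_0)_\sigma$), the inner integral is $O(\mu)\|f\|_\infty$ when $l=\lambda$ (reflecting $(U_0)_\lambda \sim U_0/\lambda$) but only $O(\mu^{1/2})\|f\|_\infty$ when $l=\sigma$ (reflecting $(U_0)_\sigma \sim U_0/\mu^{1/2}$). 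Multiplying by $k \sim \mu^{-1/2}$ yields $O(\mu^{1/2}|L(\sigma)|)\|f\|_\infty$ for $\lambda$ (absorbed in $\mu^{1/2}|\ln\mu|$) and $O(|L(\sigma)|)\|f\|_\infty$ for $\sigma$, supplying the claimed $|L(\sigma)|$ piece. Finally, items (c), (d) and the $K(x)U_0^\mu - K(r)$ remainder are handled pointwise via the $R$-estimates and Lemma \ref{lemma.W_norm_estimates}, each contributing at most $O(\mu^{1/2}|\ln\mu|)\|f\|_\infty$.

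The hardest part will be the careful bookkeeping: arranging cancellations so that only $\mu^{1/2}|\ln\mu|$ and (for $l=\sigma$) $|L(\sigma)|$ survive, and verifying that the $L'(\sigma)$-type contributions that formally appear on differentiating $F$ in $\sigma$ are either absorbed by the orthogonality or multiplied by a coefficient that renders them harmless.
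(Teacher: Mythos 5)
Your proposal is essentially correct and ultimately lands on the same family of integral estimates as the paper, but it is organized differently. The paper never invokes the orthogonality constraints defining $\mathcal{H}_{k,0}$: it writes $\langle f,\mathcal{L}W_t\rangle=-\langle f,F_t\rangle$, bounds this by $\|f\|_{L^\infty}\,k\int_{\Omega_0}|F_t|\,dx$, and decomposes $F_t=g_1+\cdots+g_5$ so that the dangerous flat term $5K(r)U_0^4(U_0)_t$ coming from $\Delta W_t$ is cancelled pointwise against the corresponding piece of $(5+\mu)K(x)W^{4+\mu}(U_0)_t$; what is left is exactly your list: the $r_t$-terms (of size $O(\mu^{1/2})$, and only present for $t=\sigma$ since $r_\lambda=0$), the image and other-peak terms ($O(\mu)$), the $W^{4+\mu}R_t$ term, the $K(x)W^{\mu}-K(r)$ term ($O(\mu^{1/2}|\ln\mu|)$), and the cross term $5K(r)(W^4-U_0^4)(U_0)_t$, whose $U_0^3R\,(U_0)_\sigma$ part produces the $|E(r\mathbf{m}_0)|\sim|L(\sigma)|$ contribution. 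You instead kill $\langle f,\Delta W_l\rangle$ by orthogonality and then reinstate the same identity a second time to cancel your principal term (a); this is algebraically equivalent, and your size bookkeeping ($O(\mu)$ image terms, $O(\mu^{1/2})$ from the $\mu^{1/2}K'(r)U_0^5$ piece when $l=\sigma$, $|L(\sigma)|$ only from the cross term paired with $(U_0)_\sigma$, $O(\mu^{1/2}|\ln\mu|)$ for the remaining pieces) matches the paper's. The one claim you should retract is that the orthogonality is \emph{essential}: the paper's proof shows the crude duality bound $\|f\|_{L^\infty}\|F_t\|_{L^1}$ does suffice, provided $\|F_t\|_{L^1}$ is computed through this decomposition rather than by differentiating the pointwise bound \eqref{eq.F_estimates} (which is indeed where the loss of $\mu^{1/2}$ you mention would come from); in particular, the estimate of the lemma is valid for every $f\in\mathcal{H}_k$, not only for $f\in\mathcal{H}_{k,0}$. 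Also, the oddness argument you invoke for $(U_0)_\sigma$ in term (b) is unnecessary: the straightforward bound $\int_{\Omega_0}U_0^3|(U_0)_\sigma|\,dx=O(\mu^{1/2})$ already gives the stated $|L(\sigma)|$ term after multiplying by $R(r\mathbf{m}_0)$ and $k$.
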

\begin{proof}
      Without loss of generality, we can assume that $\left\Vert f\right\Vert_{L^\infty(\mathbf{B})}=1$. For $t=\lambda$ or $t=\sigma$, by symmetry,
      \begin{equation*}
           \left<f,\mathcal{L}W_t \right>=-\int_{\mathbf{B}}f{\left(\Delta W+K(x)W^{5+\mu}\right)}_t dx=-\mu^{-\frac{1}{2}}\int_{\Omega_0}fF_t dx \leq C\int_{\Omega_0}\left|F_t\right|\mu^{-\frac{1}{2}}dx.
      \end{equation*}
      Note that
      \begin{equation*}
            \begin{aligned}
                  F_t & = \sum_{i=0}^{k-1}{\left[K(r)\left(r^2U_{i^*}^{5}-U_i^{5}\right)\right]}_t+(5+\mu)K(x)W^{4+\mu}\bigl[{(U_{0})}_{t}+R_t\bigr] \\
                  & =:  g_1+g_2+g_3+g_4+g_5,
            \end{aligned}
      \end{equation*}
      where
      \begin{equation*}
            \begin{aligned}
                  g_1 & = \sum_{i=0}^{k-1}\left[\left(r^2K'(r)+2rK(r)\right)U_{i^*}^5-K'(r)U_i^5\right]r_t , \\
                  g_2 & =\sum_{i=0}^{k-1}5K\left(r\right)r^2U_{i^*}^4(U_{i^*})_{t}-\sum_{i=1}^{k-1}5K(r)U_i^4 (U_{i}){t},  \\
                  g_3 & =\left(5+\mu\right)K\left(x\right)W^{4+\mu}R_t,\\
                  g_4 & =5\Bigl(K(x)\frac{(5+\mu)}{5}W^\mu-K(r)\Bigr)W^4 (U_{0})_{t},\\
                  g_5 & =5K\left(r\right)\left(W^4-U_0^4\right)(U_{0})_{t}.
            \end{aligned}
      \end{equation*}
      We estimate each term as follows.\vspace{5pt}
      
      Since $\partial_{\lambda}r=0$ and $\partial_{\sigma}r=-\mu^\frac{1}{2}$, we obtain
      \begin{equation*}
            \begin{split}
                  \int_{\Omega_0}|g_1|\mu^{-\frac{1}{2}}dx
                  &\leq C \mu^\frac{1}{2}\sum_{i=0}^{k-1}\int_{\Omega_0}U_i^5\varepsilon^{-\frac{1}{2}}dx\\
                  &\leq C\mu^\frac{1}{2}\sum_{i=0}^{k-1}\int_{\Omega_i}U_0^5\varepsilon^{-\frac{1}{2}}dx\\
                  &\leq C \mu^\frac{1}{2}\int_{\mathbb{R}^3}\Phi^5(y)dy=O\bigl(\mu^\frac{1}{2}\bigr).
            \end{split}
      \end{equation*}
      
      Since $\left|(U_{i^*})_{t}\right|=O(U_{i^*})$ for $x\in \mathbf{B}$; and $|(U_{i})_{t}|=O(U_{i^*})$ for $x\not\in \Omega_i$, by symmetry and \eqref{eq.d_i_d_i_star_estimates},
      \begin{equation*}
            \int_{\Omega_0}|g_2|\mu^{-\frac{1}{2}}dx \leq C\sum_{i=0}^{k-1}\int_{\Omega_0}\frac{1}{d_{i^*}^5}\frac{dx}{\varepsilon^3} \leq C\int_{\mathbf{B}}\frac{1}{d_{0^*}^5}\frac{dx}{\varepsilon^3}=O(\mu).
      \end{equation*}
      In the following, we will use \eqref{eq.d_i_d_i_star_estimates} frequently.
      Set $\theta=\frac{5}{8}$ and using \eqref{eq.W_d_theta_mu}, \eqref{eq.R_estimates_parameters}, 
      and $W^{\mu}=O(1)$ we get
      \begin{equation*}
            \begin{aligned}
                  \int_{\Omega_0}|g_3|\mu^{-\frac{1}{2}}dx ={}&
                  O(1)\times\int_{\Omega_0}\Bigl(\frac{\mu^{2(1-\theta)}}{d_{0^*}^{4\theta}}+\frac{1}{d_0^4} \Bigr)\left(\frac{1}{d_{0^*}}+\sum_{i=1}^{k-1}\frac{\mu^{-\frac{1}{2}}}{d_{i^*}^2}\right)\frac{dx}{\varepsilon^3} \\
                  ={}& O\bigl(\mu^{2(1-\theta)}\bigr)\times\Bigl(\frac{1}{d_{0^*}^{4\theta-2}(r\mathbf{m}_0)}+\sum_{i=1}^{k-1}\frac{\mu^{-\frac{1}{2}}}{d_{i^*}^{4\theta-1}(r\mathbf{m}_0)}\Bigr)\\
                  & +O(1)\times\Bigl(\frac{1}{d_{0^*}(r\mathbf{m}_0)}+\sum_{i=1}^{k-1}\frac{\mu^{-\frac{1}{2}}}{d_{i^*}^2(r\mathbf{m}_0)}\Bigr) \\
                  ={}&O(\mu)+O\bigl(\mu^\frac{1}{2}\bigr)=O\bigl(\mu^\frac{1}{2}\bigr).
            \end{aligned}
\end{equation*}
      
      Using $K(x)\frac{(5+\mu)}{5}W^\mu-K(r)=O(\mu d_0\left|\ln \mu\right|)$, \eqref{eq.W_d_theta_mu} with $\theta=\frac{3}{4}$ and $(U_{0})_{t}=\frac{O(U_0)}{\mu^\frac{1}{2}d_0}=O\left(\frac{1}{\mu d_0^2}\right)$ we obtain
      \begin{equation*}
            \int_{\Omega_0}\left|g_4\right|\mu^{-\frac{1}{2}}dx \leq C\int_{\Omega_0}\mu d_0\Bigl(\frac{1}{d_0^4}+\frac{\mu^\frac{1}{2}}{d_{0^*}^3}\Bigr)\frac{1}{\mu d_0^2}\frac{\left|\ln \mu\right|dx}{\varepsilon^3}=O\left(\mu^\frac{1}{2}\left|\ln \mu\right|\right).
      \end{equation*}
      
      Finally we estimate $g_5$. By consider case $U_0>|R|$ and $U_0\leq |R|$ we can show that
      \begin{equation*}
            W^4-U_0^4 \leq C\left(U_0^3\left|R\right|+R^4\right) \leq C{\left(\max\left\{U_0,|R|\right\}\right)}^3\left|R\right|.
      \end{equation*}
      Taking for $\theta=\frac{5}{8}$ in \eqref{eq.epsilon_R_theta_estimates} and using $(U_{0})_{t}=O\left(\frac{1}{\mu d_0^2}\right)$ we get
      \begin{equation*}
            \int_{\Omega_0}|R|^4|(U_{0})_{t}|\mu^{-\frac{1}{2}}dx=\int_{\Omega_0}\frac{O\bigl(\mu^\frac{1}{4}\bigr)}{d_{0^*}^\frac{5}{2}d_0^2}\frac{dx}{\varepsilon^3}=\frac{O\bigl(\mu^\frac{1}{4}\bigr)}{d_{0^*}^\frac{3}{2}(r\mathbf{m}_0)}=O(\mu).
      \end{equation*}
      (i) Suppose $t=\lambda$. Using $(U_{0})_{\lambda}=O(U_0)$ and \eqref{eq.R_estimates_parameters} we have
      \begin{equation*}
            \int_{\Omega_0}U_0^3|(U_{0})_{\lambda}R|\mu^{-\frac{1}{2}}dx \leq C\int_{\Omega_0}\frac{1}{d_0^4}\Bigl(\frac{1}{d_{0^*}}+\sum_{i=1}^{k-1}\frac{\mu^{-\frac{1}{2}}}{d_{i^*}^2}\Bigr)\frac{dx}{\varepsilon^3}=O\bigl(\mu^\frac{1}{2}\bigr).
      \end{equation*}
      (ii) Suppose $t=\sigma$. Using \eqref{eq.R_estimates_x}, we expand
      \begin{equation*}
            \begin{aligned}
                  U_0^3|R| |(U_{0})_{\sigma}| & \leq C\frac{1}{d_0^3}\Bigl[
                        {|\mu^{-2}R(r\mathbf{m}_0)|}+\| \mu^{-2}\nabla R\|_{L^\infty}\mu d_0
                  \Bigr]\frac{\mu^{-\frac{1}{2}}}{d_0^2} \\
        & \leq \frac{C\left|E(r\mathbf{m}_0)\right|}{d_0^5}+\frac{C\mu^\frac{1}{2}}{d_0^4}.
            \end{aligned}
      \end{equation*}
      Then we derive
      \begin{equation*}
            \begin{aligned}
                  \int_{\Omega_0}|g_5|\mu^{-\frac{1}{2}}dx
                   & \leq C|E(r\mathbf{m}_0)|\int_{\Omega_0}\frac{1}{d_0^5}\frac{dx}{\varepsilon^3}+C\mu^\frac{1}{2}\int_{\Omega_0}\frac{1}{d_0^4}\frac{dx}{\varepsilon^3}+O\bigl(\mu^\frac{1}{2}\bigr) \\
                   & \leq C\bigl(\left|E(r\mathbf{m}_0)\right|+\mu^\frac{1}{2}\bigr).
            \end{aligned}
      \end{equation*}
      Finally, applying the estimate of $|E(r\mathbf{m}_0)|$ in \eqref{eq.E_x_0_and_nabla_E_x_0}, the proof is finished.
\end{proof}

\subsection{The constrained linearized problem}

To solve \eqref{eq.c_1_c_2_equation}, we first investigate the existence of the constrained linear equation: given $f\in \mathcal{H}_k$,
finding $(c_1,c_2,\varphi)\in \mathbb{R}\times\mathbb{R}\times\mathcal{H}_{k,0}$
such that
\begin{equation}\label{eq3.1}
      \mathcal{L}\varphi=f+c_1\Delta W_\lambda+c_2\Delta W_\sigma ~\text{in}~ \mathbf{B}.
\end{equation}

\begin{proposition}\label{th.2}
      There exists a positive constant $\mu_0$ such that for each $0 < \mu \leq \mu_0$ and any $(\lambda,\sigma)$ in \eqref{eq.independent_parameters},
      problem (\ref{eq3.1}) with $f \in \mathcal{H}_k$ admits a unique solution. Moreover, for each $\rho \in \left[\frac{1}{2},1\right)$,
      there exists a constant $C(K,\rho)$ such that the solution satisfies
      \begin{equation}\label{eq3.2}
            \left\Vert \varphi \right\Vert_\rho +\mu^{-\frac{1}{2}}|c_1|+ \mu^{-1}|c_2|
            \leq C(K,\rho) \mu^{2}\left\Vert f\right\Vert_{\rho+2}.
      \end{equation}
\end{proposition}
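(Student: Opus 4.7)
The plan is to apply the standard Lyapunov--Schmidt scheme: first establish an a priori weighted $L^\infty$ bound of the form \eqref{eq3.2}, then deduce existence and uniqueness by the Fredholm alternative. As a preliminary step, I test \eqref{eq3.1} against $W_\lambda$ and $W_\sigma$; using that $\mathcal{L}$ is $L^2$-self-adjoint one obtains a $2\times 2$ linear system whose coefficient matrix $(\langle\nabla W_l,\nabla W_t\rangle)_{l,t}$ is invertible by Lemma~\ref{lemma.M_estimates}, and whose right-hand side involves $\langle f,W_l\rangle$ and $\langle\varphi,\mathcal{L}W_l\rangle$. Using the pointwise bounds $|W_\lambda|\leq C\mu^{-1/2}\omega_1$ and $|W_\sigma|\leq C\mu^{-1}\omega_2$, the moment estimate $\int\omega_j\omega_{\rho+2}\,dx=O(\mu^{5/2})$ for $j=1,2$, Lemma~\ref{lemma.LW_lambda_sigma}, and the fact that $\omega_\rho$ is uniformly bounded on $\mathbf{B}$ when $\rho\geq 1/2$ (so that $\|\varphi\|_\infty\leq C\|\varphi\|_\rho$), solving the system yields
\[
\mu^{-1/2}|c_1|+\mu^{-1}|c_2|\leq C\mu^2\|f\|_{\rho+2}+C\mu^{1/2}|\ln\mu|\,\|\varphi\|_\rho.
\]
Hence \eqref{eq3.2} reduces to establishing $\|\varphi\|_\rho\leq C\mu^2\|f\|_{\rho+2}$.

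I would prove this remaining weighted bound by contradiction. Assume sequences $\mu_n\to 0^+$, $(\lambda_n,\sigma_n)$ in \eqref{eq.independent_parameters}, and $(c_{1,n},c_{2,n},\varphi_n,f_n)$ solving \eqref{eq3.1} with $\|\varphi_n\|_\rho=1$ and $\mu_n^2\|f_n\|_{\rho+2}\to 0$; the previous step then forces $\mu_n^{-1/2}|c_{1,n}|+\mu_n^{-1}|c_{2,n}|\to 0$. Pick $x_n$ with $|\varphi_n(x_n)|/\omega_\rho(x_n)\geq 1/2$; by the $\mathcal{H}_k$-symmetry we may take $x_n\in\overline{\Omega}_0$. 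In the \emph{peak case} $d_0(x_n)=O(1)$, rescale $\psi_n(y):=\varphi_n(r_n\mathbf{m}_0+\varepsilon_n y)$; then $\psi_n$ is uniformly bounded and its rescaled equation has right-hand side converging on compact sets to $5\Phi^4\psi$, so up to a subsequence $\psi_n\to\psi$ in $C^1_{\mathrm{loc}}$ with $\psi$ a bounded, power-decaying solution of $-\Delta\psi=5\Phi^4\psi$ in $\mathbb{R}^3$. Its kernel is spanned by $\partial_1\Phi,\partial_2\Phi,\partial_3\Phi$ and $Z_0:=\frac{1}{2}\Phi+y\cdot\nabla\Phi$; the symmetries built into $\mathcal{H}_k$ (the reflection $y_3\to -y_3$ and the discrete rotational symmetry in the $(y_1,y_2)$-plane) eliminate $\partial_2\Phi$ and $\partial_3\Phi$. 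Passing the two orthogonality conditions $\langle\varphi_n,\Delta W_\lambda\rangle=\langle\varphi_n,\Delta W_\sigma\rangle=0$ to the limit, using the leading identities $(U_0)_\lambda\propto\varepsilon_n^{-1/2}Z_0$ and $(U_0)_\sigma\propto\mu_n^{-1}\partial_1\Phi$ from \eqref{eq.U_i_lambda_sigma_estimates} and summing the coherent contributions across all $k$ peaks, yields $\int\Phi^4 Z_0\,\psi\,dy=\int\Phi^4\partial_1\Phi\,\psi\,dy=0$, so $\psi\equiv 0$, contradicting $|\psi_n(y_n)|\geq c>0$. In the complementary \emph{far case} $d_0(x_n)\to\infty$, Green's representation of $\varphi_n$ combined with weighted convolution estimates such as $\int G(x,y)W^4(y)\omega_\rho(y)\,dy\leq C\mu^{1/2}|\ln\mu|\,\omega_\rho(x)$ and $\int G(x,y)\omega_{\rho+2}(y)\,dy\leq C\mu^2\omega_\rho(x)$ (in the spirit of \cite{Hao2015,LiuPeng2016}) delivers $|\varphi_n(x_n)|/\omega_\rho(x_n)=o(1)$, again a contradiction.

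For existence I would rewrite \eqref{eq3.1} as $(I-T)\varphi=\tilde f$ on $\mathcal{H}_{k,0}$, where $T\varphi:=(5+\mu)P_{k,0}(-\Delta)^{-1}[KW^{4+\mu}\varphi]$, $P_{k,0}$ is the $H_0^1$-orthogonal projection onto $\mathcal{H}_{k,0}$, and $\tilde f$ absorbs $f$ together with the two multipliers. Since $W\in L^6(\mathbf{B})$, multiplication by $KW^{4+\mu}$ maps $L^6$ into $L^{6/5}$, so $T$ is compact on $H_0^1(\mathbf{B})$ by Rellich, and $I-T$ is Fredholm of index zero. The a priori estimate just proved gives injectivity, hence bijectivity, and delivers \eqref{eq3.2}. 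The hardest step is the peak-case blow-up argument: precisely tracking the leading order of $\Delta W_\lambda$ and $\Delta W_\sigma$ in the rescaled variable while summing coherent contributions over all $k$ peaks, and then matching the two orthogonality constraints to the $Z_0$ and $\partial_1\Phi$ directions of the Aubin--Talenti kernel while relying on the discrete symmetries of $\mathcal{H}_k$ to remove the remaining translation modes. The weighted Green's function bounds needed in the far case are technically involved but of a more standard flavour.
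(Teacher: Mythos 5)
Your proposal is correct and follows essentially the same route as the paper: testing \eqref{eq3.1} against $W_\lambda,W_\sigma$ and inverting the matrix of Lemma~\ref{lemma.M_estimates} together with Lemma~\ref{lemma.LW_lambda_sigma} to control $c_1,c_2$, then a blow-up/contradiction argument in which the rescaled limit solves $-\Delta u=5\Phi^4u$, the $\mathcal{H}_k$-symmetries remove $\partial_{z_2}\Phi,\partial_{z_3}\Phi$, the two orthogonality constraints pass to the limit to kill the $Z_0$ and $\partial_{z_1}\Phi$ modes, and existence follows from the Fredholm alternative. The only (cosmetic) difference is that the paper disposes of the far region beforehand via the pointwise inequality \eqref{eq3.5} obtained from the weighted inverse-Laplacian estimates, which forces the maximizing point to lie within $O(\varepsilon)$ of a peak, whereas you treat it as a separate "far case" by Green's representation with the analogous weighted convolution bounds (which, as stated, hold only away from the peaks, precisely where you use them).
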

\begin{proof}
      By Fredholm alternative, we only need to establish the priori estimate (\ref{eq3.2}).
      To this end, we assume that $(c_1,c_2,\varphi,f)\in \mathbb{R} \times \mathbb{R} \times \mathcal{H}_{k,0} \times \mathcal{H}_k$ is a quadruple satisfying (\ref{eq3.1}).
      \par
      Let $\rho \in \left[\frac{1}{2},1\right)$ be fixed. Note from \eqref{eq.omega_rho_omega_rho_star_estimates} that $\omega_\rho\leq C\omega_{1/2}^{2\rho}=O(1)$.\par
      We first estimate $c_1,~c_2$.  Multiplying (\ref{eq3.1}) by vector $\bigl(\lambda W_\lambda,\lambda^{-1}\mu^{-\frac{1}{2}}W_{\sigma}\bigr)^\intercal$ and integrating over $\mathbf{B}$, we obtain
      \begin{equation}
            \label{eq3.3}
            M \bbm \lambda^{-1}\mu^{-\frac{1}{2}}c_1 \vspace{8pt}\\ \lambda \mu^{-1} c_2 \ebm =
             \bbm \lambda \langle f,W_\lambda \rangle - \lambda \langle\varphi,\mathcal{L}W_\lambda \rangle  \vspace{10pt}\\  \lambda^{-1}\mu^\frac{1}{2} \langle f,W_\sigma \rangle - \lambda^{-1}\mu^{\frac{1}{2}} \langle \varphi, \mathcal{L}W_\sigma \rangle\ebm,
      \end{equation}
      where $M$ is defined in \eqref{eq.M_estimates}. Set $V_0=U_0-U_{0^*}$, then
      \begin{align*}
            \langle \varphi,\mathcal{L}W_t \rangle = k \langle \varphi,\mathcal{L}(V_{0})_{t} \rangle \thicksim \mu^{-\frac{1}{2}} \langle \varphi,\mathcal{L}(V_{0})_{t} \rangle.
      \end{align*}Hence, using 
      \begin{align*}
            |(V_{0})_{\lambda}(x)|+\mu^\frac{1}{2}|(V_{0})_{\sigma}(x)|\leq C U_0(x)\leq C \varepsilon^\frac{1}{2}|x-r\mathbf{m}_0|^{-1},
      \end{align*}
      $|f|\leq \Vert f \Vert_{5/2} \omega_{5/2}$ and \eqref{eq.rho_estimates_in_B_omega}, we see that
      \begin{equation*}
            |\langle f,W_\lambda \rangle|+\mu^\frac{1}{2}|\langle f,W_\sigma \rangle| 
            \leq C \mu^2\left\Vert f \right\Vert_\frac{5}{2} \int_{\mathbf{B}}\frac{\omega_\frac{5}{2}(x)}{|x-r\mathbf{m}_0|}\frac{dx}{\varepsilon^3}\leq C \mu^\frac{5}{2}\left\Vert f \right\Vert_\frac{5}{2}.
      \end{equation*}
      Using \eqref{eq.LW_lambda_sigma} and \eqref{eq.f_norm_estimates}, we can claim 
      \begin{equation*}
          |\langle \varphi,\mathcal{L}W_\lambda \rangle|+\mu^\frac{1}{2}|\langle \varphi,\mathcal{L}W_\sigma \rangle| 
            \leq C \mu^\frac{1}{2}\left|\ln \mu\right| \left\Vert \varphi \right\Vert_{L^\infty}
            \leq C \mu^\frac{1}{2}\left|\ln \mu\right| \left\Vert \varphi \right\Vert_\frac{1}{2}.
      \end{equation*}
      Since $M$ in \eqref{eq.M_estimates} has a bounded inverse, we obtain
      \begin{equation}
            \label{eq3.4}
            |c_1|+ \mu^{-\frac{1}{2}}|c_2| \leq C\mu\left|\ln \mu\right| \left\Vert \varphi \right\Vert_\frac{1}{2}+C\mu^\frac{5}{2}\left\Vert f \right\Vert_\frac{5}{2}.
      \end{equation}\par
      Next, we estimate $\varphi$. Using $-\Delta \varphi = (5+\mu)K(x)W^{4+\mu}\varphi + f + c_1 \Delta W_\lambda + c_2 \Delta W_\sigma$, 
      \eqref{eq.inverse_laplacian_estimates}, and Lemma \ref{lemma.W_norm_estimates},
      we obtain, for $2 \theta = \min \{ 1,1-\rho \}$,
      \begin{align}\label{eq.temp_varphi_righthandside}
      \begin{aligned}
      \left|\varphi(x)\right|  \leq & C\mu^2\bigl(\left\Vert W^{4+\mu}\varphi \right\Vert_{\rho+2+\theta}+\left\Vert \Delta W_\lambda \right\Vert_5 |c_1|+\left\Vert \Delta W_\sigma \right\Vert_6 |c_2|\bigr)\omega_{\rho+\theta}(x) \\
                  & +C\mu^2\left\Vert  f \right\Vert_{\rho+2}\omega_\rho (x)   \\
                 \leq {}& C\bigl(\left\Vert\varphi\right\Vert_\rho + \mu^{-\frac{1}{2}}|c_1|+ \mu^{-1}|c_2|\bigr)\omega_{\rho+\theta}(x)+C\mu^2 \left\Vert f \right\Vert_{\rho+2}\omega_\rho (x).
            \end{aligned}
      \end{align}
      Substituting (\ref{eq3.4}) into the right-hand side of \eqref{eq.temp_varphi_righthandside} together with $\omega_{\rho+\theta}<\omega_\rho$ and \eqref{eq.f_norm_estimates}, we derive
      \begin{equation}
            \label{eq3.5}
            \frac{\left|\varphi (x)\right|}{\omega_\rho (x)}\leq
            C \Bigl(\left\Vert \varphi \right\Vert_\rho \frac{\omega_{\rho+\theta}(x)}{\omega_\rho (x)}+\mu^2 \left\Vert  f \right\Vert_{\rho+2} \Bigr), \quad ~\text{for all}~ x\in \overline{\Omega}_0.
      \end{equation}
      Now, we will finish the proof by contradiction. Suppose the priori estimate (\ref{eq3.2}) is not true. Then along a sequence of integer $m \rightarrow +\infty$ combined with $\mu_m :=  \frac{1}{m^2} \rightarrow 0 $, there exist $(\lambda_m,\sigma_m)$ satisfying \eqref{eq.independent_parameters} and $(c_{1,m},c_{2,m},\varphi_m,f_m)\in \mathbb{R}^2\times \mathcal{H}_{k,0}\times\mathcal{H}_k$ such that
      \[\max\left\{\varepsilon_m^\frac{1}{2}\left\Vert\varphi_m \right\Vert_\rho,|c_{1,m}|+\lambda_m \mu_m^{-\frac{1}{2}}|c_{2,m}| \right\}=1,\]
       and 
       \[\varepsilon^\frac{5}{2}_m\left\Vert  f_m \right\Vert_{\rho+2}\rightarrow 0, \quad \text{as}~ m\rightarrow +\infty.\]
       
      By (\ref{eq3.4}),
      $|c_{1,m}|+\lambda_m \mu_m^{-\frac{1}{2}}|c_{2,m}|\rightarrow 0$ as $m\rightarrow +\infty$. Hence, $\varepsilon^\frac{1}{2}_m\left\Vert  \varphi_m \right\Vert_\rho =1$. Consequently,
      \[\varepsilon^\frac{1}{2}_m\left\Vert  \varphi_m \right\Vert_{L^\infty}\leq \left\Vert \omega_\rho \right\Vert_{L^\infty}=O(1).\]
      \par
      Let $z_m\in \overline{\Omega}_0$ be a point such that 
      \[1=\varepsilon^\frac{5}{2}_m\left\Vert  f_m \right\Vert_{\rho+2}=\frac{\left|\varepsilon^\frac{1}{2}_m \varphi_m (z_m)\right|}{\omega_\rho (z_m)}.\]
      By (\ref{eq3.5}), $\omega_\rho (z_m)\leq C\omega_{\rho+\theta}(z_m)$.
      Since $\omega_{\rho+\theta}\leq \Bigl(\min_i {\bigl[1+\frac{|x-r\mathbf{m}_i|}{\varepsilon}\bigr]}^{-\theta}\Bigr)\omega_\rho$, there is a constant $C_0>0$ such that 
      \[\frac{|z_m-r\mathbf{m}_0|}{\varepsilon_m} \leq C_0.\] 
      \par
      Now, we set $u_m(z)=\varepsilon_m^\frac{1}{2}\varphi_m (r\mathbf{m}_0+\varepsilon_m z)$, while $\mathcal{L}_m\varphi_m=f_m + c_{1,m}\Delta W_\lambda + c_{2,m}\Delta W_\sigma$.
      Then using \eqref{eq.R_estimates_parameters} we have, when $|z|<\lambda_m \mu_m^{-\frac{1}{2}}\sigma$,
      \begin{equation*}
            \begin{aligned}
                  - & \Delta_z u_m - (5+\mu_m){\left(\Phi(z)+O\left(\mu_{m}^\frac{1}{2}\right)\right)}^{4+\mu_m}u_m
                  \\[5pt]
                    & = \varepsilon_m^\frac{5}{2}f_m + c_{1,m}\varepsilon_m^\frac{5}{2}\Delta W_\lambda + \lambda_m \mu^{-\frac{1}{2}}c_{2,m} \cdot\varepsilon_m^{\frac{5}{2}}\lambda_m^{-1}\mu^\frac{1}{2}\Delta W_\sigma.
            \end{aligned}
      \end{equation*}
      \par
      Since $\Vert \cdot \Vert_{L^\infty}\leq C \Vert \cdot \Vert_\tau, ~\text{for any}~ \tau \geq \frac{1}{2}$ and up to a subsequence, $u_m$ converges uniformly in any compact subset of $\mathbb{R}^3$ to a bounded solution $u$ of
      \begin{equation}\label{eq3.6}
            -\Delta u(z)-5\Phi^4(z)u(z) = 0 ~\text{in}~ \mathbb{R}^3,\quad\frac{1}{{\left(1+C_0^2\right)}^\frac{\rho}{2}}\leq \max_{|z|\leq C_0} |u(z)|.
      \end{equation}
      \par
       By elliptic theory, we can prove that $u \in D^{1,2}(\mathbb{R}^3)$ and $\|\nabla u\|_{L^{\infty}(\mathbb{R}^3)} = O(|x|^{-2})$.      
      Then, by \cite[Theorem 2.1]{Bartsch_Tobias_Willem_2003}, there are constants $a_0,~a_1,~a_2,~a_3$ such that 
      \[u(z)=a_0\Bigl[\frac{1}{2}\Phi(z)+z\cdot \nabla \Phi(z)\Bigr]+\sum_{i=1}^{3}a_i \Phi_{z_i}(z).\]
      Since $u(z)$ is even with respect to $z_2,~z_3,~a_i=0$ for $i=2,3$. Next, since $\varphi\in\mathcal{H}_{k,0}$ we can use Lebesuge dominated convergence theorem to derive that
      \begin{equation*}
            \int_{\mathbb{R}^3}\Phi^4(z)\Bigl[ \frac{1}{2}\Phi(z)+z\cdot \nabla \Phi(z)\Bigr]u(z)dz=0,\quad\int_{\mathbb{R}^3}\Phi^4(z)\Phi_{z_1}(z)u(z)dz=0.
      \end{equation*}
      This implies that $u=0$, contradicting (\ref{eq3.6}). This contradiction completes the proof.
\end{proof}

\section{Reduction and Existence of Critical Points}\label{sec.3}

\subsection{The constrained nonlinear problem}

Recall that the constrained nonlinear problem \eqref{eq.c_1_c_2_equation} is
\begin{equation*}
      (c_1,c_2,\varphi)\in \mathbb{R}\times \mathbb{R}\times \mathcal{H}_{k,0}, \quad \mathcal{L}\varphi= F+N(\varphi)+c_1\Delta W_\lambda +c_2\Delta W_\sigma ,
\end{equation*}

Here we will apply Proposition \ref{th.2} and the contraction mapping theorem to solve \eqref{eq.c_1_c_2_equation}. 
First we consider the following nonlinear function in \eqref{eq.linear_error}.
\begin{align*}
      N(\varphi)=K(x)\left[\left|W+\varphi\right|^{5+\mu}-W^{5+\mu}-(5+\mu)W^{4+\mu}\varphi\right].
\end{align*}
\begin{lemma}\label{lemma3.1}
      Let $\rho\geq \frac{1}{2}$,
      then for every $\varphi,\psi \in \mathcal{H}_k$, we have
\begin{align}\label{eq3.7}
      \frac{\mu^2\left\Vert N(\varphi)-N(\psi)\right\Vert_{\rho+2}}{\left\Vert\varphi-\psi\right\Vert_{\rho}}  \leq C \mathcal{M}_1,
\end{align}
\begin{align}\label{eq3.8}
      \mu^2\left\Vert N(\varphi)\right\Vert_{\rho+1}\leq C \mathcal{M}_2,
\end{align}
where
\begin{align*}
      \mathcal{M}_1 & =\max\left\{\mu^\frac{1}{2}\left\Vert \varphi\right\Vert_{1},\mu^\frac{1}{2}\left\Vert \psi\right\Vert_{1}
      ,\mu^2\left\Vert\varphi\right\Vert_{1}^{4+\mu},\mu^2\left\Vert\psi\right\Vert_{1}^{4+\mu} \right\},
      \\ \mathcal{M}_2&=\max\left\{\mu^\frac{1}{2}\left\Vert\varphi\right\Vert_{\frac{3}{4}},
      \mu^2\left\Vert\varphi\right\Vert_{\frac{3}{4}}^{4+\mu} \right\}\left\Vert\varphi\right\Vert_\rho.
           \\
\end{align*}

\end{lemma}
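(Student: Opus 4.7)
The proof reduces to pointwise inequalities on $N(\varphi)$ and $N(\varphi)-N(\psi)$ followed by the same weighted-$L^{\infty}$ bookkeeping that underlies Lemma~\ref{lemma.W_norm_estimates}. For the pointwise bounds I will use a Taylor expansion of $t\mapsto K(x)|t|^{5+\mu}$ around $t=W$: the second-order integral remainder produces
\[|N(\varphi)|\leq C\bigl(W^{3+\mu}\varphi^{2}+|\varphi|^{5+\mu}\bigr),\]
and integrating from $W+\psi$ to $W+\varphi$ combined with the mean-value theorem gives
\[|N(\varphi)-N(\psi)|\leq C\bigl[W^{3+\mu}(|\varphi|+|\psi|)+(|\varphi|+|\psi|)^{4+\mu}\bigr]|\varphi-\psi|.\]
Both follow from $|W+s\varphi|^{3+\mu}\leq C(W^{3+\mu}+|\varphi|^{3+\mu})$ together with the elementary estimate $|a^{4+\mu}-b^{4+\mu}|\leq C(|a|^{3+\mu}+|b|^{3+\mu})|a-b|$.

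For the contraction estimate \eqref{eq3.7}, I distribute $|\varphi-\psi|\leq\|\varphi-\psi\|_{\rho}\omega_{\rho}$ and $|\varphi|+|\psi|\leq(\|\varphi\|_{1}+\|\psi\|_{1})\omega_{1}$. The two summands then reduce to checking weighted pointwise inequalities of the form $\mu^{2}W^{3+\mu}\omega_{1}\omega_{\rho}\leq C\mu^{1/2}\omega_{\rho+2}$ and $\omega_{1}^{4+\mu}\omega_{\rho}\leq C\omega_{\rho+2}$. The second is immediate from $d_{i}\geq 1$. The first is handled by the splitting $W=O(\varepsilon^{-1/2}/d_{0})+O(\mu^{-\theta/2}/d_{0^{*}}^{\theta})$ recorded in \eqref{eq.W_d_theta_mu} and the weight-interpolation argument that underlies the proof of Lemma~\ref{lemma.W_norm_estimates}(i); a suitable fixed $\theta$ (essentially the same $\theta=5/8$ used there, adjusted for the exponent $3+\mu$) yields the correct target weight. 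This produces the summands $\mu^{1/2}\|\varphi\|_{1}$ and $\mu^{2}\|\varphi\|_{1}^{4+\mu}$ (and their $\psi$ counterparts) in $\mathcal{M}_{1}$.

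For \eqref{eq3.8} I perform the same distribution but single out exactly one copy of $\varphi$ in the norm $\|\cdot\|_{\rho}$ appearing in $\mathcal{M}_{2}$, measuring every remaining copy in $\|\cdot\|_{3/4}$. The weight $\omega_{1}$ from the difference argument is thereby replaced by $\omega_{3/4}$, which is precisely why the target norm relaxes from $\rho+2$ to $\rho+1$: the required pointwise bounds become $\mu^{2}W^{3+\mu}\omega_{3/4}\omega_{\rho}\leq C\mu^{1/2}\omega_{\rho+1}$ and $\omega_{3/4}^{4+\mu}\omega_{\rho}\leq C\omega_{\rho+1}$, both again verified peak by peak using $d_{i}\geq 1$ and the $W$-splitting.

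The main technical obstacle I anticipate is the weighted estimate for the power $W^{3+\mu}$ rather than $W^{4+\mu}$: Lemma~\ref{lemma.W_norm_estimates}(i) is tailored to the exponent produced by the linearization of $\mathcal{L}$, so its proof must be rerun with a splitting exponent $\theta$ adapted to $3+\mu$, making sure that $3\theta$ still lies in the range for which the tail weight $\omega_{3\theta}$, after multiplication by $\omega_{\rho}$ or $\omega_{\rho+1}$, is controlled by $\omega_{\rho+2}$ or $\omega_{\rho+1}$ respectively. Once this modified weighted-norm ingredient is in place, the remainder of the argument is a direct combination of the Taylor pointwise inequalities above with the definition of $\|\cdot\|_{\rho}$, and no further analytic input is needed.
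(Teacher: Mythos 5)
Your outline follows the paper's own route: a pointwise second-order bound on $N$ (the paper uses the equivalent form $|N(\varphi)-N(\psi)|\le C\max\{W,|\varphi|,|\psi|\}^{3+\mu}(|\varphi|+|\psi|)|\varphi-\psi|$), followed by distributing the weighted norms and absorbing the resulting product of weights into $\omega_{\rho+2}$ for \eqref{eq3.7}, respectively $\omega_{\rho+1}$ for \eqref{eq3.8}. There is, however, one step whose justification as written is not valid: you claim that $\omega_{1}^{4+\mu}\omega_{\rho}\leq C\,\omega_{\rho+2}$ (and, implicitly, its analogue $\omega_{3/4}^{4+\mu}\omega_{\rho}\leq C\,\omega_{\rho+1}$) is ``immediate from $d_i\geq 1$.'' It is not: from $d_i\geq 1$ alone one only gets $\omega_1\leq k\sim\mu^{-1/2}$, and the inequality is in fact sharp in the interior of $\mathbf{B}$ --- at the origin all $d_i\sim\varepsilon^{-1}\sim\mu^{-1}$, so $\omega_1^{4}\omega_{\rho}\sim k^{5}\mu^{4+\rho}\sim\mu^{\rho+\frac32}$ while $\omega_{\rho+2}\sim k\,\mu^{\rho+2}\sim\mu^{\rho+\frac32}$, and the two sides are comparable only because of the specific geometry ($k\sim\mu^{-1/2}$ peaks at mutual distance $\sim\mu^{1/2}$ with scale $\varepsilon\sim\mu$). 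The correct justification is the interpolation estimate \eqref{eq.omega_rho_omega_rho_star_estimates}, which yields $\omega_{1}^{4}\omega_{\rho}\leq C\,\omega_{\rho+2}$ and $\omega_{3/4}^{4}\omega_{\rho}\leq C\,\omega_{\rho+1}$ (the extra factor $\omega^{\mu}=O(1)$ is harmless); this single display is precisely the first line of the paper's proof, so the gap is fixable by citing it, but as stated your reasoning for that step would not stand.

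A second, minor point: the ``technical obstacle'' you anticipate --- rerunning Lemma \ref{lemma.W_norm_estimates}(i) with a splitting exponent $\theta$ adapted to $W^{3+\mu}$ via \eqref{eq.W_d_theta_mu} --- is unnecessary. Since $d_i\leq d_{i^*}$, one has $W\leq C\mu^{-1/2}\omega_{1}$, i.e.\ $\mu^{1/2}\Vert W\Vert_{1}\leq C$, so the $W^{3+\mu}$ factor can simply be measured in the same $\Vert\cdot\Vert_{1}$ norm as $\varphi,\psi$; the whole term then reduces to the same interpolation inequality as the pure power term. This is exactly what the paper does by bundling $W,\varphi,\psi$ into one maximum, and it is both shorter and avoids any dependence on the $\theta=5/8$ bookkeeping of Lemma \ref{lemma.W_norm_estimates}. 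With these two adjustments your argument for \eqref{eq3.7} and \eqref{eq3.8} is correct and coincides with the paper's.
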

\begin{proof}
      By the interpolation \eqref{eq.omega_rho_omega_rho_star_estimates},
      \begin{equation*}
            \omega_{1}^{4}\omega_\rho \leq C\omega_{\rho+2}^{\frac{2\rho-1+4}{2\rho+3}} \leq C \omega_{\rho+2}.
      \end{equation*}
      \par
      Then, we have
      \begin{equation*}
            \begin{aligned}
                   & \frac{\mu^2|N(\varphi)-N(\psi)|}{\Vert \varphi-\psi \Vert_\rho}     \\
                   & \quad \leq C \mu^2\max {\left\{W,\left|\varphi\right|,\left|\psi\right|\right\}}^{3+\mu}\left(\left|\varphi\right|+\left|\psi\right|\right)\frac{|\varphi-\psi|}{\Vert\varphi-\psi \Vert_\rho}       \\
                   & \quad \leq C \max {\left\{ \mu^\frac{1}{2}\left\Vert W \right\Vert_{1},\mu^\frac{1}{2}\left\Vert \varphi \right\Vert_{1},\mu^\frac{1}{2} \left\Vert \psi \right\Vert_{1} \right\}}^{3+\mu}\mu^\frac{1}{2}\left(\left\Vert \varphi \right\Vert_{1}+ \left\Vert \psi \right\Vert_{1}\right) \omega_{1}^{4}\omega_\rho, \\
                   & \quad \leq C \max {\left\{ \mu^\frac{1}{2}\left\Vert W \right\Vert_{1},\mu^\frac{1}{2}\left\Vert \varphi \right\Vert_{1},\mu^\frac{1}{2} \left\Vert \psi \right\Vert_{1} \right\}}^{3+\mu}\mu^\frac{1}{2}\left(\left\Vert \varphi \right\Vert_{1}+ \left\Vert \psi \right\Vert_{1}\right)\omega_{\rho+2}.
            \end{aligned}
      \end{equation*}
      \par
      Since $\mu^\frac{1}{2}\left\Vert W \right\Vert_{1}$ is bounded, (\ref{eq3.7}) holds. The proof of (\ref{eq3.8}) is analogous. This completes the proof.
\end{proof}

Now we are in position to solve the nonlinear problem of \eqref{eq.c_1_c_2_equation}.

\begin{proposition}\label{th.3}
      There exists a small $\mu_0$ such that for every $\mu\in (0, \mu_0]$ and $(\lambda,\sigma)$ in \eqref{eq.independent_parameters}, \eqref{eq.c_1_c_2_equation}
      admits a unique solution $(c_1,c_2,\varphi)$ satisfying
      \begin{gather}
            \label{eq3.10}
            \mu^{-\frac{1}{2}}|c_1|+\mu^{-1}|c_2|+\left\Vert\varphi\right\Vert_\rho
            \leq C\bigl(|L\left(\sigma\right)|+\mu^{\frac{1}{2}}\left|\ln \mu\right| \bigr), ~\text{for all}~\frac{1}{2}\leq \rho < 1,\\[2pt]
            \label{eq3.11}
           \left\Vert \varphi\right\Vert_1
            \leq C\bigl( |L\left(\sigma\right)|+\mu^{\frac{1}{2}}|\ln^2\mu|\bigr).
      \end{gather}
\end{proposition}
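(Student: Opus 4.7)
The plan is to apply Banach's fixed-point theorem. Proposition~\ref{th.2} inverts the constrained linear operator: for each $\varphi \in \mathcal{H}_{k,0}$, there is a unique triple $(c_1(\varphi), c_2(\varphi), T\varphi)$ solving $\mathcal{L}T\varphi = F + N(\varphi) + c_1\Delta W_\lambda + c_2\Delta W_\sigma$. A solution of \eqref{eq.c_1_c_2_equation} is then a fixed point of $T$, and the estimates \eqref{eq3.10}--\eqref{eq3.11} will follow from the linear bounds on $T\varphi$ at this fixed point.

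Set $\delta_\mu := |L(\sigma)| + \mu^{1/2}|\ln\mu|$. I would run the contraction in
$$
\Sigma_\mu := \bigl\{\varphi \in \mathcal{H}_{k,0} : \|\varphi\|_\rho \leq C_* \delta_\mu \text{ for every }\rho \in [\tfrac{1}{2}, 1),\ \|\varphi\|_1 \leq C_*(|L(\sigma)| + \mu^{1/2}|\ln\mu|^2)\bigr\},
$$
with $C_*$ a large absolute constant. The observation driving the whole argument is that for $\varphi \in \Sigma_\mu$, the Lipschitz parameter $\mathcal{M}_1$ of Lemma~\ref{lemma3.1} satisfies $\mathcal{M}_1 = o(1)$: indeed $\mu^{1/2}\|\varphi\|_1 \leq C_*(\mu^{1/2}|L(\sigma)| + \mu|\ln\mu|^2) \to 0$, and similarly $\mu^2 \|\varphi\|_1^{4+\mu} \to 0$, so the nonlinear term can be absorbed.

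For the self-mapping $T(\Sigma_\mu) \subset \Sigma_\mu$ in the norm $\|\cdot\|_\rho$ with $\rho \in [\frac{1}{2}, 1)$, I would combine three ingredients: the error bound $\mu^2 \|F\|_{\rho+2} \leq C\delta_\mu$ (this follows from \eqref{eq.F_estimates} because $\min\{1, 2-\rho\} = 1$ on this range of $\rho$), the nonlinearity estimate $\mu^2\|N(\varphi)\|_{\rho+2} \leq C\mathcal{M}_1 \|\varphi\|_\rho$ from Lemma~\ref{lemma3.1}, and Proposition~\ref{th.2} itself. Together they give
$$
\|T\varphi\|_\rho + \mu^{-1/2}|c_1(\varphi)| + \mu^{-1}|c_2(\varphi)| \leq C(K,\rho)\bigl[\delta_\mu + \mathcal{M}_1\|\varphi\|_\rho\bigr] \leq C_*\delta_\mu,
$$
once $C_*$ is large and $\mu$ is small. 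The contraction property is identical in structure: $F$ cancels and $\mu^2\|N(\varphi_1) - N(\varphi_2)\|_{\rho+2} \leq C\mathcal{M}_1 \|\varphi_1 - \varphi_2\|_\rho = o(1)\|\varphi_1-\varphi_2\|_\rho$.

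The main obstacle, and the origin of the extra $|\ln\mu|$ factor in \eqref{eq3.11}, is the bound on $\|T\varphi\|_1$, since Proposition~\ref{th.2} is only stated for $\rho < 1$. I plan to handle this by bootstrapping on the equation $-\Delta(T\varphi) = (5+\mu)KW^{4+\mu}(T\varphi) + F + N(\varphi) + c_1\Delta W_\lambda + c_2\Delta W_\sigma$ via the inverse-Laplacian estimate \eqref{eq.inverse_laplacian_estimates}. The linear piece $KW^{4+\mu}T\varphi$ is handled through Lemma~\ref{lemma.W_norm_estimates}(i) and the already-established $\|T\varphi\|_\rho$ for $\rho$ close to $1$; the $c_i\Delta W_\ell$ pieces through Lemma~\ref{lemma.W_norm_estimates}(ii) combined with the $c_i$ estimates obtained above; and $F + N(\varphi)$ through the $\rho = 1$ versions of the previous bounds. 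The logarithmic factor arises because pushing the weight exponent up to the threshold $\omega_1$ costs one power of $|\ln\mu|$ in the inverse-Laplacian estimate. Once this bootstrap is in place, Banach's fixed-point theorem yields the unique solution in $\Sigma_\mu$, together with the asserted bounds \eqref{eq3.10}--\eqref{eq3.11}.
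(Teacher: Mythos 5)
Your overall scheme is the same machinery the paper uses: invert the constrained linear problem with Proposition \ref{th.2}, control $N$ with Lemma \ref{lemma3.1}, run Banach's fixed point, and recover the weight-$1$ bound by one bootstrap through \eqref{eq.inverse_laplacian_estimates}, accepting a logarithmic loss at the threshold weight $\omega_1$. The structural difference is that the paper contracts in the large ball $\{\|\varphi\|_{8/9}\leq \mu^{-1/10}\}$ and only afterwards derives \eqref{eq3.10}--\eqref{eq3.11} a posteriori for the fixed point, whereas you build the sharp bounds into the invariant set $\Sigma_\mu$. That variant is workable in principle, but note two costs: (a) uniqueness is then only obtained inside $\Sigma_\mu$ rather than in a much larger set; (b) since the constant $C(K,\rho)$ of Proposition \ref{th.2} (and the $\eta_\rho$-dependent absorption in the weight-$1$ step) is not claimed to be uniform as $\rho\to 1^-$, an invariant set demanding one absolute $C_*$ for \emph{all} $\rho\in[\frac12,1)$ is delicate; it is safer to define $\Sigma_\mu$ with finitely many exponents (say $\rho=\frac12,\frac34$) and get the remaining $\rho$'s a posteriori, as the paper effectively does.

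The genuine gap is in your treatment of $F$ in the weight-$1$ step. You propose to feed ``the $\rho=1$ version'' of \eqref{eq.F_estimates} into the inverse Laplacian and pay one $|\ln\mu|$. But at weight $3$ the bound on $F$ is $\mu^2\|F\|_{3}\leq C\bigl(|L(\sigma)|+\mu^{\frac12}|\ln\mu|\bigr)$, so the logarithmic loss multiplies the \emph{whole} bracket and produces $|L(\sigma)||\ln\mu|+\mu^{\frac12}|\ln\mu|^2$. Since \eqref{eq.independent_parameters} allows $\sigma$ with $|L(\sigma)|$ of order $1$, the term $|L(\sigma)||\ln\mu|$ is not dominated by $C_*\bigl(|L(\sigma)|+\mu^{\frac12}|\ln\mu|^2\bigr)$ for small $\mu$; hence your self-map verification for the $\|\cdot\|_1$ component of $\Sigma_\mu$ fails in that regime, and even a posteriori you would only reach the weaker bound $\|\varphi\|_1\leq C\bigl(|L(\sigma)||\ln\mu|+\mu^{\frac12}|\ln\mu|^2\bigr)$, not \eqref{eq3.11} as stated. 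The missing idea is the paper's splitting $F=G_1+G_2$, where $G_1$ carries the $L(\sigma)$-contribution and is measured in the stronger weight, $\|G_1\|_4\leq C|L(\sigma)|\mu^{-2}$, so that by \eqref{eq.rho_estimates_in_B_omega} (case $\rho>3$) its inverse Laplacian lands in $\omega_1$ with \emph{no} logarithmic loss, while only the genuinely small piece $G_2$, with $\mu^2\|G_2\|_3\leq C\mu^{\frac12}|\ln\mu|$, pays the extra $|\ln\mu|$. With that decomposition inserted (and the $\rho$-uniformity caveat handled), your argument closes and reproduces the proposition.
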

\begin{proof}
We will divide our proof into three steps.

\smallskip
Step 1. Let $\beta=\frac{8}{9}$, $X=\{ f\in\mathcal{H}_k:\left\Vert f \right\Vert_\beta \leq \mu^{-\frac{1}{10}} \}$ and let $\varphi\in X$. By \eqref{eq.f_norm_estimates}, we see that
      \begin{equation}
            \label{eq3.12}
            \left\Vert \varphi \right\Vert_{1}\leq (2\lambda)^{\frac{1}{9}} \mu^{-\frac{1}{9}}\left\Vert \varphi \right\Vert_{\beta}\leq C \mu^{-\frac{1}{10}-\frac{1}{9}}.
      \end{equation}
     Using \eqref{eq.F_estimates} to $F$ and (\ref{eq3.7}) to $N(\varphi)$, we find that
      \begin{equation*}
            \left\Vert F+N(\varphi) \right\Vert_{\beta+2}\leq C \left[\mu^{-2}\left|\ln \mu\right|+\mu^{-\frac{1}{5}-\frac{2}{9}-\frac{3}{2}}\right]=O\left(\mu^{-2}\left|\ln \mu\right|\right).
      \end{equation*}
      Define $(c_1,c_2,\psi)$ in $\mathbb{R}^2\times \mathcal{H}_{k,0}$ as the unique solution of $\mathcal{L}\psi =F+N(\varphi)+c_1\Delta W_\lambda+c_2 \Delta W_\sigma$, given by Proposition \ref{th.2} and
      \begin{equation*}
            \left\Vert \psi \right\Vert_\beta \leq C\left\Vert \mu^{2} 
            \left(F+N(\varphi)\right) \right\Vert_{\beta+2}\leq C\left|\ln \mu\right| \leq \mu^{-\frac{1}{10}},~\text{for}~ \mu ~\text{small}.
      \end{equation*}
       We now define $T:X\to X$ with $\varphi\mapsto \psi$ for $\mu>0$ small. In addition, for any $\varphi_1,\  \varphi_2\in X$, by (\ref{eq3.7}) and (\ref{eq3.12}) we have
      \begin{equation*}
            \left\Vert T(\varphi_1)-T(\varphi_2) \right\Vert_\beta \leq C\mu^2\left\Vert N(\varphi_1)-N(\varphi_2) \right\Vert_{\beta+2}=o(1)\left\Vert  (\varphi_1-\varphi_2) \right\Vert_\beta.
      \end{equation*}
      \par
      Hence, $T$ is a contraction operator for $\mu>0$ small. Consequently, when $\mu>0$ small, the contraction mapping theorem ensures that there exists a unique fixed point of $T$ in $X$, which gives a unique solution, denoted by $(c_1,c_2,\varphi)$, of \eqref{eq.c_1_c_2_equation} in $X$.

      \smallskip Step 2. We apply (\ref{eq3.2}) to $\mathcal{L}\varphi=F+N(\varphi)+c_1\Delta W_\lambda+c_2 \Delta W_\sigma$ with $\rho=\frac{1}{2}$ to obtain
      \begin{equation*}
            \left\Vert \varphi \right\Vert_{\frac{1}{2}}+\mu^{-\frac{1}{2}}|c_1|+\mu^{-1}|c_2| \leq C \mu^2 \bigl(\left\Vert F \right\Vert_\frac{5}{2}+\left\Vert N(\varphi) \right\Vert_{\frac{5}{2}}\bigr).
      \end{equation*}
      \par
      By (\ref{eq3.7}) and (\ref{eq3.12}), $\left\Vert N(\varphi) \right\Vert_\frac{5}{2}=o(1)\mu^{-2}\left\Vert \varphi \right\Vert_\frac{1}{2}$. Thus by \eqref{eq.F_estimates}, we obtain (\ref{eq3.10}) with $\rho=\frac{1}{2}$:
      \begin{equation}\label{eq.estimate_temp_frac_1_2}
            \left\Vert \varphi \right\Vert_{\frac{1}{2}}+\mu^{-\frac{1}{2}}|c_1|+\mu^{-1}|c_2| \leq C \bigl(|L\left(\sigma\right)|+\mu^{\frac{1}{2}}\left|\ln \mu\right| \bigr).
      \end{equation}

      \smallskip Step 3. We notice from \eqref{eq.F_estimates} that there exist $G_1$, $G_2$, such that $F=G_1+G_2$ and
      \[\begin{aligned}
            &\left\Vert G_1 \right\Vert_4 \leq C \left(L\left(\sigma\right)\mu^{-2}\right)z,\quad \left\Vert  G_2 \right\Vert_{\rho+2} \leq C \mu^{\frac{\min\{1,2-\rho \}}{2}-2}\left|\ln \mu\right|,~\text{for}~ \rho\in[0,3).
            \end{aligned}\]
      Hence, from
      \begin{equation*}
            -\Delta \varphi=\left(5+\mu\right) K(x) W^{4+\mu}\varphi + F_1 + F_2 + N(\varphi)+ c_1\Delta W_\lambda + c_2\Delta W_\sigma,
      \end{equation*}
      With \eqref{eq.f_norm_estimates} and \eqref{eq.inverse_laplacian_estimates}, we obtain, for any $\rho\in\left(\frac{1}{2},1\right]$,
      \begin{equation*}
            \begin{aligned}
                 \left\Vert \varphi\right\Vert_\rho \leq & C \mu^2\Big(\left\Vert F_1\right\Vert_4+\left\Vert F_2 \right\Vert_{\rho+2}(1+\mathbbold{1}_{\{\rho=1\}}\left|\ln \mu\right|)                                                                                          \\
                & +\left\Vert \varphi W^{4+\mu} \right\Vert_{\rho+\frac{5}{2}}+|c_1|\left\Vert \Delta W_\lambda \right\Vert_4+|c_2|\left\Vert \Delta W_\sigma \right\Vert_4+\left\Vert N(\varphi) \right\Vert_{\rho+2+\eta_{\rho}}\Big)                                          \\
                \leq & C\Big( \left|L\left(\sigma\right)\right|+\mu^{\frac{1}{2}}\left|\ln \mu\right|\left(1+\mathbbold{1}_{\{\rho=1\}}\left|\ln \mu\right|\right) \Big)                           \\
                & +C \mu^2 \Big(\left\Vert \varphi W^{4+\mu} \right\Vert_{\frac{1}{2}+3}+|c_1|\left\Vert \Delta W_\lambda \right\Vert_5+|c_2|\left\Vert \Delta W_\sigma \right\Vert_6+\left\Vert N(\varphi) \right\Vert_{\rho+2+\eta_{\rho}}\Big),                                 
            \end{aligned}
      \end{equation*}
      where $\eta_{\rho}>0$ is an appropriate constant.
      Then, by Lemma \ref{lemma.W_norm_estimates} and (\ref{eq3.7}), \eqref{eq.estimate_temp_frac_1_2}, we have 
      \begin{equation*}
            \begin{aligned}
                \left\Vert \varphi\right\Vert_\rho \leq & C\left( \left|L\left(\sigma\right)\right|+\mu^{\frac{1}{2}}\left|\ln \mu\right|\left(1+\mathbbold{1}_{\{\rho=1\}}\left|\ln \mu\right|\right) \right)                                              \\
                & +C\left( \Vert \varphi \Vert_\frac{1}{2}+\mu^{-\frac{1}{2}}|c_1|+\mu^{-1}|c_2| \right)+C \left\Vert\varphi \right\Vert_1 \left(\mu^{\frac{1}{2}-\eta_{\rho}}\left\Vert \varphi \right\Vert_{\rho}\right)\\
               \leq & C\Big( \left|L\left(\sigma\right)\right|+\mu^{\frac{1}{2}}\left|\ln \mu\right|\left(1+\mathbbold{1}_{\{\rho=1\}}\left|\ln \mu\right|\right)\Big) +\left(C_*  \mu^{\frac{1}{2}-\frac{1}{10}-\frac{1}{9}-\eta_{\rho}}\right)\left\Vert \varphi \right\Vert_{\rho}.
            \end{aligned}
          \end{equation*}
     We notice that $C_*>0$ depends only on $\rho$. Hence, by (\ref{eq3.12}), there exist $\eta_{\rho}\in\left(0,\frac{1}{2}\right)$ and $\mu_0 >0$ small such that when $0<\mu \leq \mu_0$, we have $C_*\mu^{\frac{1}{2}-\frac{1}{10}-\frac{1}{9}-\eta_{\rho}}\leq \frac{1}{2}$. Since \eqref{eq.estimate_temp_frac_1_2} holds, this gives \eqref{eq3.10} for $\frac{1}{2}\leq\rho<1$ and \eqref{eq3.11} for $\rho = 1$. This completes the proof.\end{proof}
\subsection{Proof of Theorem \bref{th.1}}

To begin the proof, we need some estimates of energy functional $J$ with respect to parameters $\lambda$, $\sigma$ and $\mu$, whose proof is postponed in Appendix \ref{appendix.energy_estimates}.
\begin{lemma}\label{lemma.energy_estimates}
      Let 
      \begin{align*}
            J(\lambda,\sigma,\mu) := \int_{\mathbf{B}}\Bigl(\frac{1}{2}{\left|\nabla W\right|}^2-\frac{K(x)}{6+\mu}W^{6+\mu} \Bigr).
      \end{align*}
      Then for $(\lambda,\sigma)$ given in \eqref{eq.independent_parameters}
      \begin{equation*}
            \begin{split}
                  \partial_\lambda J(\lambda,\sigma,\mu)&=\lambda^{-2}A_2L\left(\sigma\right)+O\bigl(\mu^\frac{1}{2}\left|\ln\mu\right|\bigr),\\
                  \partial_\sigma J(\lambda,\sigma,\mu)&=K'(1) A_1-\lambda^{-1}A_2L'\left(\sigma\right)+O\bigl(\mu^{\frac{1}{2}}\left|\ln\mu\right|\bigr),\\
                  \partial_{\lambda^2}^2 J(\lambda,\sigma,\mu)&=-2\lambda^{-3}A_2L\left(\sigma\right)+O\bigl(\mu^\frac{1}{2}\left|\ln\mu\right|\bigr),\\
                  \partial_{\sigma\lambda}^2 J(\lambda,\sigma,\mu)&=\lambda^{-2}A_2L'\left(\sigma\right)+O\bigl(\mu^{\frac{1}{2}}\left|\ln\mu\right|\bigr).
            \end{split}
      \end{equation*}
\end{lemma}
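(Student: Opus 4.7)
The plan is to reduce each derivative of $J$ to an integral of the ansatz error $F = \Delta W + K(x)W^{5+\mu}$ --- for which the sharp weighted bound \eqref{eq.F_estimates} is already available --- tested against an explicit derivative of the ansatz, and then extract the leading coefficients by rescaling at the peak centres. The starting point is integration by parts: since $W|_{\partial\mathbf{B}} = 0$ for every $(\lambda,\sigma)$, we also have $W_\lambda, W_\sigma$ vanishing on $\partial\mathbf{B}$, so that
\[
\partial_t J(\lambda,\sigma,\mu) = -\int_{\mathbf{B}} F\,W_t\,dx, \qquad t\in\{\lambda,\sigma\}.
\]
Exploiting the $k$-fold rotational symmetry, I would rewrite this as $-k\int_{\mathbf{B}} F \cdot (V_0)_t\,dx$ with $V_0 = U_0 - U_{0^*}$ and localize near $r\mathbf{m}_0$, using the decomposition of $F$ obtained in the proof of \eqref{eq.F_estimates} together with the explicit expressions for $(U_0)_\lambda$ and $(U_0)_\sigma$ in \eqref{eq.U_i_lambda_sigma_estimates}.

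For $\partial_\lambda J$, the dominant contribution should come from the cross term $(5+\mu)K(x)U_0^{4+\mu} R\cdot (U_0)_\lambda$. After rescaling by $y = (x - r\mathbf{m}_0)/\varepsilon$, one approximates $R(x)$ by $\Phi(0)K(r)^{-1/4} E(r\mathbf{m}_0)$ on the bulk of $U_0$, and combines $E(r\mathbf{m}_0) = \tfrac12 L(\sigma) + O(\mu^{1/2})$ from \eqref{eq.E_x_0_and_nabla_E_x_0} with the definition $A_2 = \tfrac14\int_{\mathbb{R}^3}\Phi(0)\Phi^5\,dy$ in \eqref{eq.A_A_1_A_2} to reproduce the claimed leading term $\lambda^{-2} A_2 L(\sigma)$. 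All remaining pieces of $F$ (the pure-bubble mismatch $K(x)U_0^{5+\mu}-K(r)U_0^5$, the term $\Delta R$, and the nonlinear remainder $F_1$), together with boundary tails from $U_{0^*}$ and the bubbles $U_i$ with $i\neq 0$, will each be bounded by $O(\mu^{1/2}|\ln\mu|)$ via \eqref{eq.F_estimates}, \eqref{eq.R_estimates_parameters}, and Lemma \ref{lemma.W_norm_estimates}.

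For $\partial_\sigma J$ two new ingredients appear. First, $r_\sigma = -\mu^{1/2}$ activates the single-bubble mismatch $K(x) U_0^{5+\mu} - K(r) U_0^5$, producing, via $\int_{\mathbb{R}^3}\Phi^6\,dy = 6A_1$, the boundary-slope contribution $K'(1)A_1$. Second, the cross-term analysis now involves $(U_0)_\sigma$, which by \eqref{eq.U_i_lambda_sigma_estimates} carries a gradient-type kernel; coupling it against $\mathbf{m}_0\cdot\nabla E(r\mathbf{m}_0) = \mu^{-1/2}\bigl(-\tfrac14 L'(\sigma) + O(\mu^{1/2})\bigr)$ from \eqref{eq.E_x_0_and_nabla_E_x_0} produces $-\lambda^{-1}A_2 L'(\sigma)$. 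The two second-order formulas then follow by differentiating the leading expressions once more in $\lambda$, provided one verifies that the remainder terms can absorb a further $\lambda$-differentiation, which is done term by term using \eqref{eq.R_estimates_parameters}, \eqref{eq.R_estimates_x}, and Lemma \ref{lemma.W_norm_estimates}.

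The main obstacle is a uniform bookkeeping one: ensuring that all subleading contributions --- the boundary piece from $U_{0^*}$, the nonlinear remainder $F_1$, the tails from the $i\neq 0$ bubbles, and the $\sigma$-correction $K(r) = 1 - K'(1)\sigma\mu^{1/2} + O(\mu)$ --- aggregate into the advertised $O(\mu^{1/2}|\ln\mu|)$ error, and that this tolerance survives one extra differentiation in $\lambda$. The weighted $L^\infty$ framework with the weights $\omega_\rho, \omega_\rho^*$ developed in Appendix \ref{appendix}, combined with the decay estimates of Section \ref{sec.2}, is the tool I would use to discharge these estimates uniformly in $(\lambda,\sigma)$ over the range \eqref{eq.independent_parameters}.
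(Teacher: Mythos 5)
Your proposal follows essentially the same route as the paper's proof: integration by parts to reduce $\partial_t J$ to $-\int_{\mathbf{B}}F\,W_t\,dx$, symmetry reduction to a single peak, extraction of the leading terms $\lambda^{-2}A_2L(\sigma)$ from the cross term $K(x)U_0^{4+\mu}R\,(U_0)_\lambda$ together with $E(r\mathbf{m}_0)=\tfrac12 L(\sigma)+O(\mu^{1/2})$, and of $K'(1)A_1-\lambda^{-1}A_2L'(\sigma)$ from the $K$-mismatch and the gradient-type kernel in $(U_0)_\sigma$ coupled with $\mathbf{m}_0\cdot\nabla E(r\mathbf{m}_0)$, with all remainders absorbed into $O(\mu^{1/2}|\ln\mu|)$ via \eqref{eq.F_estimates}, \eqref{eq.R_estimates_parameters} and the weighted norms, exactly as in Appendix \ref{appendix.energy_estimates}. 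The treatment of the second-order derivatives (repeat/differentiate the same computation and check the remainders) also matches the paper's brief indication, so the plan is correct and not a genuinely different argument.
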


\begin{proof}[Proof of Theorem \ref{th.1}]
Given $(\lambda,\sigma,\mu)$ satisfying \eqref{eq.independent_parameters} and $0<\mu\leq\mu_0$. Let $(c_1,c_2,\varphi)$ be the solution of \eqref{eq.c_1_c_2_equation} given by
Proposition \ref{th.3}. Now, the key problem is to find $(\lambda,\sigma)$ such that $(c_1,c_2)=(0,0)$. The pair $(c_1,c_2)$ satisfies the following system:
\begin{equation}
      \label{eq3.13}
      M \bbm \lambda^{-1}c_1\vspace{5pt}\\ \lambda\mu^{-\frac{1}{2}} c_2 \ebm = \bbm \lambda\mu^\frac{1}{2} & 0 \vspace{10pt}\\ 0 & \lambda^{-1}\mu \ebm \times
      \bbm  \left< F,W_\lambda\right> +\left<N(\varphi),W_\lambda \right> - \left< \varphi,\mathcal{L}W_\lambda \right>
      \vspace{10pt}\\ \left< F,W_\sigma\right> + \left<N(\varphi),W_\sigma \right> - \left< \varphi,\mathcal{L}W_\sigma \right> \ebm,
\end{equation}
where $M$ is given in Lemma \ref{lemma.M_estimates}. It is deduced by
multiplying \eqref{eq.c_1_c_2_equation} with $(\lambda W_\lambda,\lambda^{-1}\mu^{\frac{1}{2}}W_\sigma)^{\intercal}$ and using integration by parts.
Our aim is to demonstrate that the right-hand side of \eqref{eq3.13} vanishes at some point $(\lambda,\sigma)$ satisfying \eqref{eq.independent_parameters}. 

By direct calculation, we have
\begin{equation*}
      \langle F,W_\lambda \rangle=-\partial_\lambda J(\lambda,\sigma,\mu),\quad \langle F,W_\sigma \rangle=-\partial_\sigma J(\lambda,\sigma,\mu).
\end{equation*}

From (\ref{eq3.10}) we obtain
\begin{equation*}
      \left\Vert\varphi\right\Vert_{L^\infty(\mathbf{B})}
      \leq C\left\Vert\varphi \right\Vert_\frac{1}{2}
      \leq C\bigl( \left|L\left(\sigma\right)\right|+\mu^\frac{1}{2}\left| \ln \mu\right| \bigr).
\end{equation*}

Hence, using Lemma \ref{lemma.LW_lambda_sigma},
\begin{equation*}
      \begin{aligned}
            \langle \varphi,\mathcal{L}W_\lambda \rangle
             & \leq C\mu^\frac{1}{2}\left|\ln \mu\right|\left\Vert\varphi\right\Vert_{L^\infty}=O\left(\mu^\frac{1}{2}\left|\ln \mu\right|\right)
            \\
            \langle \varphi,\mathcal{L}W_\sigma \rangle
             & \leq C{\bigl(\left|L\left(\sigma\right)\right|+\mu^\frac{1}{2}\left|\ln\mu\right|\bigr)}^2
            \leq C L^2\left(\sigma\right)+C\mu^\frac{1}{2}\left|\ln \mu\right|.
      \end{aligned}
\end{equation*}

Since $|{\left(U_0-U_{0^*}\right)}_\lambda|\leq C \mu^\frac{1}{2}|x-r\mathbf{m}_0|^{-1}$ and $|{\left(U_0-U_{0^*}\right)}_\sigma|\leq C\mu\left|x-r\mathbf{m}_0\right|^{-2}$, for all fixed $\rho\in\left[\frac{1}{2},1\right)$, we can use \eqref{eq.rho_estimates_in_B_omega} to claim
\begin{equation*}
      \begin{aligned}
            |\langle N(\varphi),W_\lambda \rangle| & =k |\langle N(\varphi),{(U_0-U_{0^*})}_\lambda \rangle| \\
                               & \leq C\int_{\mathbf{B}}\frac{|N(\varphi)|}{|x-r\mathbf{m}_0|}dx
                        \\
                               & \leq C\left\Vert \mu^2\right\Vert_{\rho+2}
            \int_{\mathbf{B}}\frac{\omega_{\rho+2}(x)}{|x-r\mathbf{m}_0|}\frac{dx}{\varepsilon^2}                \\
            & \leq C\left\Vert \mu^2 \right\Vert_{\rho+2}\omega_\rho(r\mathbf{m}_0)              \\
            & \leq C\left\Vert \varphi \right\Vert_\rho\max\left\{  \mu^\frac{1}{2}\left\Vert\varphi \right\Vert_{1},{\left(\mu^2\left\Vert\varphi \right\Vert_{1}\right)}^{4+\mu}\right\}                  \\
            & \leq C\mu\left|\ln\mu\right|.
      \end{aligned}
\end{equation*}
Similarly, we have \[|\langle N(\varphi),W_\sigma \rangle|\leq C \mu^{-1} \bigl\Vert \mu^\frac{1}{2}\varphi \bigr\Vert_\rho\max\Bigl\{  \mu^\frac{1}{2}\left\Vert\varphi \right\Vert_{\frac{3}{4}},{\bigl(\mu^2\left\Vert\varphi \right\Vert_{\frac{3}{4}}\bigr)}^{4+\mu}\Bigr\}\leq C.\]
By Proposition \ref{th.3}, we have
\begin{equation*}
      \begin{aligned}
            \mu^\frac{1}{2} \left\Vert\varphi\right\Vert_{\rho} & \leq C \mu^\frac{1}{2}\bigl(\left|L\left(\sigma\right)\right|+\mu^\frac{1}{2}\left|\ln\mu\right|\bigr), ~\text{for any}~ \rho\in[\frac{1}{2},1),
            \\
            \mu^\frac{1}{2}\left\Vert\varphi\right\Vert_1      & \leq C\mu^\frac{1}{2}\bigl(\left|L\left(\sigma\right)\right|+\mu^\frac{1}{2}\left|\ln^2\mu\right|\bigr),
      \end{aligned}
\end{equation*}
which give the explicit bounds for $\mu^\frac{1}{2}\left\Vert\varphi\right\Vert_{\frac{1}{2}}$, $\mu^\frac{1}{2}\left\Vert\varphi\right\Vert_{\frac{3}{4}}$ and $\mu^\frac{1}{2}\left\Vert\varphi\right\Vert_{1}$. Then we obtain
\begin{equation*}
      \langle N(\varphi),W_\lambda \rangle =O\bigl(\mu^\frac{1}{2}\bigr),\hspace{2em}
      \langle N(\varphi),W_\lambda \rangle =O\bigl( L^2\left(\sigma\right)+\mu^\frac{1}{2}\left|\ln\mu\right| \bigr).
\end{equation*}
In conclusion, (\ref{eq3.13}) can be written as
\begin{equation}
      \label{eq3.14}
      \begin{aligned}
            M \bbm \lambda^{-1}c_1 \vspace{8pt}\\ \lambda\mu^{-\frac{1}{2}} c_2 \ebm
            = \bbm \mu^\frac{1}{2} & 0  \vspace{10pt}\\ 0 & \lambda^{-2}\mu \ebm T,
      \end{aligned}
\end{equation}
where
\begin{equation*}
      \begin{aligned}
            T = \bbm  -\lambda^{-1}A_2L\left(\sigma\right)+O\bigl(\mu^\frac{1}{2}\left|\ln\mu\right|\bigr)
            \vspace{10pt}\\ -K'\left(1\right) A_1\lambda + A_2L'\left(\sigma\right)+O\bigl( L^2(\sigma)+\mu^\frac{1}{2}\left|\ln\mu\right| \bigr) \ebm. 
      \end{aligned}
\end{equation*}
Since $L(\sigma^*)=0$ and $L'(\sigma^*)>0$, 
we can define a rectangle as follows \[Q :=  \bigl[\lambda^*-\mu^{\frac{1}{6}},\lambda^*+\mu^{\frac{1}{6}}\bigr]
      \times \bigl[\sigma^*-\mu^{\frac{1}{6}},\sigma^*+\mu^{\frac{1}{6}}\bigr].\]
Then, by degree theory, there exists one point in $Q$ at which the right-hand
side of (\ref{eq3.14}) vanishes. Denote one of such points by $\bigl(\hat{\lambda},\hat{\sigma}\bigr)$.
We have $L\left(\hat{\sigma}\right)=O\bigl(\mu^\frac{1}{2}\left|\ln\mu\right|\bigr)$ so $\hat{\sigma}=\sigma^*+O\bigl(\mu^\frac{1}{2}\left|\ln\mu\right|\bigr)$. Then, we have $K'(1)A_1 \hat{\lambda}= A_2 L'(\sigma^*)+O\bigl(\mu^{\frac{1}{2}}\left|\ln \mu\right|\bigr)$ so $\hat{\lambda}=\lambda^*+O\bigl(\mu^{\frac{1}{2}}\left|\ln \mu\right| \bigr)$.
Thus, there exists at least one pair \[\bigl(\hat{\lambda},\hat{\sigma}\bigr)=\left(\lambda^*,\sigma^*\right)+O\bigl(\mu^\frac{1}{2}\left|\ln\mu\right|\bigr)\] at which $c_1=c_2=0$. With $u=W+\varphi$, it provides a solution of $-\Delta u=K(x){\left|u\right|}^{5+\mu}$, so by the maximum principle, $u>0$ in $\mathbf{B}$. Hence, $u$ is a solution of \eqref{eq.basic_equation_n_3}.
\end{proof}

\begin{remark}
From the discussion above, notice that $u=U_0+R+\varphi$ in $\Omega_0$ and
\[\left\Vert R \right\Vert_{L^\infty(\Omega_0)}+\left\Vert\varphi \right\Vert_{L^\infty(\mathbf{B})}\leq C.\]
Under the assumption as Theorem \ref{th.1}, there exists a constant $\mu_0>0$ such that for all $\mu\in(0,\mu_0)$, problem \eqref{eq.basic_equation_n_3} admits a solution of the form
\begin{equation*}
      u(x)=\max_{0\leq i\leq k-1}\varepsilon^{-\frac{1}{2}}\Phi\Bigl(\frac{x-r\mathbf{m}_i}{\varepsilon}\Bigr)+O(1).
\end{equation*}
\end{remark}

\section{Acknowledgements}

Y. Pu is partially supported by the China Scholarship Council (No. 202306140126). He is grateful to Liping Wang for suggesting the problem and for her helpful discussions, and to Dong Ye for his valuable guidance and comments.

\appendix

\section{Some Basic estimates}\label{appendix}

We recall the following estimates in \cite{Hao2015} with the notations defined in Section \ref{subsec.notations}.

\begin{lemma}\label{lemma.rho_and_integrals}
      For each $\ell\in (0,3)$ and $\rho > 3-\ell$, we have
\begin{equation}
      \label{eq.rho_estimates_in_R}
      \int_{\mathbb{R}^3}\frac{1}{|X-Y|^\ell {(1+|Y|)}^\rho}dY=O(1)\times
      \left\{\begin{array}{ll}
            \frac{1}{{(1+|X|)}^{\rho+\ell-3}}   & \text{if}~ \rho<3, \vspace{5pt} \\
            \frac{1}{{(1+|X|)}^\ell}\ln (2+|X|) & \text{if}~ \rho =3, \vspace{5pt}\\
            \frac{1}{{(1+|X|)}^\ell}            & \text{if}~ \rho>3;
      \end{array}\right.
\end{equation}
\begin{equation}\label{eq.rho_estimates_in_B_omega}
      \int_{\mathbf{B}}\frac{\omega_\rho(y)}{|x-y|^\ell}\frac{dy}{\varepsilon^{3-\rho}}=O(1)\times
      \begin{cases}
            \omega_{\rho+\ell-3}(x)    & \text{if}~\rho<3, \vspace{5pt}\\
            \omega_{\ell}\left|\ln \mu\right| (x) & \text{if}~ \rho=3, \vspace{5pt}\\
            \omega_{\ell}(x)           & \text{if}~ \rho>3;
      \end{cases}
\end{equation}
\begin{equation}
      \label{eq.d_i_d_i_star_estimates}
      \int_{\mathbf{B}}\frac{1}{d_{i^*}^\rho (x) d_0^\ell (x)} \frac{dx}{\varepsilon^3}=O(1)\times
      \begin{cases}
            \frac{1}{d_{i^*}^{\rho+\ell-3}}(r\mathbf{m}_0) & \text{if}~ \rho<3, \vspace{5pt}\\
            \frac{1}{d_{i^*}^{\ell}}(r\mathbf{m}_0)\left|\ln \mu\right| & \text{if}~ \rho=3, \vspace{5pt}\\
            \frac{1}{d_{i^*}^{\ell}}(r\mathbf{m}_0)\mu^\frac{\rho-3}{2} & \text{if}~ \rho>3.
      \end{cases}
\end{equation}
\end{lemma}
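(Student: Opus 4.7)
The plan is to reduce the three integral estimates to a single dyadic-decomposition estimate in $\mathbb{R}^3$ by appropriate rescalings, then track the $\varepsilon$-powers carefully.

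\textbf{Step 1: Proof of (\ref{eq.rho_estimates_in_R}).} I would assume $|X|\geq 1$ (the case $|X|\leq 1$ gives a trivial uniform bound under the hypotheses $\ell\in(0,3)$ and $\rho+\ell>3$) and partition
\[
A_1:=\{Y:|Y|\leq |X|/2\},\quad A_2:=\{Y:|Y-X|\leq |X|/2\},\quad A_3:=\mathbb{R}^3\setminus(A_1\cup A_2).
\]
On $A_1$ use $|X-Y|\geq |X|/2$ and pull out $|X|^{-\ell}$; the remaining integral of $(1+|Y|)^{-\rho}$ contributes $|X|^{3-\rho}$, $\log|X|$, or $O(1)$ according as $\rho<3$, $\rho=3$, or $\rho>3$. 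On $A_2$ use $(1+|Y|)\geq |X|/2$ and pull out $|X|^{-\rho}$; the remaining integrand $|X-Y|^{-\ell}$ integrates to $|X|^{3-\ell}$ (since $\ell<3$). On $A_3$ one has $|Y|\sim |X-Y|$, so the integrand is controlled by $|Y|^{-\ell-\rho}$, and integration over $\{|Y|\geq |X|/2\}$ in spherical coordinates yields $|X|^{3-\ell-\rho}$ (using $\rho+\ell>3$). Summing the three contributions, the dominating term produces each of the three cases in (\ref{eq.rho_estimates_in_R}).

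\textbf{Step 2: Proof of (\ref{eq.rho_estimates_in_B_omega}).} By linearity in $\omega_\rho=\sum_i d_i^{-\rho}$, it suffices to bound each summand $\int_{\mathbf{B}}\frac{d_i^{-\rho}(y)}{|x-y|^\ell}\frac{dy}{\varepsilon^{3-\rho}}$. Substituting $y=r\mathbf{m}_i+\varepsilon Y$ gives $d_i(y)\sim 1+|Y|$, $|x-y|=\varepsilon|X_i-Y|$ with $X_i:=(x-r\mathbf{m}_i)/\varepsilon$, and the Jacobian $\varepsilon^3$ combined with $\varepsilon^{-\ell}$ and $\varepsilon^{\rho-3}$ reduces the integral to a constant multiple of $\int_{\mathbb{R}^3}(1+|Y|)^{-\rho}|X_i-Y|^{-\ell}dY$, which Step~1 estimates with $X=X_i$. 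Since $1+|X_i|\sim d_i(x)$, this yields the pointwise bound $d_i(x)^{-(\rho+\ell-3)}$ (resp.\ $d_i(x)^{-\ell}\log(2+|X_i|)$, $d_i(x)^{-\ell}$) for each $i$, and summing over $i$ recovers $\omega_{\rho+\ell-3}(x)$ (resp.\ the other two cases). In the endpoint $\rho=3$, I absorb $\log(2+|X_i|)\leq C|\ln\varepsilon|$ into $|\ln\mu|$ using $|X_i|\leq C\varepsilon^{-1}$ for $x\in\mathbf{B}$.

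\textbf{Step 3: Proof of (\ref{eq.d_i_d_i_star_estimates}).} This is a rescaled version of Step~1 with the two singular factors centered at different points. Substituting $x=r\mathbf{m}_0+\varepsilon Z$ gives $d_0(x)\sim 1+|Z|$ and $d_{i^*}(x)\sim 1+|X^*_i+rZ|$ with $X^*_i:=(r^2\mathbf{m}_0-\mathbf{m}_i)/\varepsilon$. The factor $\varepsilon^3$ from the Jacobian cancels the normalization $dx/\varepsilon^3$, and an affine change $W=-X^*_i-rZ$ (with $r\sim 1$) converts the integral into the template treated in Step~1 with $X=-X^*_i$. Applying (\ref{eq.rho_estimates_in_R}) and observing that $1+|X^*_i|\sim d_{i^*}(r\mathbf{m}_0)$ produces the three cases; the logarithm at $\rho=3$ again becomes $\log(2+|X^*_i|)\leq C|\ln\mu|$ since $|X^*_i|\leq C\varepsilon^{-1}$.

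\textbf{Main obstacle.} The principal bookkeeping difficulty is tracking the $\varepsilon$-powers through Steps~2 and~3: both the Jacobian and the $|x-y|^{-\ell}$ factor contribute nontrivial powers that must be matched exactly against the normalizing factor $\varepsilon^{\rho-3}$ on the left-hand side. Simultaneously one must verify that truncating the rescaled integral at $(\mathbf{B}-r\mathbf{m}_i)/\varepsilon$ rather than $\mathbb{R}^3$ discards only a subleading contribution (here the tail at infinity converges thanks to $\rho+\ell>3$). The endpoint $\rho=3$ is the most delicate: the logarithmic divergence arises exactly from the marginally convergent annular slice of $A_1$, and must be estimated against the ambient scale to give the claimed $|\ln\mu|$ rather than a worse power.
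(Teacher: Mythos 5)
Your Step 1 is the standard three-region argument and is fine, and the overall strategy (rescale each bump to the model integral \eqref{eq.rho_estimates_in_R}) is the natural one; note that the paper itself gives no proof of this lemma but simply quotes it from \cite{Hao2015}. However, your proposal has two concrete gaps. First, in Step 2 the $\varepsilon$-bookkeeping you yourself flag as the main obstacle does not close: after $y=r\mathbf{m}_i+\varepsilon Y$ the factors are $\varepsilon^{3}\cdot\varepsilon^{-\ell}\cdot\varepsilon^{\rho-3}=\varepsilon^{\rho-\ell}$, which is \emph{not} a constant multiple of $1$ unless $\rho=\ell$, so the reduction to $\int(1+|Y|)^{-\rho}|X_i-Y|^{-\ell}dY$ leaves an uncontrolled prefactor. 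Indeed, with the normalization $\varepsilon^{3-\rho}$ as printed, \eqref{eq.rho_estimates_in_B_omega} is false for $\rho<\ell$: testing a single bump at $x=r\mathbf{m}_0$ gives a left-hand side of order $\varepsilon^{\rho-\ell}\to\infty$ while the right-hand side stays bounded (up to a much milder power coming from the tail sum in $\omega_{\rho+\ell-3}$). The estimate is true, and your rescaling argument works verbatim, once the normalization is read as $\varepsilon^{3-\ell}$ — which is how the paper actually applies it (e.g.\ the use with $\ell=1$ and $dx/\varepsilon^{2}$ in the proof of Theorem \ref{th.1}) — so you should either correct the exponent or restrict to $\rho\ge\ell$ and say so; as written your "cancellation" silently papers over this.

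Second, in Step 3 the case $\rho>3$ of \eqref{eq.d_i_d_i_star_estimates} does not follow from "applying \eqref{eq.rho_estimates_in_R}": that estimate only yields $O\bigl(d_{i^*}^{-\ell}(r\mathbf{m}_0)\bigr)$, whereas the claim carries the additional gain $\mu^{\frac{\rho-3}{2}}$. To get it you must use that $d_{i^*}(x)\ge \sigma\lambda\,\mu^{-\frac12}$ on all of $\overline{\mathbf{B}}$ (because $|rx-\mathbf{m}_i|\ge 1-r=\sigma\mu^{\frac12}$), so in the rescaled variable the weight $(1+|W|)^{-\rho}$ lives on $\{|W|\ge c\mu^{-\frac12}\}$ and its total mass there is $O\bigl(\mu^{\frac{\rho-3}{2}}\bigr)$ when $\rho>3$ (and, in the region $|W+X_i^*|\le|X_i^*|/2$, one uses $|X_i^*|\gtrsim\mu^{-\frac12}$ to convert $|X_i^*|^{3-\rho}$ into $\mu^{\frac{\rho-3}{2}}$). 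The same remark explains why the $\rho=3$ logarithm is $|\ln\mu|$ rather than $\ln(1/\varepsilon)$ alone, though there your crude bound also suffices. Without these two repairs the proposal does not prove the lemma as stated.
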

\begin{lemma}\label{lemma.rho_and_more_estimates}
      For any $\rho\geq \frac{1}{2}$,
      and $\eta\geq 0$, we have
      \begin{align}
            \label{eq.omega_rho_omega_rho_star_estimates}
            \omega_\rho^*=O(1)\times
            \begin{cases}
                  \mu^\frac{\rho}{2}        & \text{if}~\rho>1, \\
                  \mu^\frac{1}{2} \left|\ln \mu\right| & \text{if}~\rho=1, \\
                  \mu^{\rho-\frac{1}{2}}    & \text{if}~\rho<1;
            \end{cases}
            \quad \omega_\rho=O(1)\times
            \begin{cases}
                  \omega_\ell^{\frac{\rho}{\ell}}       & \text{if}~ \ell \leq \rho, \\
                  \omega_\ell^{\frac{2\rho-1}{2\ell-1}} & \text{if}~ \ell> \rho,          \\
                  1                                           & \text{if}~ \rho=\frac{1}{2};
            \end{cases}
      \end{align} 
      and 
      \begin{align}
            \label{eq.f_norm_estimates}
            \frac{1}{2\lambda^\frac{1}{2}} \left\Vert f\right\Vert_{\frac{1}{2}}\leq
            \left\Vert f\right\Vert_{L^\infty (\mathbf{B})}\leq C \left\Vert f\right\Vert_\rho,
            \quad \left\Vert f\right\Vert_\rho\leq\left\Vert f\right\Vert_{\rho+\eta}\leq {\left(\frac{2\lambda}{\mu}\right)}^\eta\left\Vert f\right\Vert_{\rho}.
      \end{align}  
\end{lemma}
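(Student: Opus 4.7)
The lemma has three distinct parts and my plan handles each separately.

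\textbf{Step 1: Decay of $\omega_\rho^*$.} The strategy is to bound each $d_{i^*}(x)$ from below using the geometry of the boundary. Since every $\mathbf{m}_i$ lies on the unit sphere while $x\in\mathbf{B}$, the distance $|rx-\mathbf{m}_i|$ is controlled below by both $1-r|x| \geq 1-r = \sigma\mu^{1/2}$ and by the angular separation between $\mathbf{m}_i$ and $x/|x|$. Combining with $\varepsilon = \lambda^{-1}\mu$ this gives $d_{i^*}(x)^2 \geq 1 + c\mu^{-1} + c\,|i|^2 \mu^{-1}$ for $x\in\Omega_0$, so $d_{i^*}(x)^{-\rho}\leq C\mu^{\rho/2}(1+|i|^2)^{-\rho/2}$. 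Summing over $i$ from $0$ to $k-1$ with $k\sim\mu^{-1/2}$ and comparing with the integral $\int_0^{k}(1+t^2)^{-\rho/2}\,dt$ produces the three regimes: the sum is $O(1)$ when $\rho>1$, yields the logarithmic factor when $\rho=1$, and behaves like $k^{1-\rho}\sim\mu^{(\rho-1)/2}$ when $\rho<1$. Multiplying by the prefactor $\mu^{\rho/2}$ gives the three cases exactly.

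\textbf{Step 2: Comparison inequalities for $\omega_\rho$.} For $\ell \leq \rho$ I use a pointwise convexity/Jensen-type bound: since $d_i\geq 1$, each term $d_i^{-\rho}\leq d_i^{-\ell}$, so $\omega_\rho\leq\omega_\ell$. To obtain the power $\omega_\ell^{\rho/\ell}$ I split into the largest term $d_{i_0}^{-\ell}$ (which dominates $\omega_\ell$ up to a constant, using the same geometric separation as in Step 1) and note $d_{i_0}^{-\rho}=(d_{i_0}^{-\ell})^{\rho/\ell}$; the remaining tail contributes only lower-order factors. For $\ell>\rho$ I apply Hölder's inequality with exponents $(2\ell-1)/(2\rho-1)$ and its conjugate to write $\omega_\rho = \sum d_i^{-\rho} = \sum d_i^{-\rho}\cdot 1$, combining the power constraint $d_i^{-\rho}$ with the fact that at most $O(1)$ indices contribute significantly (again from the angular separation estimate of Step 1). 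The case $\rho=\tfrac12$ reduces to showing $\omega_{1/2}$ is uniformly bounded, which follows from the bound $d_i(x)\gtrsim |i|\mu^{1/2}/\varepsilon = \lambda|i|\mu^{-1/2}$ for $x\in\overline\Omega_0$ and $i\neq 0$, yielding $\sum d_i^{-1/2} \lesssim \mu^{1/4}\sum_{i\leq k} |i|^{-1/2} \lesssim \mu^{1/4} k^{1/2}=O(1)$.

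\textbf{Step 3: Norm comparisons.} The right inequality $\|f\|_{L^\infty}\leq C\|f\|_\rho$ follows immediately from Step 2 with $\ell=\tfrac12$ combined with $\omega_{1/2}=O(1)$. For the lower bound $\tfrac{1}{2\lambda^{1/2}}\|f\|_{1/2}\leq\|f\|_{L^\infty}$, observe that $\omega_{1/2}(x) \geq 1/d_0^{1/2}(x)$ and $d_0\leq (1+\varepsilon^{-2})^{1/2}\leq 2\lambda^{-1}\mu^{-1}$, so $\omega_{1/2}\geq \tfrac{1}{2}\lambda^{1/2}\mu^{1/2}\cdot(\text{something})$... more directly, the factor $2\lambda^{1/2}$ comes from $\omega_{1/2}\leq 2\lambda^{1/2}$, which rearranges the desired inequality. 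The monotonicity $\|f\|_\rho\leq\|f\|_{\rho+\eta}$ follows from $\omega_\rho\geq\omega_{\rho+\eta}$. For the reverse direction, observe $d_i^2\leq 1+4\varepsilon^{-2}\leq(2\lambda/\mu)^2$ for $\mu$ small, so $d_i^\eta\leq(2\lambda/\mu)^\eta$, hence $d_i^{-\rho}\leq(2\lambda/\mu)^\eta d_i^{-\rho-\eta}$ and summing gives $\omega_\rho\leq(2\lambda/\mu)^\eta\omega_{\rho+\eta}$.

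\textbf{Main obstacle.} The real work sits in Step 1 (and equivalently in the sharp version of Step 2): establishing the correct geometric lower bound $d_{i^*}(x)\gtrsim\mu^{-1/2}(1+|i|)$ uniformly for $x\in\overline\Omega_0$ requires decomposing $|rx-\mathbf{m}_i|$ into a radial component (of size $\geq 1-r\sim\mu^{1/2}$) and a transverse component (of size $\sim|i|/k\sim|i|\mu^{1/2}$ coming from the vertex spacing of the regular $k$-gon), and checking the decomposition survives for $x$ ranging over the full fundamental domain $\Omega_0$, not just near $r\mathbf{m}_0$. Since this lemma is quoted essentially verbatim from \cite{Hao2015}, the argument there can be followed line by line with only cosmetic changes reflecting the current normalization of $\varepsilon$, $r$, and $k$.
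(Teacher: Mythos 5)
The paper does not prove this lemma at all: it is quoted verbatim from \cite{Hao2015}, so your sketch stands on its own. Most of it is sound: Step 1 (with the cosmetic correction that for $x\in\overline\Omega_0$ the relevant lower bound is $d_{i^*}\gtrsim\mu^{-1/2}\bigl(1+\min\{i,k-i\}\bigr)$, the cyclic index distance), the case $\ell\le\rho$ of Step 2 (which is in fact immediate from $\sum_i a_i^{p}\le\bigl(\sum_i a_i\bigr)^{p}$ with $p=\rho/\ell\ge1$, no geometry needed), the case $\rho=\tfrac12$, and the monotonicity statements in Step 3. Two steps, however, would fail as written. In Step 2 with $\ell>\rho$, the heuristic that ``at most $O(1)$ indices contribute significantly'' is wrong precisely in the regime that forces the exponent $\tfrac{2\rho-1}{2\ell-1}$: at points far from all peaks (e.g.\ $x=0$) every $d_i\sim\varepsilon^{-1}$ and all $k\sim\mu^{-1/2}$ terms are comparable; there $\omega_\rho\sim\mu^{\rho-1/2}$ while $\omega_\ell^{\rho/\ell}\sim\mu^{\rho-\rho/(2\ell)}$ is much smaller, so the bound $\omega_\rho\lesssim\omega_\ell^{\rho/\ell}$ your argument would yield is false for $\ell>\rho$; and the crude H\"older $\sum_i d_i^{-\rho}\cdot1\le\bigl(\sum_i d_i^{-\ell}\bigr)^{\rho/\ell}k^{1-\rho/\ell}$ fails near a peak, where $\omega_\ell\sim1$. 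The one-line repair, using only ingredients you already have, is to factor $d_i^{-\rho}=\bigl(d_i^{-\ell}\bigr)^{\alpha}\bigl(d_i^{-1/2}\bigr)^{1-\alpha}$ with $\alpha=\tfrac{2\rho-1}{2\ell-1}$ (note $\ell\alpha+\tfrac12(1-\alpha)=\rho$) and apply H\"older with exponents $1/\alpha$, $1/(1-\alpha)$, giving $\omega_\rho\le\omega_\ell^{\alpha}\,\omega_{1/2}^{1-\alpha}\le C\,\omega_\ell^{\alpha}$ by your own bound $\omega_{1/2}=O(1)$.

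The second genuine gap is the first inequality of \eqref{eq.f_norm_estimates}. It is equivalent to the uniform lower bound $\inf_{\mathbf{B}}\omega_{1/2}\ge\tfrac12\lambda^{-1/2}$, and your justification does not produce it: the single term $d_0^{-1/2}$ only gives $\omega_{1/2}\gtrsim\varepsilon^{1/2}\sim\mu^{1/2}$, which is far too small, and ``rearranging'' an upper bound $\omega_{1/2}\le 2\lambda^{1/2}$ goes in the wrong direction (an upper bound on the weight makes $\|f\|_{1/2}=\sup|f|/\omega_{1/2}$ larger, not smaller). The correct mechanism is to sum over all $k$ peaks: since $|x-r\mathbf{m}_i|\le 1+r<2$, one has $d_i\le 2\lambda/\mu$ for $\mu$ small, hence $\omega_{1/2}(x)\ge k\,(\mu/2\lambda)^{1/2}\ge\tfrac12\lambda^{-1/2}$ because $k=\bigl\lfloor\mu^{-1/2}\bigr\rfloor$; it is exactly the factor $k\sim\mu^{-1/2}$ that converts $\varepsilon^{1/2}$ into $\lambda^{-1/2}$. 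With these two repairs (and the same $d_i\le 2\lambda/\mu$ bound justifying the constant in $\|f\|_{\rho+\eta}\le(2\lambda/\mu)^{\eta}\|f\|_{\rho}$, where your chain $1+4\varepsilon^{-2}\le(2\lambda/\mu)^2$ needs the slightly sharper $|x-r\mathbf{m}_i|<2$), your proposal does yield the lemma along essentially the lines of \cite{Hao2015}.
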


Furthermore, using Lemmas \ref{lemma.rho_and_integrals}-\ref{lemma.rho_and_more_estimates}, we get.

\begin{lemma}
      Recall that \[\Omega_i:=\Bigl\{x\in \mathbf{B}:x\cdot \mathbf{m}_i=\max_{0\leq j\leq k-1} (x \cdot \mathbf{m}_j)\Bigr\}, ~\text{for}~ i=0,1,\cdots,k-1.\]
      For $\rho\in \mathbb{R}$, we have
      \begin{align}            \label{eq.d_i_comparing_d_i_star_estimates}
            \begin{array}{cl}
                  \displaystyle \frac{C_0}{\mu^{\frac{1}{2}}}\leq d_i \leq d_{i^*} \leq C_1 d_i, \quad \Bigl|\frac{1}{d_i^\rho}-\frac{1}{d_{i^*}^\rho}\Bigr|\leq C\frac{1}{\mu^{\frac{1}{2}}d_{i^*}^{\rho+1}},
                  &\quad \text{in}~ \overline{\mathbf{B}}\backslash\overline{\Omega}_i;\vspace{8pt}\\
                  \displaystyle  d_i \leq d_{i^*}, \quad \frac{C_2}{\mu^{\frac{1}{2}}}\leq d_{i^*}, & \quad \text{in}~ \overline{\Omega}_i,
            \end{array}
      \end{align}
      where $C_0$, $C_1$, $C_2$ are some constants independent of $(\lambda,\sigma,\mu)$.
\end{lemma}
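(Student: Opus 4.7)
The plan is to reduce everything to the single algebraic identity
\[
      d_{i^*}^2 - d_i^2 \;=\; \frac{(1-r^2)(1-|x|^2)}{\varepsilon^2},
\]
which follows by expanding $|rx-\mathbf{m}_i|^2 = r^2|x|^2-2rx\cdot\mathbf{m}_i+1$ and $|x-r\mathbf{m}_i|^2 = |x|^2-2rx\cdot\mathbf{m}_i+r^2$, since $|\mathbf{m}_i|=1$. Because $r<1$ and $|x|\leq 1$ in $\mathbf{B}$, this identity delivers the monotonicity $d_i\le d_{i^*}$ everywhere in $\overline{\mathbf{B}}$, and in particular on $\overline{\Omega}_i$.

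Next I would derive the two lower bounds. For $d_{i^*}\geq C_2/\mu^{1/2}$ (valid in all of $\overline{\mathbf{B}}$), note that $|rx-\mathbf{m}_i|\geq 1-r|x|\geq 1-r=\sigma\mu^{1/2}$ since $\mathbf{m}_i$ lies on the unit sphere; combined with $\varepsilon=\lambda^{-1}\mu$ this gives $d_{i^*}^2\geq 1+c\lambda^2/\mu$. For $d_i\geq C_0/\mu^{1/2}$ on $\overline{\mathbf{B}}\setminus\overline{\Omega}_i$, I would use the Voronoi geometry: if $x\notin\Omega_i$, then $x\cdot\mathbf{m}_j\geq x\cdot\mathbf{m}_i$ for some $j\neq i$, which (by $|\mathbf{m}_j|=|\mathbf{m}_i|=1$) forces $|x-r\mathbf{m}_i|\geq |x-r\mathbf{m}_j|$, so $|x-r\mathbf{m}_i|$ is at least the distance from $r\mathbf{m}_i$ to the perpendicular bisector of $r\mathbf{m}_i,r\mathbf{m}_{i\pm 1}$, namely $r\sin(\pi/k)\sim\pi/k\sim\pi\mu^{1/2}$ (using $k=\lfloor\mu^{-1/2}\rfloor$). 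Again invoking $\varepsilon=\lambda^{-1}\mu$ gives the claimed lower bound.

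For the upper comparison $d_{i^*}\leq C_1 d_i$ in $\overline{\mathbf{B}}\setminus\overline{\Omega}_i$, I would start from the identity and use the elementary bound $1-|x|\leq (1-r)+|x-r\mathbf{m}_i|$, yielding
\[
      d_{i^*}^2-d_i^2 \;\leq\; \frac{C\mu^{1/2}\bigl(\sigma\mu^{1/2}+|x-r\mathbf{m}_i|\bigr)}{\varepsilon^2}.
\]
The first piece is $O(\lambda^2/\mu)$, which is absorbed by the $+1$ inside $d_i^2$ once we recall the lower bound $d_i^2\geq c\mu^{-1}$ just established; the second piece is bounded by $|x-r\mathbf{m}_i|/\varepsilon$ times $\lambda/\mu^{1/2}$, which, after the elementary splitting $ab\leq a^2+b^2$, is dominated by $|x-r\mathbf{m}_i|^2/\varepsilon^2+\lambda^2/\mu$ and therefore by $C d_i^2$. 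Combining gives $d_{i^*}^2\leq (1+C)d_i^2$.

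Finally, for the quantitative estimate $|d_i^{-\rho}-d_{i^*}^{-\rho}|\leq C\mu^{-1/2}d_{i^*}^{-\rho-1}$, I would apply the mean value theorem to $f(t)=t^{-\rho}$ on $[d_i,d_{i^*}]$, giving a bound of $|\rho|\,(d_{i^*}-d_i)/\xi^{\rho+1}$ for some $\xi\in[d_i,d_{i^*}]$; the comparison $d_i\leq\xi\leq d_{i^*}\leq C_1 d_i$ lets me replace $\xi$ by $d_{i^*}$ up to a constant. It remains to show $d_{i^*}-d_i\leq C\mu^{-1/2}$ on $\overline{\mathbf{B}}\setminus\overline{\Omega}_i$; writing $d_{i^*}-d_i=(d_{i^*}^2-d_i^2)/(d_{i^*}+d_i)\leq (d_{i^*}^2-d_i^2)/d_i$ and feeding in the two previous estimates for the numerator together with $d_i\geq c\mu^{-1/2}$ does it: the $\sigma\mu^{1/2}$ piece gives $C\lambda^2/(\mu\cdot\mu^{-1/2})^{-1}=C/\mu^{1/2}$, and the $|x-r\mathbf{m}_i|$ piece gives $C\mu^{1/2}/\varepsilon\cdot(|x-r\mathbf{m}_i|/(\varepsilon d_i))\leq C\lambda/\mu^{1/2}$.

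The only genuinely delicate step is the upper comparison in the third paragraph: the naive bound $(1-|x|^2)\leq 2$ is far too weak, and the linear decomposition $1-|x|\leq (1-r)+|x-r\mathbf{m}_i|$ must be coupled with the lower bound $d_i\geq c\mu^{-1/2}$ (which itself uses the specific choice $k\sim\mu^{-1/2}$) in order to absorb the gain $\sigma\mu^{1/2}$ into $d_i^2$. Everything else is a direct consequence of the basic algebraic identity.
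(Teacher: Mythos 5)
Your proposal is correct and, since the paper disposes of this lemma with a one-line ``by direct calculation,'' your argument is simply the fleshed-out version of that same elementary computation: the identity $d_{i^*}^2-d_i^2=(1-r^2)(1-|x|^2)/\varepsilon^2$, the Voronoi-type lower bound $|x-r\mathbf{m}_i|\geq r\sin(\pi/k)\gtrsim\mu^{1/2}$ off $\overline{\Omega}_i$ (using $k=\lfloor\mu^{-1/2}\rfloor$), and the mean value theorem combined with $d_{i^*}-d_i\leq(d_{i^*}^2-d_i^2)/d_i\leq C\mu^{-1/2}$. The only blemish is the cosmetic slip ``$C\lambda^2/(\mu\cdot\mu^{-1/2})^{-1}$'' in your final paragraph (misplaced exponent; the quantity you need and in fact obtain is $C\lambda^2\mu^{-1/2}$), which does not affect correctness.
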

\begin{proof}
      By direct calculation, we have $d_i \leq d_{i^*}, \quad \frac{C_2}{\mu^{\frac{1}{2}}}\leq d_{i^*}$ in $\overline{\Omega}_i$ and \[\frac{C_0}{\mu^{\frac{1}{2}}}\leq d_i \leq d_{i^*} \leq C_1 d_i, ~\text{in}~ \overline{\mathbf{B}}\backslash\Omega_i.\]
      Moreover, we have $|d_{i^*}-d_i|\leq \frac{C}{\mu^{\frac{1}{2}}}$ in $\overline{\mathbf{B}}\backslash\Omega_i$. Thus $\Bigl|\frac{1}{d_i^\rho}-\frac{1}{d_{i^*}^\rho}\Bigr|\leq \frac{C}{\mu^{\frac{1}{2}}d_{i^*}^{\rho+1}}$ in $\overline{\mathbf{B}}\backslash\Omega_i$. 
\end{proof}

\begin{lemma}
      For any $\eta>0$, it holds
      \begin{align}\label{eq.inverse_laplacian_estimates}
            \bigl\Vert{(-\Delta)}^{-1}f\bigr\Vert_\rho=O(\mu^2) \times
            \begin{cases}
                  \Vert f \Vert_{\rho+2}          \vspace{5pt}                         & \text{if}~ 0<\rho<1, \\
                  \min \left\{\left\Vert f \right\Vert_3 \left|\ln \mu\right|,\left\Vert f\right\Vert_{3+\eta}\right\} & \text{if}~\rho=0.
            \end{cases},
      \end{align}
      where ${(-\Delta)}^{-1}$ is the inverse of $-\Delta$ subject to the homogeneous Dirichlet boundary condition on $\mathbf{B}$.
\end{lemma}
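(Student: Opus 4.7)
The plan is to invoke the Green's function representation of $(-\Delta)^{-1}$ on the ball and reduce everything to weighted integral estimates of the form already collected in \eqref{eq.rho_estimates_in_R}. On $\mathbf{B}\subset\mathbb{R}^{3}$ the Dirichlet Green's function $G_{\mathbf{B}}(x,y)$ satisfies the universal pointwise bound $0\le G_{\mathbf{B}}(x,y)\le\frac{1}{4\pi|x-y|}$ (direct consequence of the maximum principle, comparing with the whole-space Green's function). Hence if $u=(-\Delta)^{-1}f$, then $|u(x)|\le C\int_{\mathbf{B}}|f(y)|/|x-y|\,dy$, so the whole problem reduces to controlling $\int_{\mathbf{B}}\omega_{s}(y)/|x-y|\,dy$ for $s\in\{\rho+2,\,3,\,3+\eta\}$.

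For the first case $0<\rho<1$, I bound $|f(y)|\le\|f\|_{\rho+2}\,\omega_{\rho+2}(y)$ and treat each peak $i$ separately by the change of variable $y=r\mathbf{m}_{i}+\varepsilon z$: writing $X_{i}:=(x-r\mathbf{m}_{i})/\varepsilon$,
\[
\int_{\mathbf{B}}\frac{d_{i}^{-(\rho+2)}(y)}{|x-y|}\,dy=\varepsilon^{2}\int\frac{dz}{(1+|z|^{2})^{(\rho+2)/2}\,|X_{i}-z|}.
\]
Since $\rho+2<3$, the inner integral is $O\bigl((1+|X_{i}|)^{-\rho}\bigr)=O(d_{i}^{-\rho}(x))$ by \eqref{eq.rho_estimates_in_R}. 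Summing over $i$ yields $\int_{\mathbf{B}}\omega_{\rho+2}(y)/|x-y|\,dy\le C\varepsilon^{2}\omega_{\rho}(x)$. Using $\varepsilon=\lambda^{-1}\mu\sim\mu$ I conclude $|u(x)|\le C\mu^{2}\|f\|_{\rho+2}\,\omega_{\rho}(x)$, which is exactly the first line of \eqref{eq.inverse_laplacian_estimates}.

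For the endpoint case $\rho=0$ the same scheme is applied twice, once with $s=3$ and once with $s=3+\eta$. With $s=3$ the inner $z$-integral is borderline logarithmic: \eqref{eq.rho_estimates_in_R} yields $O\bigl((1+|X_{i}|)^{-1}\ln(2+|X_{i}|)\bigr)$; since $|X_{i}|\lesssim \varepsilon^{-1}\sim\mu^{-1}$ in $\mathbf{B}$, the logarithm is absorbed into a factor $|\ln\mu|$, and summation gives $\int\omega_{3}/|x-y|\,dy\le C\varepsilon^{2}|\ln\mu|\,\omega_{1}(x)$. With $s=3+\eta>3$ the inner integral is absolutely convergent, producing directly $O(1/d_{i}(x))$ without the logarithm, hence $\int\omega_{3+\eta}/|x-y|\,dy\le C\varepsilon^{2}\omega_{1}(x)$. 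In either case, taking the supremum and dividing by $\omega_{0}(x)=k\ge 1$ (in fact $k\sim\mu^{-1/2}$, giving even a little room) gives $\|u\|_{0}\le C\mu^{2}\|f\|_{3}|\ln\mu|$ and $\|u\|_{0}\le C\mu^{2}\|f\|_{3+\eta}$ respectively; the minimum of these two bounds is the second line of \eqref{eq.inverse_laplacian_estimates}.

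The only subtlety is the bookkeeping of the $\varepsilon$-powers (one $\varepsilon^{3}$ from the Jacobian, one $\varepsilon^{-1}$ from $|x-y|$, giving the clean $\varepsilon^{2}=O(\mu^{2})$ factor), together with the uniform boundedness of $\omega_{1}$ despite the sum running over $k\sim\mu^{-1/2}$ terms. The latter follows from the geometry of the configuration already exploited in \eqref{eq.d_i_comparing_d_i_star_estimates}: at most finitely many of the $r\mathbf{m}_{i}$ lie within distance $C\mu^{1/2}$ of any given $x$, and the remaining $k-O(1)$ points contribute $\sum 1/d_{i}\lesssim k\cdot\mu^{1/2}=O(1)$. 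No contraction argument, rescaling limit, or nonlinear estimate is required — the lemma is a purely linear and essentially computational consequence of the weighted integral estimates already recorded in the appendix.
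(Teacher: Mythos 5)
Your proposal is correct and follows essentially the same route as the paper: the paper's proof is the one-line observation that the Dirichlet Green's function is bounded by $C|x-y|^{-1}$ together with the weighted convolution estimate \eqref{eq.rho_estimates_in_B_omega} (with $\ell=1$, yielding the $\varepsilon^{2}=O(\mu^{2})$ factor), which is exactly what you re-derive peak by peak from \eqref{eq.rho_estimates_in_R}. Your extra bookkeeping (the $\varepsilon^{2}$ scaling, the $|\ln\mu|$ absorption at the borderline exponent, and the uniform bound on $\omega_{1}$) is just an unwinding of that appendix estimate, so no substantive difference.
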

\begin{proof}
      Since the Green's function is bounded by $C|x-y|^{-1}$, by \eqref{eq.rho_estimates_in_B_omega}, we get the result as in \cite{Hao2015}.
\end{proof}

\section{Energy estimates}\label{appendix.energy_estimates}
\ \par

Now we show the proof of Lemma \ref{lemma.energy_estimates}. Let $\sigma_0 :=  \min \left\{ 1,\sigma\right\}$. We also introduce
\begin{align}\label{eq.B_def}
      \mathbf{B}_0 :=  \left\{x:|x-r\mathbf{m}_0|<\sigma_0 \mu^\frac{1}{2}\right\}\subset\Omega_0.
\end{align}

\begin{proof}[Proof of Lemma \ref{lemma.energy_estimates}]
\ \vspace{10pt}
\newline
\indent
      For $t=\lambda$ or $t=\sigma$, we have

      \begin{align}\label{eq.partial_J_estimates}
            \begin{aligned}
                  \partial_t J&=\int_{\mathbf{B}}\left[\nabla W \cdot \nabla W_t -K(x)W^{5+\mu}W_t\right]dx\\
                  &=-\int_{\mathbf{B}}F W_t dx=-k\int_{\Omega_0}F\bigl((U_{0})_t+R_t\bigr)dx\\
                  &=-k\int_{\Omega_0}FR_t dx-k\int_{\Omega_0 \backslash \mathbf{B}_0}F (U_{0})_t dx
                  -k\int_{\mathbf{B}_0}F (U_{0})_t dx.            
            \end{aligned}
      \end{align}

      Using \eqref{eq.F_estimates} with $\rho=2$, \eqref{eq.R_estimates_parameters}, \eqref{eq.d_i_d_i_star_estimates}, and \eqref{eq.omega_rho_omega_rho_star_estimates}, we obtain
      \begin{align*}
            k\int_{\Omega_0}FR_t dx=O\biggl(\int_{\Omega_0}\frac{\left|\ln \mu\right|}{d_0^4}\Bigl(\frac{1}{d_{0^*}}+\frac{1}{\mu^\frac{1}{2}}\sum_{i=1}^{k-1}\frac{1}{d_{i^*}^2}\Bigr)\frac{dx}{\varepsilon^3}\biggr)=O\bigl(\mu^\frac{1}{2}\left|\ln \mu\right| \bigr).
      \end{align*}
      Next, since $\varepsilon d_0>\frac{\sigma_0}{k}$ in $\Omega_0\backslash\mathbf{B}_0$, by \eqref{eq.U_i_lambda_sigma_estimates}, we see that $\varepsilon^\frac{1}{2}(U_{0})_t =O(d_0^{-1})$. Hence, using \eqref{eq.F_estimates} with $\rho=2$ we obtain
      \begin{equation*}
            k\int_{\Omega_0\backslash\mathbf{B}_0}F (U_{0})_t dx
            =\int_{\Omega_0\backslash\mathbf{B}_0}\frac{O(1)\left|\ln \mu\right|}{d_0^5}\frac{dx}{\varepsilon^3}
            =O\bigl(\mu\left|\ln \mu\right|\bigr).
      \end{equation*}
      To study the last integral in \eqref{eq.partial_J_estimates}, we use \eqref{eq.F_temp_estimates}. We consider $t=\lambda$ and $t=\sigma$ separately, since in $\mathbf{B}_0$, the estimates $(U_{0})_\lambda=O\bigl(\mu^{-\frac{1}{2}}d_0^{-1}\bigr)$ and $(U_{0})_\sigma=O\bigl(d_0^{-2}\bigr)$ differ by a factor of $\mu^\frac{1}{2}$.
      \par
      Taking $\rho=1$ in \eqref{eq.F_temp_estimates} and using $(U_{0})_\lambda=O\bigl(\mu^{-\frac{1}{2}}d_0^{-1}\bigr)$, we obtain that the following holds for $t=\lambda$.
      \begin{equation}\label{eq.partial_J_estimates_temp}
            \begin{aligned}
                  \partial_t J= & -k\int_{\mathbf{B}_0}\left(K(x)U_0^\mu-K(r)\right)U_0^5 (U_{0})_tdx
                  \\ &+(5+\mu)k \int_{\mathbf{B}_0}K(x)RU_0^{4+\mu}(U_{0})_t dx+O\left(\mu^\frac{1}{2}\left|\ln \mu\right|\right).
            \end{aligned}
      \end{equation}
      Note that by radial symmetry and \eqref{eq.U_i_lambda_sigma_estimates},
      \begin{equation*}
            R^2(r\mathbf{m}_0)\int_{\mathbf{B}_0}U_0^{3+\mu}(U_{0})_\sigma dx=
            \frac{\mu^\frac{1}{2}}{4}\frac{K'(r)}{K(r)}R^2(r\mathbf{m}_0)\int_{\mathbf{B}_0}U_0^{4+\mu}dx
            =O\bigl(\mu^\frac{3}{2}\bigr).
      \end{equation*}
      Hence, taking $\rho=0$ in \eqref{eq.F_temp_estimates} and using $(U_{0})_\sigma=O\bigl(d_0^{-2}\bigr)$,
      we get that \eqref{eq.partial_J_estimates_temp} holds for $t=\sigma$ as well.
      \par
      It remains to evaluate the two integrals in \eqref{eq.partial_J_estimates_temp}. From \eqref{eq.U_i_lambda_sigma_estimates} and the fact
      \begin{align*}
            \nabla U_0=\frac{1-d_0^2}{d_0^2}U_0,
      \end{align*}
we can derive the following:
      \begin{align*}
            \begin{aligned}
                   U_0^5 \partial_\lambda U_{0} & =\frac{1}{6\lambda}\divop\left((x-r\mathbf{m}_0)U_0^6\right),& U_0^4 \partial \lambda U_{0} &=\frac{1}{5\lambda}\divop\left((x-r\mathbf{m}_0)U_0^5\right)-\frac{1}{10\lambda}U_0^5,\\[5pt]
                   U_0^5 \partial_\sigma U_{0} & =\frac{\mu^\frac{1}{2}}{6}\mathbf{m}_0\cdot \nabla U_0^6
                  +\frac{\mu^\frac{1}{2}}{4}\frac{K'(r)}{K(r)}U_0^6, &
                   U_0^4 \partial \sigma U_{0} &=\frac{\mu^\frac{1}{2}}{5}\mathbf{m}_0\cdot \nabla U_0^5
                  +\frac{\mu^\frac{1}{2}}{4}\frac{K'(r)}{K(r)}U_0^5.
            \end{aligned}
      \end{align*}
      Note that $a :=  \left(U_0^\mu-1\right)=O\bigl(\mu^\frac{1}{2} \left|\ln \mu\right| \bigr)$ in $\mathbf{B}_0$, and
      $k := \bigl\lfloor \mu^{-\frac{1}{2}}\bigr\rfloor=O\bigl(\mu^{-\frac{1}{2}}\bigr)$. 
      First, we calculate
      \begin{align*}
            \begin{aligned}
            & k\int_{\mathbf{B}_0}  \left(K(x)U_0^\mu-K(r)\right)U_0^{5+\mu} (U_{0})_\lambda dx  \\
            &\quad =-(a+1)\frac{k}{6\lambda}\int_{\mathbf{B}_0}\left[\left(K(x)-K\left(r\right)\right)+aK(x)\right]\divop\left(\left(x-r\mathbf{m}_0\right)U_0^6\right)dx \\
            &\quad =O(1)\times\biggl[\mu^\frac{1}{2}\left|\ln \mu\right|\int_{\partial \mathbf{B}_0}U_0^{6}dS_x
                  -k\int_{\mathbf{B}_0}\left(\frac{K'(x)}{|x|}-K'(1)+K'(1)\right)x\cdot (x-r\mathbf{m}_0)U_0^6dx \biggr] \\
            &\quad =O(\mu)+O(1)k\int_{\mathbf{B}_0}\left(r\mathbf{m}_0\cdot (x-r\mathbf{m}_0)+O(\mu) \right)U_0^6dx=O\bigl(\mu^\frac{1}{2}\bigr).
            \end{aligned}
      \end{align*}
      Here we use the fact $U_0\leq C$ on $\partial \mathbf{B}_0$ and that $U_0$ is a function of $|x-r\mathbf{m}_0|$. Similarly, with the fact $|x-r\mathbf{m}_0|=\sigma_0 \mu^\frac{1}{2}$, $R=O\bigl(\mu^\frac{1}{2}\bigr)$, $U_0 \leq C$ on $\partial \mathbf{B}_0$ and $|\nabla R|=O\bigl(\mu^{-\frac{1}{2}}\bigr)$, $|x-r\mathbf{m}_0|\leq \varepsilon d_0$, in $\mathbf{B}_0$,
       we have
      \begin{align*}
            \begin{aligned}
                  &k\int_{\mathbf{B}_0} \left(K(x)U_0^\mu-K(r)\right)U_0^{5+\mu} (U_{0})_\sigma dx  \\
                 &\quad =O(\mu^\frac{1}{2}\left|\ln \mu\right|)-(1+a)\frac{1}{6}\int_{\mathbf{B}_0}(K(x)U_0^\mu-K(r))\mathbf{m}_0\cdot \nabla U_0^6dx  \\
                 &\quad =\frac{K'(1)}{6}\int_{\mathbb{R}^3}\Phi^6(y)dy+O \bigl(\mu^\frac{1}{2}\left|\ln \mu\right| \bigr)\\
                 &\quad =K'(1)A_1+O\bigl(\mu^\frac{1}{2}\left|\ln \mu\right|\bigr),
            \end{aligned}
      \end{align*}
      and
      \begin{align*}
            \begin{aligned}
                  &-(5+\mu)k \int_{\mathbf{B}_0}K(x)RU_0^{4+\mu}(U_{0})_\lambda dx                         \\
                  &\quad=\frac{k}{\lambda}\left(1+O\left(\mu^\frac{1}{2}\left|\ln \mu\right|\right)\right)\int_{\mathbf{B}_0}
                  R\left(\frac{1}{2}U_0^5-\divop\left((x-r\mathbf{m}_0)U_0^5\right) \right)dx  \\
                  &\quad=\frac{k}{2\lambda}\left(1+O\left(\mu^\frac{1}{2}\left|\ln \mu\right|\right)\right)\left[\int_{\mathbf{B}_0}\left(R(r\mathbf{m}_0)+O(1)\Vert\nabla R \Vert_{L^\infty}\varepsilon d_0 \right)U_0^5dx+O(\mu)\right] \\
                  &\quad=\frac{E(r\mathbf{m}_0)}{2\lambda^2}\int_{\mathbb{R}^3}\Phi(0)\Phi^5(y)dy+O\left(\mu \left|\ln \mu\right|\right)\\
                  &\quad=\frac{2A_2}{\lambda^2}E(r\mathbf{m}_0)+O\left(\mu^\frac{1}{2}\left|\ln \mu\right|\right).
            \end{aligned}
      \end{align*}
      Finally, using the fact $\mathbf{m}_0\cdot \nabla R = \mathbf{m}_0\cdot \nabla R(r\mathbf{m}_0)+O(d_0)$, we get
      \begin{align*}
            \begin{aligned}
                  & -(5  +\mu)k \int_{\mathbf{B}_0}K(x)RU_0^{4+\mu} (U_{0})_\sigma dx \\
                      & \quad =-\left[1+O\left(\mu^\frac{1}{2}\left|\ln \mu\right|\right)\right]\int_{\mathbf{B}_0}R\left(\frac{5}{4}\frac{K'(r)}{K(r)}U_0^5+\mathbf{m}_0\cdot\nabla U_0^5 \right)dx                          \\
                      & \quad =\left[1+O\left(\mu^\frac{1}{2}\left|\ln \mu\right|\right)\right]\int_{\mathbf{B}_0}\left(\nabla R\cdot \mathbf{m}_0-\frac{5}{4}\frac{K'(r)}{K(r)}R  \right)U_0^5dx+O\left(\mu^\frac{1}{2}\left|\ln \mu\right|\right) \\
                      & \quad =\mu^\frac{1}{2}\frac{\mathbf{m}_0\cdot \nabla E(r\mathbf{m}_0)}{\lambda^3}\int_{\mathbb{R}^3}\Phi(0)\Phi^5(y)dy+O\left(\mu^\frac{1}{2}\left|\ln \mu\right|\right).
            \end{aligned}
      \end{align*}
      In conclusion, we have
      \begin{align*}
            \begin{aligned}
                  \partial_\lambda J &=\frac{2A_1}{\lambda^2}E(r\mathbf{m}_0)+O\left(\mu^\frac{1}{2}\left|\ln \mu\right|\right),\\
                  \partial_\sigma J &=K'(1)A_1+\frac{4A_2\mu^\frac{1}{2}}{\lambda}\mathbf{m}_0\cdot \nabla E(r\mathbf{m}_0)+O\left(\mu^\frac{1}{2}\left|\ln \mu\right|\right).
            \end{aligned}
      \end{align*}
      Collecting above all and \eqref{eq.E_x_0_and_nabla_E_x_0}, we obtain the estimates of the first-order partial derivative.
      By the similar method, we obtain the remaining two estimates. Thus, we complete the assertion of the lemma.
\end{proof}

\end{document}